\newtheorem{theorem}{Theorem}[section]
\newtheorem{thmx}{Theorem}
\newtheorem{proposition}[theorem]{Proposition}
\newtheorem{corollary}[theorem]{Corollary}
\newtheorem{observation}[theorem]{Observation}
\newtheorem{lemma}[theorem]{Lemma}
\theoremstyle{definition}
\newtheorem{definition}[theorem]{Definition}
\newtheorem{notation}[theorem]{Notation}
\newtheorem*{thm*}{Theorem}
\newcounter{claimcounter}[section]
\newtheorem{claim}[claimcounter]{Claim}
\newtheorem*{claim*}{Claim}
\newtheorem{remark}[theorem]{Remark}
\newtheorem{example}[theorem]{Example}
\newcounter{que}
\newtheorem{question}[que]{Question}
\newcommand{\N}{\mathbb N}
\newcommand{\setsep}{:\;}
\newcommand\isomtrclass[2][]{\langle #2\rangle_{\equiv}^{#1}}
\newcommand\isomrfclass[2][]{\langle #2\rangle_{\simeq}^{#1}}
\newcommand\approximates[2][]{\stackrel{#2}{\sim} #1}
\newcommand{\Span}{\operatorname{span}}
\newcommand\closedSpan[1]{\overline{\Span} \{#1\}}
\def\Gurarii{\mathbb{G}}
\def\F{\mathcal{F}}
\def\B{\mathcal{B}}
\def\PP{\mathcal{P}}
\def\dist{\operatorname{dist}}
\newcommand{\Rat}{\mathbb{Q}}
\newcommand{\Rea}{\mathbb{R}}
\newcommand{\Nat}{\mathbb{N}}
\newcommand{\dom}{\operatorname{dom}}
\newcommand{\qeSpan}{\Span_\Rat}
\def\b{\mathfrak{b}}
\begin{document}
\normalem 

\title[Polish spaces of Banach spaces]{Polish spaces of Banach spaces}

\author[M. C\' uth]{Marek C\'uth}
\author[M. Dole\v{z}al]{Martin Dole\v{z}al}
\author[M. Doucha]{Michal Doucha}
\author[O. Kurka]{Ond\v{r}ej Kurka}
\email{cuth@karlin.mff.cuni.cz}
\email{dolezal@math.cas.cz}
\email{doucha@math.cas.cz}
\email{kurka.ondrej@seznam.cz}

\address[M.~C\' uth]{Charles University, Faculty of Mathematics and Physics, Department of Mathematical Analysis, Sokolovsk\'a 83, 186 75 Prague 8, Czech Republic}
\address[M.~Dole\v{z}al, M.~Doucha, O.~Kurka]{Institute of Mathematics of the Czech Academy of Sciences, \v{Z}itn\'a 25, 115 67 Prague 1, Czech Republic}

\subjclass[2020] {03E15, 54E52 (primary), 46B20,  46B80 (secondary)}

\keywords{Banach spaces, descriptive set theory, Baire category, finite representability of Banach spaces}
\thanks{M. C\'uth was supported by Charles University Research program No. UNCE/SCI/023 and by the Czech Science Foundation, project no. GA\v{C}R 17-00941S. Research of M. Dole\v zal was supported by the GA\v CR project 17-27844S and RVO: 67985840. M. Doucha was supported by the GA\v{C}R project 19-05271Y and RVO: 67985840.
 O. Kurka was supported by the Czech Science Foundation, project no. GA\v{C}R 17-00941S and RVO: 67985840.
}

\begin{abstract}
   We present and thoroughly study natural Polish spaces of separable Banach spaces. These spaces are defined as spaces of norms, resp. pseudonorms, on the countable infinite-dimensional rational vector space. We provide an exhaustive comparison of these spaces with admissible topologies recently introduced by Godefroy and Saint-Raymond and show that Borel complexities differ little with respect to these two different topological approaches.
   
   We investigate generic properties in these spaces and compare them with those in admissible topologies, confirming the suspicion of Godefroy and Saint-Raymond that they depend on the choice of the admissible topology.
\end{abstract}
\maketitle

\section*{Introduction}
Banach spaces and descriptive set theory have a long history of mutual interactions. An explicit use of descriptive set theory to Banach space theory can be traced back at least to the seminal papers of Bourgain (\cite{Bourgain1}, \cite{Bourgain2}), where it has become apparent that descriptive set theory is an indispensable tool for universality problems. That is a theme that has been investigated by researchers working with Banach spaces ever since (see e.g. \cite{ArgDod} and \cite{dodosKniha} and references therein).

As it eventually turned out, `Descriptive set theory of Banach spaces' is an interesting and rich subject of its own and it has received some considerable attention in the recent years. One of the starting points was the idea of Bossard of coding separable Banach spaces in \cite{Bossard93}, \cite{Bo02}. His approach was, which can be considered standard, to choose some universal separable Banach space $X$, e.g. $C(2^\Nat)$, and consider the Effros-Borel space $F(X)$. Recall that this is the set of closed subsets of $X$ equipped with a certain $\sigma$-algebra which makes $F(X)$ a standard Borel space, i.e. a measurable space which is isomorphic, as a measurable space, to a Polish space equipped with the $\sigma$-algebra of Borel sets. It is then not too difficult to show that the subset $SB(X)\subseteq F(X)$ consisting of all closed linear subspaces is a Borel subset, and therefore a standard Borel space itself.

Although this approach has found numerous significant applications in Banach space theory, its drawback is that there is no canonical or natural (Polish) topology on $SB(X)$. So although one can ask whether a given class of Banach spaces is Borel or not, the question about the exact complexity of that particular class is meaningless. Let us specify this. Many of the applications may be interpreted as computing co/analytic sets of Banach spaces and deriving consequences from this; this concerns especially various universality results, see e.g. \cite[Chapter 7]{dodosKniha} and \cite{Ku16}. Having a topology allows us to separate two classes of Banach spaces which are both known to be Borel (we comment more on this issue in the sequel \cite{CDDK21}).

Moreover, one of the  active and ongoing research streams is to find out whether for a particular Banach space its isomorphism class is Borel or not (see e.g. \cite{K19}, \cite{Godef17}, or the survey \cite{G17} and references therein): spaces with Borel isomorphism classes are rare and can be considered simply definable (up to isomorphism). It is then desirable, for spaces whose classes are Borel, to have a finer description how simply definable they are, see e.g. \cite[Problem 3]{Godef17}.

A recent work \cite{GS18} of Godefroy and Saint-Raymond addresses this general issue of associating a natural topology to the set of codes of Banach spaces. They still work with the space $SB(X)$, but among the many Polish topologies on $SB(X)$ giving the Effros-Borel structure, they select some particular subclass which is called \emph{admissible topologies}. Although no particular admissible topology is canonical, the set of requirements put on this class guarantees that the exact Borel complexities vary little.

This paper presents an alternative approach by considering a concrete and natural Polish space (and some variants of it) of separable Banach spaces, which is convenient to work with. 
We have three main reasons and advantages for that in mind. The first one, which was our original motivation, is to push much further the programme initiated by Godefroy and Saint-Raymond on computing precise Borel complexities of various classes of Banach spaces. It turns out that in our new space the computations of Borel complexities are usually as straightforward as they could be, and besides many new results we are also able to improve several estimates already obtained in \cite{GS18} (see also \cite{Gh19} for additional results in this direction). Most of these results are contained in the sequel to this paper \cite{CDDK21}. It should be also mentioned that computing exact Borel complexities has been a traditional research topic in analysis and topology of independent interest (see \cite[Chapter 23]{KechrisBook} for a comprehensive but already outdated list of examples)
and one of our contributions in that regard, presented in \cite{CDDK21}, are new elegant characterizations of the Hilbert space $\ell_2$. Shortly, $\ell_2$ is the unique (up to isometry) infinite-dimensional separable Banach space with a  closed isometry class, and it is the unique (up to isomorphism) infinite-dimensional separable Banach space with an $F_{\sigma}$ isomorphism class. Recall that Bossard \cite[Problem 2.9]{Bo02} originally asked whether $\ell_2$ is the unique space with a Borel isomorphism class. Although this is known to be false now, we can show that no other Banach space can have such a simple isomorphism class.

The remaining two reasons have two different origins, one is functional analytic, the other is coming from logic. For the former, the feature of the topology we work with is that basic open sets are essentially definitions of finite-dimensional Banach spaces up to $\varepsilon$-isomorpism, where $\varepsilon>0$ is arbitrarily small. This connects this topological approach with the local theory of Banach spaces. The basic manifestation of this is that finite representability of Banach spaces is expressed in elementary topological terms and for example leads to a natural reformulation of the Dvoretzky theorem that the infinite-dimensional separable Hilbert space is contained in the closure of the isometry class of every infinite-dimensional separable Banach space (see Corollary~\ref{cor:Dvoretzky}). Further applications to the local theory might be a subject of future research.

Lastly, our approach brings closer the two different interactions of logic with Banach space theory that had not interacted significantly. That is, the descriptive set theory of Banach spaces and the continuous model theory of Banach spaces. Continuous model theory, or model theory of metric structures, is a generalization of the classical model theory to structures that are inherently metric, and has its origins, motivations as well as most of the applications in Banach space theory. We refer the reader to \cite{BYBHU} for an introduction and more motivation. Our space of Banach spaces is closely related to how countable, resp. separable models are coded in classical, resp. continuous model theory (see e.g. \cite[Section 3.6]{Gao} for the classical case and \cite[Section 4]{BYDNT} for the metric case). Moreover, the exact Borel complexities that our space allows to compute are directly related to L\'opez-Escobar theorem from continuous infinitary logic which connects such complexities with the complexities of formulas that define the corresponding classes (we refer to \cite[Section 6]{BYDNT}). It may be also of interest for a future research to investigate the relation, for a given Banach space, between having an isometry class of low Borel complexity and having an axiomatization in continuous first-order logic.

Having motivated our approach, let us now outline some more details and the main results contained in this paper. Informally, the space we introduce is the space of all norms, resp. pseudonorms, on the space of all finitely supported sequences of rational numbers -- the unique infinite-dimensional vector space over $\Rat$ with a countable Hamel basis. This is also, in spirit, similar to how (for instance) Vershik topologized the space of all Polish metric spaces (\cite{Ver}), or how Grigorchuk topologized the space of all $n$-generated, resp. finitely generated, groups (\cite{Gri}).

This space has already appeared in previous works of the authors in \cite{CDK21a} and \cite{CDK21b} as a useful coding of Banach spaces. Here we investigate it further.

Some of the main results of this paper are listed now. The first theorem below presents the main part of the comparison of the space of norms with admissible topologies, whose proof is the core of Section~\ref{sect:choice}, where also all other comparison results are proved.

\begin{thmx}\label{thm:Intro1}
There is a $\boldsymbol{\Sigma}_2^0$-measurable mapping from the space of norms to any admissible topology of Godefroy and Saint-Raymond that associates to each norm a space isometric to the space which the norm defines, and vice versa.
\end{thmx}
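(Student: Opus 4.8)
The plan is to produce two maps and control their descriptive complexity: a \emph{coding} map $\Phi$ from the space of norms $\NN$ (regarded as a subspace of $[0,\infty)^{V}$, where $V$ is the $\Rat$-vector space of finitely supported rational sequences with canonical basis $(e_i)$, and where $X_\nu$, the completion of $(V,\nu)$, is the space defined by $\nu\in\NN$) into $SB\bigl(C(2^{\Nat})\bigr)$ with a fixed admissible topology, and a \emph{decoding} map $\Psi$ in the other direction, each sending a space to a linearly isometric copy of itself. By the comparison results of \cite{GS18} (the identity between any two admissible topologies is $\boldsymbol{\Sigma}^0_2$-measurable both ways) it suffices to treat one admissible topology, for which we take a Wijsman topology attached to a fixed dense sequence $(u_k)$ of $C(2^{\Nat})$; the general case then follows by composition.

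\emph{The map $\Phi$.} For $\nu\in\NN$, a functional of norm at most $1$ on $X_\nu$ is exactly a linear $\varphi\colon V\to\Rea$ with $|\varphi(x)|\le\nu(x)$ for all $x$, and writing $\varphi(e_i)=\nu(e_i)t_i$ identifies the dual ball of $X_\nu$ with the convex symmetric weak$^*$-compact set
\[
K_\nu=\Bigl\{t\in Q:\ \bigl|\textstyle\sum_i q_i\nu(e_i)t_i\bigr|\le\nu\bigl(\textstyle\sum_i q_ie_i\bigr)\text{ for all }(q_i)\in V\Bigr\}\subseteq Q:=[-1,1]^{\Nat}.
\]
By Hahn--Banach the evaluation map $\sum_i q_ie_i\mapsto\bigl(t\mapsto\sum_i q_i\nu(e_i)t_i\bigr)$ is a linear isometry of $(V,\nu)$ into $C(K_\nu)$, extending to a linear isometric embedding $X_\nu\hookrightarrow C(K_\nu)$. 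The crucial observation is that $\nu\mapsto K_\nu$ is \emph{continuous} from $\NN$ into the hyperspace of compact subsets of $Q$: upper semicontinuity is immediate (the graph is closed, being an intersection over $V$ of closed conditions), and lower semicontinuity follows since the finite-dimensional truncations $K_\nu^{(m)}$ (dual balls of $\Span\{e_1,\dots,e_m\}$) depend continuously on $\nu$, any element of $K_{\nu'}^{(m)}$ extends by Hahn--Banach to an element of $K_{\nu'}$, and coordinates of index $>m$ are negligible for the topology of $Q$. Now put a Hilbert space structure $\langle x,y\rangle=\sum_i 4^{-i}x_iy_i$ on $\Rea^{\Nat}\supseteq Q$ (compatible with the product topology); each $K_\nu$ being closed and convex, the nearest-point projection $P_{K_\nu}\colon Q\to K_\nu$ is a continuous retraction, and $\nu\mapsto P_{K_\nu}$ is continuous into $C(Q,Q)$ because nearest-point projections onto convex subsets of a Hilbert space depend continuously (uniformly on bounded sets) on the set. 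Fix once and for all a continuous surjection $p\colon 2^{\Nat}\twoheadrightarrow Q$ and set $g_i^\nu(s):=\nu(e_i)\bigl(P_{K_\nu}(p(s))\bigr)_i$; then $\nu\mapsto(g_i^\nu)_i$ is continuous from $\NN$ into $C(2^{\Nat})^{\Nat}$ and $\bigl\|\sum_i q_ig_i^\nu\bigr\|_{C(2^{\Nat})}=\sup_{t\in K_\nu}\bigl|\sum_i q_i\nu(e_i)t_i\bigr|=\nu\bigl(\sum_i q_ie_i\bigr)$, so $\Phi(\nu):=\closedSpan{g_i^\nu:i\in\Nat}$ is linearly isometric to $X_\nu$. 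Finally, $(x_i)\mapsto\closedSpan{x_i:i}$ is $\boldsymbol{\Sigma}^0_2$-measurable from $C(2^{\Nat})^{\Nat}$ into the Wijsman topology — for each $k$, $\dist(u_k,\closedSpan{x_i:i})$ is an infimum over finite rational combinations of the continuous functions $(x_i)\mapsto\|u_k-\sum_i q_ix_i\|$, hence upper semicontinuous, so preimages of $\{F:\dist(u_k,F)<r\}$ are open and preimages of $\{F:\dist(u_k,F)>r\}$ are $F_\sigma$ — and therefore $\Phi$ is $\boldsymbol{\Sigma}^0_2$-measurable.

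\emph{The map $\Psi$.} The admissibility axioms provide (or, via the Kuratowski--Ryll-Nardzewski selection theorem, readily imply) continuous maps $d_n\colon SB\bigl(C(2^{\Nat})\bigr)\to C(2^{\Nat})$ with $\{d_n(F):n\}$ dense in $F$. Then $\rho_F\bigl(\sum_i q_ie_i\bigr):=\bigl\|\sum_i q_id_i(F)\bigr\|$ defines a pseudonorm on $V$ with $F\mapsto\rho_F$ continuous and with completion linearly isometric to $\closedSpan{d_n(F):n}=F$. To land in the space of \emph{norms} — which on this side restricts attention to infinite-dimensional $F$, the finite-dimensional case being dealt with separately — one thins $(d_n(F))$ to a linearly independent, still dense, subsequence, keeping $d_n(F)$ precisely when its distance to the span of the previously kept vectors is positive; since that distance is an upper semicontinuous function of $F$, this condition is given by $F_\sigma$ sets, and a routine induction keeps $\Psi$ within $\boldsymbol{\Sigma}^0_2$.

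\emph{Where the difficulty lies.} The one genuinely delicate point is the continuity — especially the lower semicontinuity — of $\nu\mapsto K_\nu$: it is tempting to believe that the dual ball of $X_\nu$ jumps as $\nu$ varies, but the normalization by the $\nu(e_i)$ together with the negligibility of far-out coordinates in the Hilbert-cube metric rules this out, and this is what makes the rest (the continuous choice of surjections $P_{K_\nu}\circ p\colon 2^{\Nat}\twoheadrightarrow K_\nu$, hence the $\boldsymbol{\Sigma}^0_2$-measurability of $\Phi$) go through smoothly. For $\Psi$ the only care required is the bookkeeping of the thinning step so as not to leave the class $\boldsymbol{\Sigma}^0_2$. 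Having handled one admissible topology, the passage to all admissible topologies is immediate from \cite{GS18}.
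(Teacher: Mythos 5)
Your construction of the forward map (norms into $SB(C(2^{\Nat}))$) is, as far as I can check, correct, and it takes a genuinely different and considerably more elementary route than ours. We obtain the continuous selectors $\chi_k:\B\to X$ of Proposition~\ref{propspojvnor} abstractly, via the equivalence of Proposition~\ref{propspojvnorcharakt} and the recursive construction of uniformly controlled Hahn--Banach extensions in Lemma~\ref{lemmspojvnor}; you get them concretely by realizing $B_{X_\nu^*}$ as the symmetric convex compact set $K_\nu\subseteq[-1,1]^{\Nat}$ (for norms this identification needs only the basis vectors, not a dense sequence as in our Lemma~\ref{lem:slozitost_koule}), checking Vietoris continuity of $\nu\mapsto K_\nu$ (upper semicontinuity from the closed graph, lower semicontinuity from Hahn--Banach extension of slightly rescaled restrictions to $\Span\{e_1,\dots,e_m\}$, exactly as in our Lemma~\ref{lem:slozitost_koule} but simpler because $\nu$ is a norm), and then producing a continuous-in-$\nu$ family of surjections $2^{\Nat}\to K_\nu$ via a fixed surjection onto the Hilbert cube followed by the metric projection onto $K_\nu$ for a compatible Hilbert-space metric. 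The standard estimate $\|P_Kx-P_{K'}x\|\le h+2\sqrt{h\,(\dist(x,K)+h)}$ with $h=d_H(K,K')$ gives continuity of $\nu\mapsto P_{K_\nu}$ uniformly on $Q$, and since for fixed $i$ the $i$-th coordinate is Lipschitz for the weighted metric, each $g_i^\nu$ depends continuously on $\nu$ in the supremum norm; the norm identity is then immediate from surjectivity of $P_{K_\nu}\circ p$ onto $K_\nu$. In effect the metric projection hands you, for free, the family $\Gamma(\mu)(e_k)=\mu(e_k)\bigl(P_{K_\mu}(t)\bigr)_k$ verifying condition (2) of Proposition~\ref{propspojvnorcharakt} with $\alpha_k(\mu,\lambda)=\sup_{t\in Q}\bigl|\mu(e_k)(P_{K_\mu}(t))_k-\lambda(e_k)(P_{K_\lambda}(t))_k\bigr|$, bypassing Lemma~\ref{lemmspojvnor} entirely. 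What our route buys is an intrinsic if-and-only-if criterion valid for arbitrary subsets of $\PP$; what yours buys is brevity and a concrete embedding. Your reverse map is essentially our Theorem~\ref{thm:reductionFromSBToP} composed with (the argument of) Proposition~\ref{prop:reductionFromPInftyToB}, and is fine, provided the thinning step records, as we do, that the relevant preimages are $\boldsymbol{\Delta}_2^0$ and not merely $F_\sigma$.

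There is, however, one genuine gap in the write-up: the reduction to a single admissible topology ``by composition'' does not work as stated, because the composition of two $\boldsymbol{\Sigma}_2^0$-measurable maps is in general only $\boldsymbol{\Sigma}_3^0$-measurable, and moreover the target of the theorem is $SB(X)$ for an \emph{arbitrary} universal $X$, which a comparison of topologies on one fixed $SB(C(2^{\Nat}))$ does not cover. Fortunately your construction already contains the repair: fix an isometric embedding $j:C(2^{\Nat})\hookrightarrow X$ and set $\Phi(\nu)=\closedSpan{j(g_i^\nu)\setsep i\in\Nat}$. Since the $\nu\mapsto j(g_i^\nu)$ are continuous, $\Phi^{-1}(E^+(U))$ is \emph{open} for every open $U\subseteq X$ (the closed span meets $U$ iff some finite rational combination does), and then the defining property of admissible subbases — every subbasic set is a countable union of sets $E^+(U)\setminus E^+(V)$ — yields $\boldsymbol{\Sigma}_2^0$-measurability of $\Phi$ with respect to \emph{every} admissible topology at once; this is exactly our Lemmas~\ref{lem:condForSigmaDvaMeasurable} and~\ref{lem:sufficientForReduction}. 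With that replacement for the final paragraph of your argument for $\Phi$, the proof is complete.
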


Therefore, while the exact Borel complexities are more or less independent of our coding or the choice of the admissible topology, some finer topological properties, such as being meager or comeager (this is mentioned below), or the description of the topological closures, are not. We obtain a neat characterization of topological closures in the spaces of norms and pseudonorms in terms of finite representability, we refer the reader to Proposition~\ref{prop:closureOfIsometryClasses}.

Then, directly motivated by \cite[Problem 5.5]{GS18}, we investigate the generic properties in the spaces of norms and in admissible topologies. First we show the genericity of the Gurari\u{\i} space in the space of norms and pseudonorms.

\begin{thmx}\label{thm:Intro4}
The isometry class of the Gurari\u{\i} space is a dense $G_\delta$-set in the space of norms and pseudonorms, i.e. the Gurari\u{\i} space is the generic separable Banach space (see Theorem~\ref{thm:gurariiTypicalInP}).
\end{thmx}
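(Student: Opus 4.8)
The plan is to leverage two facts that, by this point in the paper, are already available, so that the proof becomes a short deduction. First, by Corollary~\ref{cor:gurariiComplexity} the isometry class $\isomtrclass{\Gurarii}$ of the Gurari\u{\i} space is $G_\delta$, both in the space of norms and in the space of pseudonorms. Hence the whole content of the theorem reduces to proving that $\isomtrclass{\Gurarii}$ is \emph{dense}: a dense $G_\delta$ subset of a space is comeager, and since the spaces of norms and pseudonorms are Polish, and hence Baire, this is exactly the statement that the generic (pseudo)norm on the rational vector space defines a Banach space isometric to $\Gurarii$.

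For the density, I would argue through the description of topological closures. Write $X_\mu$ for the Banach space determined by a (pseudo)norm $\mu$. The feature that singles out $\Gurarii$ here is that \emph{every} Banach space is finitely representable in $\Gurarii$: this is immediate from the defining approximate extension property, since applying it to the trivial embedding $\{0\}\hookrightarrow\Gurarii$ shows that every finite-dimensional normed space $(1+\varepsilon)$-embeds into $\Gurarii$ for every $\varepsilon>0$ (and in fact isometric copies of all finite-dimensional spaces live in $\Gurarii$). By Proposition~\ref{prop:closureOfIsometryClasses}, the closure of $\isomtrclass{X}$ in the space of norms (resp. pseudonorms) is exactly $\{\mu : X_\mu\text{ is finitely representable in }X\}$; with $X=\Gurarii$ this is the \emph{entire} space. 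Thus $\isomtrclass{\Gurarii}$ is dense, and, combined with the previous paragraph, dense $G_\delta$, which proves the theorem.

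The real work --- and where I expect the main obstacle to be --- is therefore packaged into the two results invoked above, both of which rest on the Fra\"{\i}ss\'{e}-type construction of $\Gurarii$. The mechanism is: given a norm prescribed on a finite-dimensional coordinate subspace $W_0$ of the rational vector space, one recursively produces an increasing chain $W_0\subseteq W_1\subseteq\cdots$ of finite-dimensional rational subspaces whose union is the whole space, together with compatible norms $\mu_j$ on $W_j$, where at each stage one amalgamates $(W_j,\mu_j)$ with a finite-dimensional space drawn from a fixed countable family that is dense among all finite-dimensional normed spaces, along an almost-isometric embedding --- realizing the amalgam inside the ambient space by adjoining finitely many fresh basis vectors (this uses the amalgamation property of the class of finite-dimensional Banach spaces). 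The completion of $\bigcup_j (W_j,\mu_j)$ then has the extension property, and hence is isometric to $\Gurarii$, provided every approximate extension task over it is eventually met; arranging this is the delicate part, since the enumeration of tasks must be set up so as to ``catch up'' with the ever-growing subspaces $W_j$ and handle all pairs of finite-dimensional spaces together with all almost-isometric embeddings into $\bigcup_j W_j$. This book-keeping diagonalization is the technical heart of the matter; once it is carried out, the present statement follows by the two-line deduction above.
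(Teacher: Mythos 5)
Your density argument is exactly the paper's: every separable Banach space is finitely representable in $\Gurarii$ (it is isometrically universal), so by Proposition~\ref{prop:closureOfIsometryClasses} the closure of $\isomtrclass{\Gurarii}$ is the whole space. That half is correct.

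The gap is in the $G_\delta$ half. Citing Corollary~\ref{cor:gurariiComplexity} is circular: that corollary is deduced \emph{from} Theorem~\ref{thm:gurariiTypicalInP} (its proof opens with ``By Theorem~\ref{thm:gurariiTypicalInP}, the isometry class of $\mathbb{G}$ is $G_\delta$''), so it cannot be used to establish the present statement. More importantly, the ``real work'' you then sketch --- the recursive amalgamation producing a chain $W_0\subseteq W_1\subseteq\cdots$ of rational subspaces whose union completes to $\Gurarii$ --- is an argument for \emph{density} (or for the mere existence of $\Gurarii$), not for the Borel complexity of its isometry class: knowing you can plant one norm with $X_\mu\equiv\Gurarii$ in every basic open set says nothing about whether $\{\mu:X_\mu\equiv\Gurarii\}$ is a countable intersection of open sets. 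What the paper actually has to do (Proposition~\ref{prop:characterization} and Theorem~\ref{thm:gurariiGDelta}) is recast the approximate extension property of Definition~\ref{def:Gurarii} as a countable conjunction, indexed by tuples $(n,n',P,P',g)$ of rational finite data, of conditions each of which is the union of a closed and an open subset of $\PP_\infty$; the forward implication rests on the almost-homogeneity of $\Gurarii$ (the Kubi\'s--Solecki lemma) and the converse on a delicate quantitative perturbation argument with the functions $\phi_1,\phi_2$. None of that is supplied by, or follows from, the diagonalization you describe; the book-keeping you identify as ``the technical heart'' would only re-prove density. (This is precisely why the paper warns that the proof is ``far more involved than analogous results for countable Fra\"iss\'e limits,'' where the extension property is literally a countable conjunction of open conditions.)
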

Then we continue with a similar investigation for admissible topologies and among other things, we show that the Gurari\u{\i} space is not always generic.
\begin{thmx}\label{thm:Intro5}
For any isometrically universal separable Banach space $X$, any admissible topology $\tau$ on $SB(X)$ and any infinite-dimensional Banach spaces $Y$ and $Z$ such that $Y\hookrightarrow Z$ and $Z\not\hookrightarrow Y\oplus F$ for every finite-dimensional space $F$, there exists a finer admissible topology $\tau'\supseteq \tau$ such that the class of Banach spaces isomorphic to $Z$ is nowhere dense in $(SB(X),\tau')$. In particular, there exists an admissible topology in which the Gurari\u{\i} space is meager (see Theorem~\ref{thm:Baireadmiss2}).
\medskip

On the other hand, the isometry class of the Gurari\u{\i} space $\mathbb{G}$, as a subset of $SB(\mathbb{G})$, is a dense $G_\delta$-set in the Wijsman topology (see Theorem~\ref{thm:GurariiinWijsman}).
\end{thmx}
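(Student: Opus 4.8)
The plan splits the statement into its two halves, and each of those into ``$G_\delta$'' and ``dense''.

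\emph{The second assertion.} Recall that the Wijsman topology $\tau_W$ on $SB(X)$ associated with the norm of $X$ is admissible -- indeed it is the coarsest admissible topology, being generated by precisely the maps $F\mapsto d(x,F)$ that admissibility forces to be continuous. That $\isomtrclass{\Gurarii}$ is $G_\delta$ in $(SB(X),\tau_W)$ follows by expressing linear isometry to $\Gurarii$ through a fixed countable family of conditions built from these continuous distance functionals -- the universality clause (``for every finite-dimensional $E$ and every $\varepsilon>0$, $F$ contains a $(1+\varepsilon)$-copy of $E$'', an open condition once one observes that ``$F$ almost contains a prescribed finite-dimensional space'' is $\tau_W$-open) together with the approximate-homogeneity clause -- exactly as in the analysis of the space of norms behind Theorem~\ref{thm:gurariiTypicalInP}. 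For $\tau_W$-density it is enough to check that every $F\in SB(X)$ lies in the $\tau_W$-closure of $\isomtrclass{\Gurarii}$. One way is the finite-representability description of closures (Proposition~\ref{prop:closureOfIsometryClasses} and its Wijsman analogue) together with the fact that \emph{every} Banach space is finitely representable in $\Gurarii$ (since $\Gurarii$ contains isometric copies of all finite-dimensional spaces). Concretely: given a nonempty basic $\tau_W$-open $V=\{F: |d(x_i,F)-r_i|<\varepsilon,\ i\le k\}$ and $F_0\in V$, exhaust $F_0$ by finite-dimensional subspaces and pick $E\subseteq F_0$ finite-dimensional with $|d(x_i,E)-r_i|<\varepsilon/2$ for all $i$; then build an isometric copy $\Gurarii'\subseteq X$ of $\Gurarii$ containing $E$ by realising inside $X$ an increasing chain $E=E_0\subseteq E_1\subseteq\cdots$ whose union is linearly isometric to $\Gurarii$, through isometric embeddings $j_n\colon E_n\to X$ with $j_0=\Id_E$ and $\|j_{n+1}|_{E_n}-j_n\|$ summable, each one-dimensional extension step performed ``far from'' $\{x_1,\dots,x_k\}$ so that $d(x_i,j_n(E_n))$ never drops below $d(x_i,E)-\varepsilon/2$; then $d(x_i,\Gurarii')\in[d(x_i,E)-\varepsilon/2,\,d(x_i,E)]$, so $\Gurarii'\in V$. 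Hence $\isomtrclass{\Gurarii}$ is comeager in $(SB(X),\tau_W)$.

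\emph{The first assertion.} The tool is to refine $\tau$ by declaring countably many of its Borel sets open: standard change-of-topology (adjoining further Borel sets if needed to restore complete metrisability) keeps the space Polish with the same Effros--Borel structure, and admissibility -- which, granted that structure, only asks that the maps $F\mapsto d(x,F)$ stay continuous -- is inherited by any finer topology, so the resulting $\tau'\supseteq\tau$ is admissible; it then suffices to choose the adjoined sets so that $\mathcal C_Z:=\{F: F\simeq Z\}$ becomes nowhere dense. The prototype is the refinement of $\tau_W$ making each $\{F:\dim F\le n\}$ clopen: then $\{F:\dim F=\infty\}$ becomes closed, and it is nowhere dense because no nonempty $\tau'$-open set is contained in it -- any $\tau_W$-neighbourhood of an infinite-dimensional space meets $\{F:N<\dim F<\infty\}$ for every $N$; thus every infinite-dimensional isometry class, $\isomtrclass{\Gurarii}$ included, is nowhere dense in this $\tau'$, which settles the ``in particular'' clause (take $Z=\Gurarii$, $Y=\ell_2$: $\ell_2\hookrightarrow\Gurarii$ while $\Gurarii\not\hookrightarrow\ell_2\cong\ell_2\oplus F$). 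For an arbitrary admissible $\tau$ the same scheme has to be pushed through the regions where $\tau$ already separates infinite-dimensional spaces, and this is where the hypotheses enter: from $Y\hookrightarrow Z$ and $Z\not\hookrightarrow Y\oplus F$ for every finite-dimensional $F$ (whence $Z/j(Y)$ is infinite-dimensional for every subcopy $j(Y)$ of $Y$ inside $Z$) one argues that inside every basic $\tau$-open set one can, in a Borel fashion, replace a space by a nearby one embedding into $Y\oplus(\text{finite-dimensional})$, hence not isomorphic to $Z$, and then one adjoins a Borel family of such ``small'' spaces rich enough to make $\mathcal C_Z$ nowhere dense in $\tau'$.

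\emph{Main obstacle.} Two steps carry the real weight. For the Wijsman density it is the controlled amalgamation lemma: extending finite-dimensional isometric embeddings into the universal space $X$ one dimension at a time while keeping the new directions at controlled distance from a prescribed finite set of points -- precisely where the concrete geometry of $X$ is used. For the refinement it is converting the qualitative relation ``$Z\not\hookrightarrow Y\oplus F$'' into something robust enough to manufacture, inside an arbitrary basic open set of an arbitrary admissible topology, a \emph{Borel} family of spaces provably not isomorphic to $Z$ that is ample enough to make the copies of $Z$ \emph{nowhere dense} -- not merely meager -- while keeping the topology admissible.
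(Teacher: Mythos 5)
Both halves of your proposal contain genuine gaps.

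\emph{First assertion.} Your scheme rests on the claim that admissibility ``is inherited by any finer topology'' with the same Borel structure, and this is false: condition (ii) of admissibility demands a subbasis consisting of countable unions of sets $E^+(U)\setminus E^+(V)$, which severely restricts what may be adjoined. The paper adjoins only sets of the form $E^-(F)=SB(X)\setminus E^+(X\setminus F)$ for closed subspaces $F$, precisely because these have the required shape. Your prototype (making $\{F:\dim F\le n\}$ clopen) is not of this shape for $n\ge 1$ (any set $E^+(U)\setminus E^+(V)$ consisting only of spaces of dimension $\le n$ and containing a given $n$-dimensional $G$ is forced, by scale-invariance of subspaces, to consist of subspaces of $G$ alone, so a countable union of such sets cannot exhaust $\{F:\dim F\le n\}$), and in any case it cannot settle the ``in particular'' clause: Theorem~\ref{thm:Baireadmiss2} asserts nowhere-density in $SB_\infty(X)$, where there are no finite-dimensional spaces to densify. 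The argument that works, and that your general-case sketch only gestures at, is: inside each basic $\tau$-open $U_n$ containing a nonempty set $\bigcap_k E^+(V_k^n)\setminus E^+(W_n)$, build $F_n=\overline{\Span}(\{v_1,\dots,v_{N_n}\}\cup H_1)$ with $v_k\in V_k^n$ and $H_1\simeq Y$ sitting inside a copy of $Z$ (if there is one); then $F_n\simeq Y\oplus(\textnormal{finite-dimensional})$, so the hypothesis gives $Z\not\hookrightarrow F_n$, whence the \emph{entire closed set} $E^-(F_n)$ --- not merely the single point $F_n$ --- is disjoint from $\isomrfclass{Z}$; adjoining the countably many sets $E^-(F_n)$ keeps the topology admissible and the nonempty open sets $U_n\cap E^-(F_n)$ witness nowhere-density. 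Producing spaces that are ``not isomorphic to $Z$'', as you propose, is not enough; you need \emph{open sets} disjoint from $\isomrfclass{Z}$, and that no subspace of $F_n$ is isomorphic to $Z$ is exactly what the hypothesis $Z\not\hookrightarrow Y\oplus F$ buys.

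\emph{Second assertion.} For the $G_\delta$ part you overlook the one-line route: pull back the $G_\delta$ set $\isomtrclass[\PP]{\Gurarii}$ of Theorem~\ref{thm:gurariiTypicalInP} under the continuous reduction $(SB(\Gurarii),\tau_W)\to\PP$ of Theorem~\ref{thm:reductionFromSBToP}; re-deriving it directly in $\tau_W$ would mean redoing Proposition~\ref{prop:characterization}, whose homogeneity clause is emphatically not an open condition, so your claim that the characterization consists of open conditions does not stand as written. For density, the step you yourself label the ``main obstacle'' is the entire proof and remains unresolved. The theorem concerns $SB(\Gurarii)$: the paper forms the amalgamated sum $(\Gurarii\oplus_1 Y)/\{(z,-z):z\in X_0\}$ of $\Gurarii$ and $Y=\Span(X_0\cup\{x_1,\dots,x_n\})$ over the subspace $X_0$ defining the basic open set --- a push-out which preserves each $\dist(x_i,\cdot)$ exactly --- and then invokes the almost-homogeneity of $\Gurarii$ to move the resulting copy of $\Gurarii$ so that the points $x_i$ are approximately fixed. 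In a general isometrically universal $X$ your chain $j_0\subseteq j_1\subseteq\cdots$ cannot even be continued: extending an isometric embedding of $E_n$ to an almost-isometric embedding of a one-dimensional extension over (approximately) the identity is the defining property of almost universal disposition, i.e., of $\Gurarii$ itself, and the lower bound on $\dist(x_i,j_n(E_n))$ is precisely the push-out computation you omit. As written, the density argument is therefore incomplete even for the correct ambient space.
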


\subsection{Notation}

Let us conclude the introduction by setting up some notation that will be used throughout the paper.\\

Throughout the paper we usually denote the Borel classes of low complexity by the traditional notation such as $F_\sigma$ and $G_\delta$, or even $F_{\sigma \delta}$ (countable intersection of $F_\sigma$ sets) and $G_{\delta \sigma}$ (countable union of $G_\delta$-sets). However, whenever it is more convenient or necessary we use the notation $\boldsymbol{\Sigma}_\alpha^0$, resp. $\boldsymbol{\Pi}_\alpha^0$, where $\alpha<\omega_1$ (we refer to \cite[Section 11]{KechrisBook} for this notation). We emphasize that open sets, resp. closed sets, are $\boldsymbol{\Sigma}^0_1$, resp. $\boldsymbol{\Pi}^0_1$, by this notation.

Moreover, given a class $\boldsymbol{\Gamma}$ of sets in metrizable spaces, we say that $f:X\to Y$ is $\boldsymbol{\Gamma}$-measurable if $f^{-1}(U)\in \boldsymbol{\Gamma}$ for every open set $U\subseteq Y$.

Given Banach spaces $X$ and $Y$, we denote by $X\equiv Y$ (resp. $X\simeq Y$) the fact that those two spaces are linearly isometric (resp. isomorphic). We denote by $X\hookrightarrow Y$ the fact that $Y$ contains a subspace isomorphic to $X$. For $K\geq 1$, a \emph{$K$-isomorphism} $T:X\to Y$ is a linear map with $K^{-1}\|x\|\leq\|Tx\|\leq K\|x\|$, $x\in X$. If $x_1,\ldots,x_n$ are linearly independent elements of $X$ and $y_1,\ldots,y_n\in Y$, we write $(Y,y_1,\ldots,y_n)\approximates[(X,x_1,\ldots,x_n)]{K}$ if the linear operator $T:\Span\{x_1,\ldots,x_n\}\to\Span\{y_1,\ldots,y_n\}$ sending $x_i$ to $y_i$ satisfies $\max\{\|T\|,\|T^{-1}\|\}<K$. If $X$ has a canonical basis $(x_1,\ldots,x_n)$ which is clear from the context, we just write $(Y,y_1,\ldots,y_n)\approximates[X]{K}$ instead of $(Y,y_1,\ldots,y_n)\approximates[(X,x_1,\ldots,x_n)]{K}$. Morevoer, if $Y$ is clear from the context, we write $(y_1,\ldots,y_n)\approximates[X]{K}$ instead of $(Y,y_1,\ldots,y_n)\approximates[X]{K}$.

Throughout the text $\ell_p^n$ denotes the $n$-dimensional $\ell_p$-space, i.e. the upper index denotes dimension. Finally, in order to avoid any confusion, we emphasize that if we write that a mapping is an ``isometry'' or an ``isomorphism'', we do not mean it is surjective if this is not explicitly mentioned.

\section{The Polish spaces \texorpdfstring{$\PP_\infty$}{Pinfty} and \texorpdfstring{$\B$}{B}, their basic properties}\label{sect:newSpaces}

In this section we introduce the main notions of this paper: the Polish spaces of pseudonorms $\PP$ (and $\PP_\infty$) representing separable (infinite-dimensional) Banach spaces, and we recall the space of norms $\B$ that has appeared in our previous works \cite{CDK21a} and \cite{CDK21b}. We show some interesting features of these spaces, e.g. the neat relation between finite representability and topological closures in these spaces; see Proposition~\ref{prop:closureOfIsometryClasses} and its corollaries.

Let us start with the following idea of coding the class of separable (infinite-dimensional) Banach spaces. It is based on the idea presented already in our previous papers \cite{CDK21a} and \cite{CDK21b}, where the space $\B$ was defined.
\bigskip

By $V$, we shall denote the vector space over $\Rat$ of all finitely supported sequences of rational numbers; that is, the unique infinite-dimensional vector space over $\Rat$ with a countable Hamel basis $(e_n)_{n\in\Nat}$.

\begin{definition}\label{def:basic}
Let us denote by $\PP$ the space of all pseudonorms on the vector space $V$. Since $\PP$ is a closed subset of $\Rea^V$, this gives $\PP$ the Polish topology inherited from $\Rea^V$. The subbasis of this topology is given by sets of the form $U[v,I]:=\{\mu\in\PP\setsep \mu(v)\in I\}$, where $v\in V$ and $I$ is an open interval.

We often identify $\mu\in\PP$ with its extension to the pseudonorm on the space $c_{00}$, that is, the vector space over $\Rea$ of all finitely supported sequences of real numbers.

For every $\mu\in\PP$ we denote by $X_\mu$ the Banach space given as the completion of the quotient space $X/N$, where $X = (c_{00},\mu)$ and $N = \{x\in c_{00}\setsep \mu(x) = 0\}$. In what follows we often consider $V$ as a subspace of $X_\mu$, that is, we identify every $v\in V$ with its equivalence class $[v]_N\in X_\mu$.

By $\PP_\infty$ we denote the set of those $\mu\in\PP$ for which $X_\mu$ is infinite-dimensional Banach space. As we did in \cite{CDK21a} and \cite{CDK21b}, by $\B$ we denote the set of those $\mu\in\PP_\infty$ for which the extension of $\mu$ to $c_{00}$ is an actual norm, that is, the vectors $ e_{1}, e_{2}, \dots $ are linearly independent in $X_\mu$.

We endow $\PP_\infty$ and $\B$ with the topologies inherited from $\PP$.
\end{definition}

Our first aim is to show that the topologies on $\PP_\infty$ and $\B$ are Polish (see Corollary~\ref{cor:polishTopologies}). This can be easily verified directly, here we obtain it as a corollary of the fact that the relation $\approximates{K}$ defined before is open in $\PP$, a very useful fact that will prove important many times in the paper.

We need the following background first. Given a metric space $(M,d)$, $\varepsilon>0$ and $N,S\subseteq M$ we say that $N$ is \emph{$\varepsilon$-dense for $S$} if for every $x\in S$ there is $y\in N$ with $d(x,y)<\varepsilon$ (let us emphasize that we do not assume $N\subseteq S$). For further references, we recall the following well-known approximation lemma, for a proof see e.g. \cite[Lemma 12.1.11]{albiacKniha}.
\begin{lemma}\label{lem:approx}
There is a function $\phi_1:[0,1)\to[0,1)$ continuous at zero with $\phi_1(0)=0$ such that whenever $T:E\to X$ is a linear operator between Banach spaces, $\varepsilon\in(0,1)$, $M\subseteq E$ is $\varepsilon$-dense for $S_E$ and
    \[
    \forall m\in M: |\|Tm\| - 1|< \varepsilon,
    \]
    then $T$ is a $(1+\phi_1(\varepsilon))$-isomorphism between $E$ and $T(E)$.
\end{lemma}
The following definition precises the notation $\approximates{K}$ defined in the introduction.
\begin{definition}
If $v_1,\ldots,v_n\in V$ are given, for $\mu\in\PP$, instead of $(X_\mu,v_1,\ldots,v_n)\approximates[X]{K}$, we shall write $(\mu,v_1,\ldots,v_n)\approximates[X]{K}$.
\end{definition}
\begin{lemma}\label{lem:infiniteDimIsGDelta}
Let $X$ be a Banach space with $\{x_1,\ldots,x_n\}\subseteq X$ linearly independent and let $v_1,\ldots,v_n\in V$. Then for any $K>1$ the set $$\mathcal{N}((x_i)_i,K,(v_i)_i)=\{\mu\in \PP\colon (\mu,v_1,\ldots,v_n)\approximates[(X,x_1,\ldots,x_n)]{K} \}$$ is open in $\PP$.

In particular, the set of those $\mu\in\PP$ for which the set $\{v_1,\ldots,v_n\}$ is linearly independent in $X_\mu$ is open in $\PP$.
\end{lemma}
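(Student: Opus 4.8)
The plan is to show that $\mathcal{N}((x_i)_i,K,(v_i)_i)$ is the preimage of an open set under a continuous map, by reducing membership to finitely many strict inequalities on the values $\mu(w)$ as $w$ ranges over a suitable finite set of rational combinations of the $v_i$. Recall that $\mu \in \mathcal{N}((x_i)_i,K,(v_i)_i)$ means: the vectors $v_1,\dots,v_n$ are linearly independent in $X_\mu$, and the linear operator $T_\mu \colon \Span\{v_1,\dots,v_n\} \to \Span\{x_1,\dots,x_n\}$ sending $v_i \mapsto x_i$ satisfies $\max\{\|T_\mu\|,\|T_\mu^{-1}\|\} < K$. The obvious difficulty is that $\|T_\mu\|$ and $\|T_\mu^{-1}\|$ involve a supremum over the (infinite) unit sphere of an $n$-dimensional space, and that the domain space depends on $\mu$; so I cannot literally write the condition as a finite Boolean combination of subbasic sets $U[v,I]$. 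The remedy is Lemma~\ref{lem:approx}: it suffices to control $T_\mu$ on a fixed finite $\varepsilon$-net.

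Here is the plan in order. First fix the target: let $E = \Span\{x_1,\dots,x_n\} \subseteq X$ with the norm inherited from $X$, a fixed finite-dimensional Banach space. The map $\mu \mapsto T_\mu$ would be well-defined only when the $v_i$ are independent in $X_\mu$, so instead I work directly with the candidate inverse map $S_\mu \colon E \to X_\mu$ determined by $x_i \mapsto v_i$ (this is always a well-defined linear operator, since $x_1,\dots,x_n$ are genuinely independent). Then $\mu \in \mathcal{N}$ iff $S_\mu$ is injective and $\max\{\|S_\mu\|,\|S_\mu^{-1}\|\} < K$, i.e. iff $S_\mu$ is a $K'$-isomorphism of $E$ onto its image for some $K' < K$. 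Second, pick $K'$ with $1 < K' < K$ and, using Lemma~\ref{lem:approx}, choose $\varepsilon \in (0,1)$ small enough that $1 + \phi_1(\varepsilon) < K'$ and in addition $\frac{1+\varepsilon}{1-\varepsilon} \cdot (1+\phi_1(\varepsilon))$-type bookkeeping still lands below $K$; the point is that a sufficiently tight uniform approximation of $\|S_\mu m\|$ to $1$ on a finite net $M$ of $S_E$ forces $S_\mu$ to be a $(1+\phi_1(\varepsilon))$-isomorphism after rescaling. Third, fix once and for all a finite set $M = \{m_1,\dots,m_r\} \subseteq E$ that is $\varepsilon$-dense for $S_E$, with each $m_j$ expressed in the basis $x_1,\dots,x_n$; let $w_j = \sum_i (m_j)_i v_i \in V$ be the corresponding vector (after clearing denominators this is legitimately an element of $V$, or one passes to the $\Rat$-span and uses continuity of scaling — either way $\mu(S_\mu m_j)$ is, up to a fixed positive scalar, one of the coordinate functions $\mu \mapsto \mu(w_j)$). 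Fourth, one shows the equivalence
\[
\mu \in \mathcal{N}((x_i)_i,K,(v_i)_i) \iff \text{there exists } t > 0 \text{ with } \big| t\,\mu(w_j) - 1 \big| < \varepsilon \text{ for all } j,
\]
or, more cleanly, one phrases it directly as: $\mu \in \mathcal{N}$ iff the finitely many numbers $\mu(w_1),\dots,\mu(w_r)$ lie in a certain fixed open subset $O$ of $\R^r$ (the set of tuples admitting such a rescaling and compatible with the $K$-bound); the forward direction uses that $\|S_\mu\|,\|S_\mu^{-1}\|<K$ gives good two-sided control of each $\mu(w_j)$, and the backward direction is exactly Lemma~\ref{lem:approx} applied to the rescaled operator. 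Fifth, since $\mu \mapsto (\mu(w_1),\dots,\mu(w_r))$ is continuous $\PP \to \R^r$ and $O$ is open, $\mathcal{N}((x_i)_i,K,(v_i)_i)$ is open in $\PP$.

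The main obstacle is the fourth step: setting up the constants so that the net condition $|t\mu(w_j)-1|<\varepsilon$ is genuinely \emph{equivalent} to the global $K$-isomorphism condition, not merely sufficient for a slightly weaker one. This requires choosing $K' \in (1,K)$ first, then $\varepsilon$ from Lemma~\ref{lem:approx} for $K'$, and verifying that the resulting open set $O \subseteq \R^r$ is sandwiched between ``$S_\mu$ is a $K$-isomorphism'' (necessary) and ``$S_\mu$ is a $K'$-isomorphism'' (sufficient) — a routine but careful interval-arithmetic argument with $\phi_1$. The "in particular" clause is then immediate: the set of $\mu$ for which $v_1,\dots,v_n$ are linearly independent in $X_\mu$ equals $\bigcup_{K>1} \bigcup_{(x_i)} \mathcal{N}((x_i)_i,K,(v_i)_i)$ where the inner union is over, say, a fixed choice of $n$ independent vectors in $\ell_2$ (any single $E$ works, since independence of $v_1,\dots,v_n$ in $X_\mu$ is exactly the existence of \emph{some} isomorphism onto \emph{some} $n$-dimensional space), hence is a union of open sets and so open.
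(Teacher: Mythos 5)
Your overall strategy (control the operator on a finite net via Lemma~\ref{lem:approx}, so that membership reduces to finitely many inequalities on values of $\mu$) is the right one and is essentially what the paper does, and your idea of placing the net in the fixed space $E=\Span\{x_1,\dots,x_n\}$ and working with $S_\mu:E\to X_\mu$ rather than with the $\mu$-dependent domain is a legitimate variant. However, Steps 4--5 contain a genuine gap. The displayed equivalence is false in the forward direction: if $K$ is not close to $1$, a $\mu\in\mathcal{N}$ may have the values $\mu(w_j)$ spread over essentially the whole interval $(1/K,K)$, and no single rescaling $t$ brings them all within $\varepsilon$ of $1$. More seriously, the ``cleaner'' reformulation --- that $\mathcal{N}$ equals the preimage of one fixed open set $O\subseteq\R^r$ under the single evaluation map $\mu\mapsto(\mu(w_1),\dots,\mu(w_r))$ --- cannot hold for $n\ge 2$: the net lemma controls $\|S_\mu\|$ only up to a multiplicative factor $1+\phi_1(\varepsilon)>1$, so two pseudonorms agreeing on the fixed finite set $\{w_j\}$ can have operator norms $\|S_\mu\|$ differing by a factor that straddles $K$, i.e.\ membership in $\mathcal{N}$ is simply not a function of the tuple $(\mu(w_j))_j$. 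Your own sandwich $\{K'\text{-isom}\}\subseteq\{(\mu(w_j))_j\in O\}\subseteq\{K\text{-isom}\}$ concedes exactly this: it shows the preimage of $O$ is an open subset of $\mathcal{N}$, but since $\mathcal{N}$ strictly contains $\{K'\text{-isom}\}$ for every fixed $K'<K$, it does not show that $\mathcal{N}$ is covered by such preimages, hence does not prove openness.

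The repair is to give up on a single global net and localize: for each $\mu\in\mathcal{N}$ pick $L<K$ with $\max\{\|T_\mu\|,\|T_\mu^{-1}\|\}<L$, then choose $\varepsilon$ with $L(1+\phi_1(2\varepsilon))<K$ and a finite $\varepsilon$-net \emph{adapted to this $\mu$}, and check that the resulting basic open neighborhood of $\mu$ sits inside $\mathcal{N}$. (Equivalently, take the union of your sets $O_{K'}$ over a sequence $K'\nearrow K$, with $\varepsilon$ and the net depending on $K'$.) Once you do this, your argument becomes the paper's proof, up to the cosmetic difference that the paper places the net in the sphere of $(\Span\{v_1,\dots,v_n\},\mu)$ and compares $\mu$ with a nearby $\nu$ through the identity map, whereas you compare $S_\mu$ with $S_\nu$ through the fixed target $E$. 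Your treatment of the ``in particular'' clause is fine.
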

\begin{proof}Pick some $\mu\in \mathcal{N}((x_i)_i,K,(v_i)_i)$. By definition, the linear map $T$ sending $v_i$ to $x_i\in X$, $i\leq n$, is a linear isomorphism satisfying $\max\{\|T\|,\|T^{-1}\|\}<L$ for some $L<K$. Let $\phi_1$ be the function provided by Lemma~\ref{lem:approx} and pick $\varepsilon>0$ such that $L(1 + \phi_1(2\varepsilon))<K$.
Let $N\subseteq V$ be a finite $\varepsilon$-dense set for the sphere of $\Span\{v_1,\ldots,v_n\}\subseteq X_\mu$ such that $\mu(v)\in(1-\varepsilon,1+\varepsilon)$ for every $v\in N$. Then
\[
U:=\{\nu\in\PP\setsep \forall v\in N:\; |\nu(v)-\mu(v)|<\varepsilon\}
\]
is an open neighborhood of $\mu$ and $U\subseteq \mathcal{N}((x_i)_i,K,(v_i)_i)$. Indeed, for any $\nu\in U$ we have that $id:(\Span\{v_1,\ldots,v_n\},\mu)\to (\Span\{v_1,\ldots,v_n\},\nu)$ is a $(1+\phi_1(2\varepsilon))$-isomorphism; hence, the linear map $T$ considered as a map betweeen $(\Span\{v_1,\ldots,v_n\},\nu)$ and $\Span\{x_1,\ldots,x_n\}$ satisfies $\|T\|<L(1 + \phi_1(2\varepsilon))<K$, and similarly $\|T^{-1}\|<K$; hence, $\nu\in \mathcal{N}((x_i)_i,K,(v_i)_i)$.

The ``In particular'' part easily follows, because $v_1,\ldots,v_n\in V$ are linearly independent if and only if there exists $K>1$ with $(\mu,v_1,\ldots,v_n)\approximates[\ell_1^n]{K}$.
\end{proof}

\begin{corollary}\label{cor:polishTopologies}
Both $\PP_\infty$ and $\B$ are $G_\delta$-sets in $\PP$.
\end{corollary}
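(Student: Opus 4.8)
The plan is to exhibit both $\PP_\infty$ and $\B$ as countable intersections of open subsets of $\PP$, invoking at each level the ``In particular'' part of Lemma~\ref{lem:infiniteDimIsGDelta}: for fixed $v_1,\dots,v_n\in V$ the set of $\mu\in\PP$ for which $\{v_1,\dots,v_n\}$ is linearly independent in $X_\mu$ is open in $\PP$.

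I would first dispatch $\B$, which is immediate. The images of $e_1,e_2,\dots$ in $X_\mu$ are linearly independent precisely when $\{e_1,\dots,e_n\}$ is linearly independent in $X_\mu$ for every $n$, and this condition already forces $X_\mu$ to be infinite-dimensional, so by Definition~\ref{def:basic}
\[
\B=\bigcap_{n\in\Nat}\bigl\{\mu\in\PP\setsep \{e_1,\dots,e_n\}\text{ is linearly independent in }X_\mu\bigr\}.
\]
Each member of this intersection is open by Lemma~\ref{lem:infiniteDimIsGDelta}, so $\B$ is $G_\delta$ in $\PP$.

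For $\PP_\infty$ the only additional ingredient is a reformulation of infinite-dimensionality over the countable set $V$: I claim that $X_\mu$ is infinite-dimensional if and only if for every $n\in\Nat$ there is a tuple $(v_1,\dots,v_n)\in V^n$ that is linearly independent in $X_\mu$. The backward implication is trivial. For the forward implication one uses that the image of $V$ is a dense linear subspace of $X_\mu$, so it cannot be contained in any finite-dimensional (hence closed, proper) subspace; thus one can choose $v_1,\dots,v_n\in V$ inductively so that $[v_{k}]$ lies outside $\Span\{[v_1],\dots,[v_{k-1}]\}$ at each step. With this in hand,
\[
\PP_\infty=\bigcap_{n\in\Nat}\ \bigcup_{(v_1,\dots,v_n)\in V^n}\bigl\{\mu\in\PP\setsep \{v_1,\dots,v_n\}\text{ is linearly independent in }X_\mu\bigr\}.
\]
For each $n$ the inner union ranges over the countable set $V^n$ and each term is open by Lemma~\ref{lem:infiniteDimIsGDelta}, so it is open; hence $\PP_\infty$ is a countable intersection of open sets, i.e.\ $G_\delta$ in $\PP$.

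No step here is genuinely difficult. The one point deserving a line of care is the reformulation of infinite-dimensionality over $V$ used for $\PP_\infty$ --- a routine density argument as sketched --- together with the observation that linear independence of the full sequence $(e_n)_n$ is already detected by its finite initial segments. One could also remark that, being $G_\delta$ in the Polish space $\PP$, both $\PP_\infty$ and $\B$ are themselves Polish in the inherited topology.
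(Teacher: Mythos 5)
Your proof is correct and is essentially the argument the paper intends: the corollary is stated without proof precisely because it follows from the ``In particular'' part of Lemma~\ref{lem:infiniteDimIsGDelta} in the way you describe, writing $\B$ and $\PP_\infty$ as countable intersections of open sets detecting linear independence of finite tuples. Your two supporting observations (that linear independence of the whole sequence $(e_n)_n$ is detected by its initial segments, and that infinite-dimensionality of $X_\mu$ is witnessed by arbitrarily long linearly independent tuples from the dense set $V$) are exactly the routine points one needs to check.
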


Since we are interested mainly in subsets of $\PP$ closed under isometries, we introduce the following notation.

\begin{notation}\label{not:ismfrClass}
Let $Z$ be a separable Banach space and let $\mathcal{I}$ be a subset of $\PP$. We put
\[\isomtrclass[\mathcal{I}]{Z}:=\{\mu\in\mathcal{I}\setsep X_\mu\equiv Z\} \quad \text{and} \quad \isomrfclass[\mathcal{I}]{Z}:=\{\mu\in\mathcal{I}\setsep X_\mu\simeq Z\}.\]
If $\mathcal{I}$ is clear from the context we write $\isomtrclass{Z}$ and $\isomrfclass{Z}$ instead of $\isomtrclass[\mathcal{I}]{Z}$ and $\isomrfclass[\mathcal{I}]{Z}$, respectively.
\end{notation}

The important feature of the topology of the spaces $\PP$, $\PP_\infty$ and $\B$ is that basic open neighborhoods are defined using finite data, i.e. finitely many vectors. That suggests that topological properties of the aforementioned spaces should be closely related to the local theory of Banach spaces. This is certainly a point that could be investigated further in a future research. Here we just observe how topological closures are related to finite representability, see Proposition~\ref{prop:closureOfIsometryClasses}. In order to formulate our results, let us consider the following generalization of the classical notion of finite representability.

\begin{definition}
Let $\F$ be a family of Banach spaces. We say that a Banach space $X$ is finitely representable in $\F$ if given any finite-dimensional subspace $E$ of $X$ and any $\varepsilon > 0$ there exists a finite-dimensional subspace $F$ of some $Y\in\F$ which is $(1+\varepsilon)$-isomorphic to $E$.

If the family $\F$ consists of one Banach space $Y$ only, we say rather that $X$ is finitely representable in $Y$ than in $\{Y\}$.

If $\F\subseteq \PP$, by saying that $X$ is finitely representable in $\F$ we mean it is finitely representable in $\{X_\mu\setsep \mu\in\F\}$.
\end{definition}

The following is an easy observation which we will use further, the proof follows e.g. immediately from \cite[Lemma 12.1.7]{albiacKniha} in the case that $\F$ contains one Banach space only. For the more general situation the proof is analogous.
\begin{lemma}\label{lem:finiteReprLemma}
Let $\F$ be a family of infinite-dimensional Banach spaces and $\mu\in\PP_\infty$. Let $\{k(n)\}_{n=1}^\infty$ be a sequence such that $\{e_{k(n)}\setsep n\in\Nat\}$ is a linearly independent set in $X_\mu$ and $\overline{\Span}\{e_{k(n)}\setsep n\in\Nat\}=X_\mu$. Then $X_\mu$ is finitely representable in $\F$ if and only if for every $n\in\Nat$ and $\varepsilon>0$ there exists a finite-dimensional subspace $F$ of some $Y\in\F$ which is $(1+\varepsilon)$-isomorphic to $(\Span\{e_{k(1)},\ldots,e_{k(n)}\},\mu)$.
\end{lemma}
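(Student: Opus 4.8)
The statement to prove is Lemma~\ref{lem:finiteReprLemma}: finite representability of $X_\mu$ in a family $\F$ of infinite-dimensional Banach spaces is equivalent to the a priori weaker condition that only the specific nested sequence of finite-dimensional subspaces $\Span\{e_{k(1)},\dots,e_{k(n)}\}$ needs to be $(1+\varepsilon)$-embeddable into members of $\F$.

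The plan:

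The forward direction is trivial — if $X_\mu$ is finitely representable in $\F$, then in particular each finite-dimensional subspace $\Span\{e_{k(1)},\dots,e_{k(n)}\}$ is, so the "only if" direction needs nothing.

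For the "if" direction: Given an arbitrary finite-dimensional subspace $E \subseteq X_\mu$ and $\varepsilon > 0$, I need to find a finite-dimensional $F$ inside some $Y \in \F$ that is $(1+\varepsilon)$-isomorphic to $E$. The key approximation tool is Lemma~\ref{lem:approx}: a linear map that sends an $\delta$-net of the sphere approximately to norm-$1$ vectors is a near-isometry. So the strategy is: (1) since $\overline{\Span}\{e_{k(n)} : n \in \Nat\} = X_\mu$, the subspace $\Span\{e_{k(1)},\dots,e_{k(m)}\}$ for $m$ large comes $\delta$-close (in the sense that every unit vector of $E$ is within $\delta$ of that subspace) to $E$ — this is the standard small-perturbation/principle-of-small-perturbations argument for finite-dimensional subspaces of a space with dense union of nested subspaces. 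Using Lemma~\ref{lem:approx} (or the classical "small perturbation" lemma, \cite[Lemma 12.1.7]{albiacKniha} as cited), $E$ is $(1+\varepsilon')$-isomorphic to some subspace $\tilde E \subseteq \Span\{e_{k(1)},\dots,e_{k(m)}\}$ for suitable $m$ and small $\varepsilon'$. (2) By hypothesis, for any $\eta > 0$ there is a finite-dimensional $F \subseteq Y$, $Y \in \F$, with $F$ being $(1+\eta)$-isomorphic to $\Span\{e_{k(1)},\dots,e_{k(m)}\}$; restricting that isomorphism to the image of $\tilde E$ gives a subspace of $F$ that is $(1+\eta)$-isomorphic to $\tilde E$. (3) Composing the two isomorphisms, $E$ is $(1+\varepsilon')(1+\eta)$-isomorphic to a subspace of $Y \in \F$; choosing $\varepsilon'$ and $\eta$ small enough that $(1+\varepsilon')(1+\eta) \le 1+\varepsilon$ finishes the proof.

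The one genuinely substantive step — and the main obstacle, though it is classical — is step (1): showing that a finite-dimensional subspace $E$ of $X_\mu$ can be $(1+\varepsilon)$-embedded into $\Span\{e_{k(1)},\dots,e_{k(m)}\}$ for $m$ large. Here one picks a basis $f_1, \dots, f_d$ of $E$, approximates each $f_i$ arbitrarily well by vectors $g_i \in \Span\{e_{k(n)}\}$ (possible by density), notes that for the approximation fine enough the $g_i$ are still linearly independent and span a subspace $\tilde E$ with the identity-type map $f_i \mapsto g_i$ being a $(1+\varepsilon)$-isomorphism — this is exactly the content of the cited \cite[Lemma 12.1.7]{albiacKniha}, and it relies on the fact that on a finite-dimensional space all norms are equivalent with controlled constants, so small coordinate perturbations yield small operator-norm perturbations. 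Everything else is bookkeeping with multiplicative error terms. I would phrase the argument to invoke \cite[Lemma 12.1.7]{albiacKniha} directly for step (1) rather than reprove it, and present steps (2)–(3) as a short explicit composition-of-isomorphisms computation.
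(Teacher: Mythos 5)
Your proposal is correct and follows essentially the same route as the paper, which gives no detailed argument at all: it simply states that the lemma ``follows immediately from [Albiac--Kalton, Lemma 12.1.7]'' (the small-perturbation principle you invoke in step (1)), with the composition of near-isometries in steps (2)--(3) left implicit. Your write-up just makes that standard bookkeeping explicit.
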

\begin{proposition}\label{prop:closureOfIsometryClasses}
Let $\F\subseteq \B$ be such that $\isomtrclass[\B]{X_\mu}\subseteq \F$ for every $\mu\in\F$. Then
\[\{\nu\in\B\setsep X_\nu\text{ is finitely representable in }\F\}= \overline{\F}\cap \B.\]
    The same holds if we replace $\B$ with $\PP_\infty$ or with $\PP$.
    
    In particular, if $X$ is a separable infinite-dimensional Banach space, then
    \[\{\nu\in\B\setsep X_\nu\text{ is finitely representable in }X\} =  \overline{\isomtrclass[\B]{X}}\cap\B,\]
    and similarly also if we replace $\B$ with $\PP_\infty$ or with $\PP$.
\end{proposition}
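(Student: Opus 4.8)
The strategy is to prove the two inclusions of the main identity $\{\nu\in\B\setsep X_\nu\text{ is finitely representable in }\F\}= \overline{\F}\cap \B$ separately, using the fact that basic open neighborhoods in $\PP$ are controlled by finitely many vectors of $V$, together with the approximation machinery of Lemma~\ref{lem:infiniteDimIsGDelta} and Lemma~\ref{lem:finiteReprLemma}. The "in particular" statement then follows immediately by taking $\F=\isomtrclass[\B]{X}$, which satisfies the hypothesis $\isomtrclass[\B]{X_\mu}\subseteq\F$ for every $\mu\in\F$ since all such $X_\mu$ are isometric to $X$, and by noting that a space is finitely representable in $\F$ iff it is finitely representable in $X$.

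\smallskip

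\textbf{The inclusion $\overline{\F}\cap\B\subseteq\{\nu\in\B\setsep X_\nu\text{ is f.r.\ in }\F\}$.} Fix $\nu\in\overline{\F}\cap\B$. Since $\nu\in\B$, the vectors $e_1,e_2,\dots$ are linearly independent in $X_\nu$ and densely span it, so by Lemma~\ref{lem:finiteReprLemma} (with $k(n)=n$) it suffices to show that for each $n$ and each $\varepsilon>0$ there is a finite-dimensional subspace of some $X_\mu$ with $\mu\in\F$ which is $(1+\varepsilon)$-isomorphic to $E:=(\Span\{e_1,\dots,e_n\},\nu)$. By Lemma~\ref{lem:infiniteDimIsGDelta}, the set $\NN((e_i)_{i\le n},1+\varepsilon,(e_i)_{i\le n})=\{\mu\in\PP\setsep(\mu,e_1,\dots,e_n)\approximates[(X_\nu,e_1,\dots,e_n)]{1+\varepsilon}\}$ is open in $\PP$ and contains $\nu$ (the identity witnesses the approximation for $\mu=\nu$). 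Since $\nu\in\overline{\F}$, this open set meets $\F$; any $\mu$ in the intersection gives $\Span\{e_1,\dots,e_n\}\subseteq X_\mu$ which is $(1+\varepsilon)$-isomorphic to $E$, as required.

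\smallskip

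\textbf{The inclusion $\{\nu\in\B\setsep X_\nu\text{ is f.r.\ in }\F\}\subseteq\overline{\F}\cap\B$.} Fix $\nu\in\B$ with $X_\nu$ finitely representable in $\F$; we must show every basic open neighborhood of $\nu$ meets $\F$. Such a neighborhood is determined by finitely many vectors $v_1,\dots,v_m\in V$ and $\delta>0$, namely $U=\{\mu\setsep|\mu(v_j)-\nu(v_j)|<\delta\text{ for }j\le m\}$; enlarging the support, we may assume $v_1,\dots,v_m$ lie in $\Span\{e_1,\dots,e_n\}$ for some $n$. Put $E:=(\Span\{e_1,\dots,e_n\},\nu)$; by finite representability there is a finite-dimensional subspace $F$ of some $X_\mu$, $\mu\in\F$, and a $(1+\eta)$-isomorphism $S\colon E\to F$ (for $\eta$ small, chosen at the end). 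Now comes the step that needs the hypothesis on $\F$: I want to \emph{realize} this finite-dimensional approximation inside a single element of $\F$ supported on the first coordinates. The idea is to build a pseudonorm $\mu'$ on $V$ which on $\Span\{e_1,\dots,e_n\}$ agrees with the pullback of the norm of $F$ along $S$ (so it approximately matches $\nu$ on $v_1,\dots,v_n$, hence $\mu'\in U$ for $\eta$ small enough), and which extends to something whose completion is isometric to $X_\mu$ — e.g.\ by conjugating $\mu$ itself by a bijection of the Hamel basis sending $e_1,\dots,e_n$ to a basis of (a space containing) $F$ and distributing the rest. Then $X_{\mu'}\equiv X_\mu$, so $\mu'\in\isomtrclass[\B]{X_\mu}\subseteq\F$ by hypothesis, and $\mu'\in U\cap\F$. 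Finally, the adjustments for $\PP_\infty$ and $\PP$ in place of $\B$ are cosmetic: for $\PP_\infty$ one drops linear independence but keeps infinite-dimensionality (using a suitable $\{k(n)\}$ as in Lemma~\ref{lem:finiteReprLemma}), and for $\PP$ one allows finite-dimensional $X_\mu$ as well, in which case the finite-representability side must be read with possibly finite-dimensional spaces; the arguments transfer verbatim.

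\smallskip

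\textbf{Main obstacle.} The delicate point is the realization step in the second inclusion: turning a single finite-dimensional $(1+\eta)$-approximation living in some $X_\mu$ into a genuine element $\mu'\in\F$ that simultaneously lies in the prescribed basic open set $U$. This is exactly where the closure-under-isometry hypothesis $\isomtrclass[\B]{X_\mu}\subseteq\F$ is essential, and one must be careful that the conjugation/extension producing $\mu'$ (i) keeps $e_1,\dots,e_n$ linearly independent so that $\mu'\in\B$ (respectively keeps $X_{\mu'}$ infinite-dimensional for $\PP_\infty$), and (ii) controls $\mu'(v_j)$ for $j\le m$ within $\delta$ of $\nu(v_j)$, which follows by choosing $\eta$ small relative to $\delta$ and $\max_j\|v_j\|_\nu$ via the operator-norm bounds on $S$ and $S^{-1}$. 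Everything else is a routine unwinding of definitions and the openness lemma.
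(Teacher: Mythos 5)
Your proposal is correct and follows essentially the same route as the paper: one direction is the push-forward construction $\mu'(\sum\alpha_ie_i)=\mu(\sum\alpha_ix_i)$ along a linearly independent, linearly dense extension of the images of $e_1,\dots,e_m$ under the $(1+\eta)$-isomorphism, invoking the closure-under-isometry hypothesis to land back in $\F$; the other direction reduces via Lemma~\ref{lem:finiteReprLemma} to approximating $(\Span\{e_1,\dots,e_n\},\nu)$ inside some $X_\mu$ with $\mu\in\F$. Your use of the open set from Lemma~\ref{lem:infiniteDimIsGDelta} in the latter direction is just a packaged form of the paper's explicit $\delta$-net argument, so the two proofs coincide in substance.
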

\begin{proof}
``$\subseteq$'': Fix $\nu\in\B$ such that $X_\nu$ is finitely-representable in $\F$. Pick $v_1,\ldots,v_n\in V$ and $\varepsilon>0$. We shall show there is $\mu_0\in\F$ with $|\mu_0(v_i)-\nu(v_i)|<\varepsilon$, $i\leq n$. Let $m\in\Nat$ be such that $\{v_1,\ldots,v_n\}\subseteq \qeSpan\{e_j\setsep j\leq m\}$. Put $C:=\max\{\nu(v_i)\setsep i=1,\ldots,n\}$ and $Z:=\Span \{e_1,\ldots,e_m\}\subseteq X_\nu$. Since $X_\nu$ is finitely representable in $\F$, there is $\mu\in\F$ and a $(1+\tfrac{\varepsilon}{2C})$-isomorphism $T:Z\to X_\mu$. Set $x_i:=T(e_i)$, $i\leq m$, and extend $x_1,\ldots,x_m$ to a linearly independent sequence $(x_i)_{i=1}^{\infty}$ whose span is dense in $X_\mu$. Consider $\mu_0\in\PP$ given by setting $\mu_0(\sum_{i\in I} \alpha_i e_i)=\mu(\sum_{i\in I} \alpha_i x_i)$, where $I\subseteq \Nat$ is finite and $(\alpha_i)_{i\in I}\subseteq \Rat$. Clearly, $X_{\mu_0}\equiv X_\mu$ and $\mu_0\in\B$, so $\mu_0\in\F$. Finally, for every $i\leq n$ we have $v_i = \sum_{j=1}^m \alpha_j e_j$ for some $(\alpha_j)\in\Rea^m$ and so we have 
\[\mu_0(v_i) = \mu(\sum_{j=1}^m \alpha_j x_j)\leq (1+\tfrac{\varepsilon}{2C})\nu(\sum_{j=1}^m \alpha_j e_j) = (1+\tfrac{\varepsilon}{2C})\nu(v_i),\]
and similarly $\mu_0(v_i)\geq (1+\tfrac{\varepsilon}{2C})^{-1}\nu(v_i)\geq (1-\tfrac{\varepsilon}{2C})\nu(v_i)$.
Thus, $|\mu_0(v_i) - \nu(v_i)|\leq \tfrac{\varepsilon}{2}<\varepsilon$ for every $i\leq n$.

The case when we replace $\B$ with $\PP_\infty$ or $\PP$ is analogous, this time we only do not require $(x_i)_{i=1}^\infty$ to be linearly independent.
\medskip

\noindent``$\supseteq$'': Fix $\nu\in\overline{\mathcal{F}}\cap \B$. In order to see that $X_\nu$ is finitely representable in $\F$, we will use Lemma~\ref{lem:finiteReprLemma}. Pick $n\in\Nat$ and $\varepsilon>0$. Let $\phi_1$ be the function from Lemma~\ref{lem:approx}, let $\delta>0$ be such that $\phi_1(2\delta)<\varepsilon$ and let $N\subseteq V$ be a finite set which is $\delta$-dense for the sphere of $(\Span\{e_1,\ldots,e_n\},\nu)$ and $\nu(v)\in(1-\delta,1+\delta)$ for every $v\in N$. Pick $\mu\in\F$ such that $|\mu(v)-\nu(v)|<\delta$, $v\in N$. Then $id:(\Span\{e_1,\ldots,e_n\},\nu)\to (\Span\{e_1,\ldots,e_n\},\mu)$ is a $(1+\phi_1(2\delta))$-isomorphism. Thus, $X_\nu$ is finitely representable in $\F$. The case when we replace $\B$ with $\PP_\infty$ or $\PP$ is similar.
\end{proof}

This result has interesting consequences.

\begin{corollary}\label{cor:dense}
Let $X$ be a separable Banach space such that every Banach space is finitely representable in $X$. Then its isometry class is dense (in $\PP$, $\PP_\infty$ and also in $\B$).
\end{corollary}
\begin{proof}
Follows immediately from Proposition~\ref{prop:closureOfIsometryClasses}.
\end{proof}

\begin{corollary}\label{cor:Dvoretzky}
Let $X$ be a separable infinite-dimensional Banach space. Then $\isomtrclass[\B]{\ell_2}\subseteq \overline{\isomtrclass[\B]{X}}\cap\B$. The same holds if we replace $\B$ with $\PP_\infty$ or $\PP$.
\end{corollary}
\begin{proof}
By the Dvoretzky theorem, $\ell_2$ is finitely representable in every separable infinite-dimensional Banach space (see e.g. \cite[Theorem 13.3.7]{albiacKniha}). So we are done by applying Proposition~\ref{prop:closureOfIsometryClasses}.
\end{proof}

We conclude this subsection by showing another nice features of the above topologies on examples. We can show that the natural maps $K\mapsto C(K)$ and $\lambda\mapsto L_p(\lambda)$, where $K$ is a compact metrizable space and $\lambda$ is a Borel probability measure on a fixed compact metric space, are continuous.

\begin{example}\label{primitivnipriklad}
\begin{itemize}
\item[(a)] 
Let $\mathcal K([0,1]^\mathbb N)$ denote the space of all compact subsets of the Hilbert cube $[0,1]^\mathbb N$ endowed with the Vietoris topology. Then there exists a continuous mapping $\rho\colon\mathcal K([0,1]^\mathbb N)\rightarrow\mathcal P$ such that $X_{\rho(K)}\equiv C(K)$ for every $K\in\mathcal K([0,1]^\mathbb N)$.

\item[(b)] Let $L$ be a compact metric space, let $p\in[1,\infty)$ be fixed and let ${\mathcal Prob}(L)$ denote the space of all Borel probability measures on $L$ endowed with the weak* topology (generated by elements of the Banach space $C(L)$).
Then there exists a continuous mapping $\sigma\colon{\mathcal Prob}(L)\rightarrow\mathcal P$ such that $X_{\sigma(\lambda)}\equiv L_p(\lambda)$ for every $\lambda\in{\mathcal Prob}(L)$.
\end{itemize}
\end{example}

\begin{proof}
\begin{itemize}
\item[(a)] Let $\{f_i\colon i\in\mathbb N\}$ be a linearly dense subset of $C([0,1]^\mathbb N)$.
For every compact subset $K$ of $[0,1]^\mathbb N$, we define $\rho(K)\in\mathcal P$ by $$\rho(K)\left(\sum_{i=1}^nr_ie_i\right)=\sup_{x\in K}\left|\sum_{i=1}^nr_if_i(x)\right|, \qquad \sum_{i=1}^nr_ie_i\in V.$$
It is clear that $X_{\rho(K)}\equiv C(K)$, so we only need to check the continuity of $\rho$.
It is enough to show that $\rho^{-1}(U[v,I])$ is an open subset of $\mathcal K([0,1]^\mathbb N)$ for every $v\in V$ and every open interval $I$ (recall that $U[v,I]=\{\mu\in\mathcal P\colon\mu(v)\in I\}$).
So let us fix $\widetilde K\in\rho^{-1}(U[v,I])$, and assume that $v=\sum_{i=1}^nr_ie_i$.
Fix $x_0\in\widetilde K$ such that
$$\left|\sum_{i=1}^nr_if_i(x_0)\right|=\sup_{x\in\widetilde K}\left|\sum_{i=1}^nr_if_i(x)\right|.$$
Fix also $\varepsilon>0$ such that both numbers $\left|\sum_{i=1}^nr_if_i(x_0)\right|\pm\varepsilon$ belong to $I$.
Now find open subsets $U,V$ of $[0,1]^\mathbb N$ such that $x_0\in U$ and $\widetilde K\subseteq V$, and such that
$$\inf_{x\in U}\left|\sum_{i=1}^nr_if_i(x)\right|>\left|\sum_{i=1}^nr_if_i(x_0)\right|-\varepsilon$$
and
$$\sup_{x\in V}\left|\sum_{i=1}^nr_if_i(x)\right|<\sup_{x\in\widetilde K}\left|\sum_{i=1}^nr_if_i(x)\right|+\varepsilon.$$Then
$$\mathcal U:=\{K\in\mathcal K([0,1]^\mathbb N)\colon K\cap U\neq\emptyset\textnormal{ and }K\subseteq V\}$$
is an open neighborhood of $\widetilde K$ such that $\rho(\mathcal U)\subseteq U[v,I]$.
\item[(b)] This is similar to (a) but even easier. Let $\{g_i\colon i\in\mathbb N\}$ be a linearly dense subset of $C(L)$.
For every Borel probability measure $\lambda$ on $L$, we define $\sigma(\lambda)\in\mathcal P$ by $$\sigma(\lambda)\left(\sum_{i=1}^nr_ie_i\right)=\left(\int\limits_L\left|\sum_{i=1}^nr_ig_i\right|^p\,d\lambda\right)^{\frac 1p}, \qquad \sum_{i=1}^nr_ie_i\in V.$$
It is clear that $X_{\sigma(\lambda)}\equiv L_p(\lambda)$, so we only need to check the continuity of $\sigma$.
It is enough to show that $\sigma^{-1}(U[v,I])$ is an open subset of ${\mathcal Prob}(L)$ for every $v\in V$ and every open interval $I$.
But this is clear as, for $v=\sum_{i=1}^nr_ie_i$, we have
$$\sigma^{-1}(U[v,I])=\left\{\lambda\in{\mathcal Prob}(L)\colon\left(\int\limits_L\left|\sum_{i=1}^nr_ig_i\right|^p\,d\lambda\right)^{\frac 1p}\in I\right\}.$$
\end{itemize}
\end{proof}

\begin{remark}
After the introduction of the spaces $\PP$, $\PP_\infty$, and $\B$, one faces the question which of them is `the right one', with which to work. For now, we leave the question undecided. Since we are mainly interested in infinite-dimensional Banach spaces, we prefer to work mainly with $\PP_\infty$ and $\B$. On the other hand, it turns out that at least as far as one wants to transfer some computations performed in the space of pseudonorms directly to admissible topologies, the space $\PP$ is useful: Theorem~\ref{thm:reductionFromSBToP} below shows that whatever we compute in the space $\PP$ holds true also in any admissible topology.

Regarding $\PP_\infty$ and $\B$, in most of the arguments there is no difference whether we work with the former or the latter space. However, there are few exceptions when it seems to be convenient to work with the assumption that the sequence of vectors $\langle e_n\colon n\in\Nat\rangle\subseteq V$ is linearly independent, and then it might be more natural to work with $\B$.
\end{remark}

\section{Choice of the Polish space of separable Banach spaces}\label{sect:choice}

The main outcome of this section is Theorem~\ref{thm:Intro1} (denoted here as Theorem~\ref{thm:reductionFromBToSB}). We also prove partial converses to this result, see Theorem~\ref{thm:reductionFromSBToP} and Proposition~\ref{prop:reductionFromPInftyToB}. Let us give some more details.

\begin{enumerate}
    \item In the first subsection, we recall the coding $SB(X)$ (and $SB_\infty(X)$) of separable (infinite-dimensional) Banach spaces. We recall the notion of an admissible topology introduced in \cite{GS18}, which is a Polish topology corresponding to the Effros-Borel structure of $SB(X)$. We explore some basic relations between codings $\PP$, $\PP_\infty$, $\B$, $SB(X)$ and $SB_\infty(X)$. We show there is a continuous reduction from $SB(X)$ to $\PP$, a $\boldsymbol{\Sigma}_2^0$-measurable reduction from $\PP_\infty$ to $\B$, and a $\boldsymbol{\Sigma}_4^0$-measurable reduction from $\PP$ to $SB(X)$, see Theorem~\ref{thm:reductionFromSBToP}, Proposition~\ref{prop:reductionFromPInftyToB}, and Theorem~\ref{thm:reductionFromPToSBWorseVersion}. Here by a `reduction', we mean a map $\Phi$ such that a code and its image are both codes of the same (up to isometry) Banach space.
    \item The second subsection is devoted to the proof of Theorem~\ref{thm:reductionFromBToSB}, by which there is a $\boldsymbol{\Sigma}_2^0$-measurable reduction from $\B$ to $SB_\infty(X)$. Further, we note that the developed techniques also lead to a $\boldsymbol{\Sigma}_3^0$-measurable reduction from $\PP$ to $SB(X)$, which is an improvement of the result mentioned above.
\end{enumerate}

The importance of the reductions above is that there is not a big difference between Borel ranks when considered in any of the Polish spaces mentioned above.

Let us emphasize that the existence of a Borel reduction from $\B$ to $SB_\infty(X)$ has been essentially proved in \cite[Lemma 2.4]{Ku18}. Going through the proof of \cite[Lemma 2.4]{Ku18}, one may obtain a reduction which is $\boldsymbol{\Sigma}_3^0$-measurable; however, the proof does not seem to give a $\boldsymbol{\Sigma}_2^0$-measurable reduction (which is the optimal result). In order to obtain this improvement, see Theorem~\ref{thm:reductionFromBToSB}, we have to develop a whole machinery of new ideas in combination with very technical results, and this is the reason why we devote a whole subsection to the proof.

Since our reductions from $SB_\infty(X)$ to $\PP_\infty$ and from $\B$ to $SB_\infty(X)$ are optimal, it seems to be a very interesting open problem of whether there exists a continuous reduction from $\PP_\infty$ to $\B$ or at least a $\boldsymbol{\Sigma}_2^0$-measurable reduction from $\PP_\infty$ to $SB_\infty(X)$, see Question~\ref{q:1} and Question~\ref{q:2}.

\subsection{Relations between codings $\PP$, $\PP_\infty$, $\B$, $SB(X)$ and $SB_\infty(X)$}

Here we recall the approach to topologizing the class of all separable (infinite-dimensional) Banach spaces by Godefroy and Saint-Raymond from \cite{GS18} which was a partial motivation for our research.

\begin{definition}
Let $X$ be a Polish space and let us denote by $\F(X)$ the set of all closed subsets of $X$. For an open set $U\subseteq X$ we put $E^+(U) = \{F\in\F(X)\setsep U\cap F\neq \emptyset\}$. Following \cite{GS18}, we say that a Polish topology $\tau$ on the set $\F(X)$ is \emph{admissible} if it satisfies the following two conditions:
\begin{enumerate}[(i)]
    \item For every open subset $U$ of $X$, the set $E^+(U)$ is $\tau$-open.
    \item There exists a subbasis of $\tau$ such that every set from this subbasis is a countable union of sets of the form $E^+(U)\setminus E^+(V)$, where $U$ and $V$ are open in $X$.\footnote{note that the condition (ii) is different from what is mentioned in \cite{GS18}; however, as the authors have confirmed, there is a typo in the condition from \cite{GS18} which makes it wrong (otherwise no single one of the topologies mentioned in \cite{GS18} would be admissible)}
\end{enumerate}

We note that Godefroy and Saint-Raymond also suggest the following optional condition that is satisfied by many natural admissible topologies.
\begin{enumerate}[(i)]
\setcounter{enumi}{2}
    \item  The set $\{(x,F)\in X\times\F(X)\setsep x\in F\}$ is closed in $X\times\F(X)$.
\end{enumerate}
\medskip

If $X$ is a separable Banach space, we denote by $SB(X)\subseteq \F(X)$ the set of closed vector subspaces of $X$. We denote by $SB_\infty(X)$ the subset of $SB(X)$ consisting of infinite-dimensional spaces. We say that a topology on $SB(X)$ or $SB_\infty(X)$ is \emph{admissible} if it is induced by an admissible topology on $\F(X)$. Both $SB(X)$ and $SB_\infty(X)$ are Polish spaces when endowed with an admissible topology, see Remark~\ref{rem:polishTopologies}.

If $Z$ is a separable Banach space
, we put, similarly as in Notation~\ref{not:ismfrClass}, 
\[
\isomtrclass{Z}:=\{F\in SB(X)\setsep F\equiv Z\}\quad \text{and} \quad \isomrfclass{Z}:=\{F\in SB(X)\setsep F\simeq Z\}.
\]
It will be always clear from the context whether we work with subsets of $\PP$, or $SB(X)$.
\end{definition}

\begin{remark}\label{rem:polishTopologies}
If $X$ is a separable Banach space and $\tau$ is an admissible topology on $\F(X)$, then $SB(X)$ is a $G_\delta$-subset of $(\F(X),\tau)$ (see \cite[Section 3]{GS18}).
Moreover, by \cite[Corollary 4.2]{GS18}, $SB_\infty(X)$ is a $G_\delta$-subset of $(SB(X),\tau)$.
(In fact, the paper \cite{GS18} deals only with the case $X=C(2^\omega)$ but the generalization to any separable Banach space is easy.)
\end{remark}

A certain connection between codings $SB(X)$ and $\PP$ of separable Banach spaces might be deduced already from \cite{GS18}.

\begin{theorem}\label{thm:reductionFromSBToP}
Let $X$ be an isometrically universal separable Banach space and let $\tau$ be an admissible topology on $SB(X)$. Then there is a continuous mapping $\Phi:(SB(X),\tau)\to\PP$ such that for every $F\in SB(X)$ we have $F\equiv X_{\Phi(F)}$.
\end{theorem}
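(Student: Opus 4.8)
The plan is to build $\Phi(F)$ by picking, in a Borel (indeed continuous) way, a countable dense $\Rat$-linear skeleton inside each $F\in SB(X)$ and transporting its norm to $V$. Fix once and for all a sequence of Borel selectors $d_n:SB(X)\to X$ such that $\{d_n(F)\setsep n\in\Nat\}$ is dense in $F$ for every $F$; such selectors exist by the Kuratowski–Ryll-Nardzewski theorem for the Effros-Borel structure, and on an admissible topology they may in fact be chosen so that each $d_n$ is continuous (this is one of the standard features of admissible topologies, cf. \cite{GS18}). Since $X$ is isometrically universal, each separable Banach space embeds into $X$, so nothing is lost by working inside $X$. The idea is then to use these selectors to define, for a fixed enumeration $(w_k)_{k\in\Nat}$ of $V$ with rational coefficients, the values $\Phi(F)(w_k)$ as norms in $X$ of the corresponding rational combinations of the $d_n(F)$'s.

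The one subtlety is that the map $n\mapsto d_n(F)$ need not be injective, so the assignment $e_n\mapsto d_n(F)$ need not be well defined as a linear map on $V$; this is precisely why the target is $\PP$ and not $\B$. To handle it cleanly, first I would fix an enumeration $V=\{v_0,v_1,v_2,\dots\}$ and define, by recursion on $k$, a function $F\mapsto \xi_k(F)\in X$ together with the pseudonorm values, setting $\Phi(F)(v):=$ the $X$-norm of the element obtained by substituting $d_n(F)$ for $e_n$ in $v$. Concretely, for $v=\sum_{i\in I}q_i e_i\in V$ put
\[
\Phi(F)(v)\ :=\ \Bigl\|\sum_{i\in I} q_i\, d_i(F)\Bigr\|_X .
\]
This is manifestly a pseudonorm on $V$ for each fixed $F$ (it is the pullback of the genuine norm on $X$ under the $\Rat$-linear map $e_i\mapsto d_i(F)$), hence $\Phi(F)\in\PP$. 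Moreover $X_{\Phi(F)}$ is exactly the completion of the $\Rat$-linear span of $\{d_i(F)\setsep i\in\Nat\}$ modulo vectors of norm zero, which is the closed linear span of a dense subset of $F$, i.e. $X_{\Phi(F)}\equiv F$. This gives the required property $F\equiv X_{\Phi(F)}$.

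It remains to check continuity of $\Phi:(SB(X),\tau)\to\PP$. By Definition~\ref{def:basic} it suffices to show $\Phi^{-1}(U[v,I])$ is $\tau$-open for every $v\in V$ and every open interval $I$; but $\Phi^{-1}(U[v,I])=\{F: \|\sum_{i\in I_v} q_i d_i(F)\|_X\in I\}$, which is the preimage of the open set $I$ under the map $F\mapsto \|\sum_{i\in I_v} q_i d_i(F)\|_X$, a composition of the (continuous, in an admissible topology) selectors $d_i$ with the continuous operations of scalar multiplication, finite summation, and the norm on $X$. Hence $\Phi$ is continuous. The main obstacle in writing this out carefully is justifying the continuity of the selectors $d_n$ on an admissible topology — one has to invoke the axioms of admissibility (in particular condition (i), that $E^+(U)$ is open) to run a Kuratowski–Ryll-Nardzewski-style selection that outputs continuous rather than merely Borel maps, or alternatively to argue that Borel selectors suffice together with the fact that the resulting $\Phi$ can then be upgraded using that $E^+(U)\setminus E^+(V)$ generates the topology; either way this bookkeeping, rather than any conceptual difficulty, is where the work lies.
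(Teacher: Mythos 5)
Your proposal is correct and is essentially the paper's own proof: the paper invokes \cite[Theorem 4.1]{GS18} to obtain continuous maps $f_n:SB(X)\to X$ with $\overline{\{f_n(F)\}}=F$, defines $\Phi(F)(\sum a_n e_n)=\|\sum a_n f_n(F)\|_X$ exactly as you do, and checks continuity the same way. The only point you leave slightly open --- whether the dense selectors can be taken continuous rather than merely Borel --- is precisely what that cited theorem supplies, so no further argument is needed.
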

\begin{proof}
By \cite[Theorem 4.1]{GS18}, there are continuous functions $(f_n)_{n\in\Nat}$ on $SB(X)$ with values in $X$ such that for each $F\in SB(X)$ we have $\overline{\{f_n(F)\setsep n\in\Nat\}} = F$. Consider the mapping $\Phi$ given by $\Phi(F)(\sum_{n=1}^k a_n e_n):=\|\sum_{n=1}^k a_nf_n(F)\|_X$ for every $F\in SB(X)$ and $a_1,\ldots,a_k\in\Rat$. Then it is easy to see that $\Phi$ is the mapping we need.
\end{proof}

The following relation between various codings of Banach spaces as $SB(X)$ is easy.

\begin{observation}
Let $X,Y$ be isometrically universal separable Banach spaces and let $\tau_1$ and $\tau_2$ be admissible topologies on $SB(X)$ and $SB(Y)$, respectively. Then there is a $\boldsymbol{\Sigma}_2^0$-measurable mapping $f:(SB(X),\tau_1)\to (SB(Y),\tau_2)$ such that for every $F\in SB(X)$ we have $F\equiv f(F)$. Moreover, $f$ can be chosen such that for every open set $U\subseteq Y$ there is an open set $V\subseteq X$ such that $f^{-1}(E^+(U)) = E^+(V)$.
\end{observation}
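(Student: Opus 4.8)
The plan is to avoid composing Theorem~\ref{thm:reductionFromSBToP} with a reduction from $\PP$ back to $SB(Y)$ (which would only yield a higher Borel class), and instead to transport each $F$ into $Y$ by a single fixed isometric embedding. First I would use that $Y$ is isometrically universal and $X$ is separable to fix an isometric linear embedding $\iota\colon X\to Y$; since $X$ is a Banach space, $\iota(X)$ is complete, hence closed in $Y$, and likewise $\iota(F)$ is a closed linear subspace of $Y$ for every $F\in SB(X)$. So I would define $f(F):=\iota(F)$. Then $f(F)\in SB(Y)$, and $\iota$ restricts to a surjective linear isometry of $F$ onto $f(F)$, so $F\equiv f(F)$.

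The ``moreover'' clause is then immediate. For an open set $U\subseteq Y$ one has $\iota(F)\cap U\neq\emptyset$ if and only if $F\cap\iota^{-1}(U)\neq\emptyset$, so
\[
f^{-1}(E^+(U))=\{F\in SB(X)\setsep F\cap\iota^{-1}(U)\neq\emptyset\}=E^+(\iota^{-1}(U)),
\]
and $V:=\iota^{-1}(U)$ is open in $X$ because $\iota$ is continuous.

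To see that $f$ is $\boldsymbol{\Sigma}_2^0$-measurable, I would invoke admissibility of $\tau_2$: there is a subbasis of $\tau_2$ whose members are countable unions of sets of the form $E^+(U)\setminus E^+(V)$ with $U,V$ open in $Y$. By the previous paragraph, $f^{-1}(E^+(U)\setminus E^+(V))=E^+(\iota^{-1}(U))\setminus E^+(\iota^{-1}(V))$, and since $\tau_1$ is admissible both $E^+(\iota^{-1}(U))$ and $E^+(\iota^{-1}(V))$ are $\tau_1$-open; hence this difference is the intersection of a $\tau_1$-open set with a $\tau_1$-closed set, so it lies in $\boldsymbol{\Sigma}_2^0$. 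Taking countable unions shows that the $f$-preimage of each subbasic set is $\boldsymbol{\Sigma}_2^0$; since $(SB(Y),\tau_2)$ is second countable and $\boldsymbol{\Sigma}_2^0$ is closed under finite intersections and countable unions, the $f$-preimage of an arbitrary $\tau_2$-open set is $\boldsymbol{\Sigma}_2^0$ as well.

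There is no real difficulty in this argument; the only point that needs care is that $f$ is typically \emph{not} continuous — the sets $E^+(U)\setminus E^+(V)$ pull back to differences of open sets, not to open sets — so one must keep track of the Borel class as one passes from subbasic sets to basic sets to all open sets, using the stability of $\boldsymbol{\Sigma}_2^0$ under finite intersections and countable unions.
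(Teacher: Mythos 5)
Your proof is correct and follows essentially the same route as the paper: the paper also fixes a (not necessarily surjective) linear isometry $j:X\to Y$, sets $f(F):=j(F)$, and notes $f^{-1}(E^+(U))=E^+(j^{-1}(U))$, with the $\boldsymbol{\Sigma}_2^0$-measurability then following from the admissibility of the topologies (via Lemma~\ref{lem:condForSigmaDvaMeasurable}). Your write-up merely spells out the measurability bookkeeping that the paper leaves implicit.
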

\begin{proof}
Let $j:X\to Y$ be an isometry (not necessarily surjective). Then the mapping $f$ given by $f(F):= j(F)$, $F\in SB(X)$, does the job, because $f^{-1}(E^+(U)) = E^+(j^{-1}(U))$ for every open set $U\subseteq Y$.
\end{proof}

Let us note the following easy fact which we record here for a later reference. The proof is easy and so it is omitted.

\begin{lemma}\label{lem:condForSigmaDvaMeasurable}
Let $X$ be an isometrically universal separable Banach space, $\tau$ be an admissible topology on $SB(X)$, $Y$ be a Polish space, $f:Y\to SB(X)$ be a mapping and $n\in\Nat$, $n\geq 2$, be such that $f^{-1}(E^+(U))$ is a $\boldsymbol{\Delta}_n^0$ set in $Y$ for every open set $U\subseteq X$. Then $f$ is $\boldsymbol{\Sigma}_n^0$-measurable.
\end{lemma}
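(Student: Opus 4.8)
The plan is to verify directly that $f^{-1}(W)\in\boldsymbol{\Sigma}^0_n$ for every $\tau$-open set $W\subseteq SB(X)$, exploiting the special form of the admissible subbasis together with the closure properties of the pointclasses $\boldsymbol{\Sigma}^0_n$ and $\boldsymbol{\Delta}^0_n$.

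First I would reduce the problem to subbasic sets. Since $\boldsymbol{\Sigma}^0_n$ is closed under finite intersections and countable unions, and since every $\tau$-open set is a countable union of finite intersections of members of a fixed subbasis of $\tau$, it is enough to check that $f^{-1}(S)\in\boldsymbol{\Sigma}^0_n$ whenever $S$ belongs to the particular subbasis of $\tau$ furnished by condition~(ii) of the definition of an admissible topology.

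Then I would fix such an $S$ and use condition~(ii) to write $S=\bigcup_{k\in\Nat}\big(E^+(U_k)\setminus E^+(V_k)\big)$ for suitable open sets $U_k,V_k\subseteq X$. Consequently
\[
f^{-1}(S)=\bigcup_{k\in\Nat}\Big(f^{-1}\big(E^+(U_k)\big)\cap\big(Y\setminus f^{-1}\big(E^+(V_k)\big)\big)\Big).
\]
By assumption every $f^{-1}(E^+(U_k))$ and every $f^{-1}(E^+(V_k))$ lies in $\boldsymbol{\Delta}^0_n$, and $\boldsymbol{\Delta}^0_n$ is closed under complements and finite intersections, so each term of the displayed union lies in $\boldsymbol{\Delta}^0_n\subseteq\boldsymbol{\Sigma}^0_n$; taking the countable union, $f^{-1}(S)\in\boldsymbol{\Sigma}^0_n$. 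This yields the claim.

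I do not expect a genuine obstacle here: the argument is pure bookkeeping with the elementary stability properties of the Borel pointclasses and with the definition of admissibility. The only mild subtlety is the passage from a subbasis to all open sets, which is legitimate precisely because $\boldsymbol{\Sigma}^0_n$ is closed under the relevant Boolean operations; note in fact that the argument already works for any $n\ge 1$, so the standing hypothesis $n\ge 2$ is more than sufficient.
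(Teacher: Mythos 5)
Your argument is correct and is exactly the routine verification the authors had in mind when they wrote that the proof ``is easy and so it is omitted'': reduce to the subbasis from condition~(ii) of admissibility (using that the Polish topology $\tau$ is second countable, hence hereditarily Lindel\"of, so every open set is a countable union of finite intersections of subbasic sets), and then use that $\boldsymbol{\Delta}^0_n$ is an algebra and $\boldsymbol{\Sigma}^0_n$ is closed under countable unions and finite intersections. Your observation that the hypothesis $n\ge 2$ is not actually needed for this argument is also accurate.
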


A straightforward idea leads to the following relation between $\PP_\infty$ and $\B$.

\begin{proposition}\label{prop:reductionFromPInftyToB}
There is a $\boldsymbol{\Sigma}_2^0$-measurable mapping $\Phi:\PP_\infty\to \B$ such that for every $\mu\in\PP_\infty$ we have $X_\mu\equiv X_{\Phi(\mu)}$.

Moreover, $\Phi$ can be chosen such that $\Phi^{-1}(U[v,I])\in \Delta_2^0(\PP_\infty)$ for each $v\in V$ and each open interval $I$.
\end{proposition}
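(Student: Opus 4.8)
The plan is to build $\Phi(\mu)$ by "thinning out" the canonical basis of $V$: given $\mu \in \PP_\infty$, we want to select an infinite subset $\{k(n) : n \in \Nat\} \subseteq \Nat$ of indices such that the vectors $e_{k(1)}, e_{k(2)}, \dots$ are linearly independent in $X_\mu$ and their closed span is all of $X_\mu$, and then define $\Phi(\mu)$ to be the pseudonorm on $V$ obtained by transporting $\mu \restriction \Span\{e_{k(n)} : n\}$ back along the identification $e_n \mapsto e_{k(n)}$. Since the chosen vectors are linearly independent in $X_\mu$, the resulting pseudonorm is an actual norm on $V$, so $\Phi(\mu) \in \B$, and by construction $X_{\Phi(\mu)} \equiv \overline{\Span}\{e_{k(n)} : n\} = X_\mu$ (the last equality because $X_\mu$ is separable and infinite-dimensional, so one can always extract a linearly independent total sequence from a countable dense-span generating set — here from $(e_n)_n$ itself). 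The key point is to make the selection $\mu \mapsto (k(n))_n$ depend on $\mu$ in a sufficiently simple Borel way.

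First I would fix, for each $\mu$, a canonical greedy choice of the indices: let $k(1)$ be the least $i$ with $\mu(e_i) > 0$, and having chosen $k(1) < \dots < k(n)$, let $k(n+1)$ be the least $i > k(n)$ such that $e_i \notin \Span\{e_{k(1)},\dots,e_{k(n)}\}$ in $X_\mu$, equivalently such that $\{e_{k(1)},\dots,e_{k(n)},e_i\}$ is linearly independent in $X_\mu$. By the "in particular" part of Lemma~\ref{lem:infiniteDimIsGDelta}, the set of $\mu$ for which a fixed finite tuple of $e_i$'s is linearly independent in $X_\mu$ is open; hence each event "$k(1),\dots,k(n)$ take prescribed values" is a finite Boolean combination of open sets, so it is $\boldsymbol{\Delta}_2^0$ (indeed a difference of open sets). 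I would check that on $\PP_\infty$ this greedy process never terminates: if it did, then $X_\mu$ would be finite-dimensional, contradicting $\mu \in \PP_\infty$. Thus the maps $\mu \mapsto k(n)$ are well-defined on $\PP_\infty$ and each is $\boldsymbol{\Delta}_2^0$-measurable (each level set is $\boldsymbol{\Delta}_2^0$, and there are countably many values).

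Next I would verify the complexity of $\Phi$. For a basic subbasic open set $U[v, I]$ with $v = \sum_{j=1}^m a_j e_j \in V$ and $I$ an open interval, we have $\Phi(\mu) \in U[v,I]$ iff $\mu\bigl(\sum_{j=1}^m a_j e_{k(j)}\bigr) \in I$. Partitioning $\PP_\infty$ according to the (countably many) possible values of the finite tuple $(k(1),\dots,k(m))$ — each piece being $\boldsymbol{\Delta}_2^0$ as noted — on each such piece the condition becomes $\mu(w) \in I$ for a fixed $w \in V$, i.e. intersection with the open set $U[w,I]$. A countable union of ($\boldsymbol{\Delta}_2^0 \cap$ open) sets is $\boldsymbol{\Sigma}_2^0$; in fact, since the pieces partition $\PP_\infty$, one sees $\Phi^{-1}(U[v,I])$ is simultaneously $\boldsymbol{\Sigma}_2^0$ and $\boldsymbol{\Pi}_2^0$, hence $\boldsymbol{\Delta}_2^0$, giving the "moreover" clause. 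Since the $U[v,I]$ form a subbasis, $\boldsymbol{\Sigma}_2^0$-measurability of $\Phi$ follows (a preimage of an arbitrary open set is a countable union of finite intersections of $\boldsymbol{\Sigma}_2^0$ sets, hence $\boldsymbol{\Sigma}_2^0$ — one uses here that $\boldsymbol{\Sigma}_2^0$ is closed under finite intersections and countable unions).

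The main obstacle I anticipate is purely bookkeeping: one must be careful that the greedy rule "$e_i$ independent of the previously chosen vectors in $X_\mu$" really is expressible via the open sets of Lemma~\ref{lem:infiniteDimIsGDelta} — this is fine because linear independence of a finite set of the $e_i$'s in $X_\mu$ is exactly the statement that $(\mu, e_{i_1}, \dots, e_{i_r}) \approximates[\ell_1^r]{K}$ for some rational $K > 1$, which is an open condition in $\mu$. A secondary subtlety is ensuring $\Phi(\mu) \in \B$ rather than merely $\PP_\infty$: but $\Phi(\mu)$ is, by the transport construction, a norm on $V$ precisely because $e_{k(1)}, e_{k(2)}, \dots$ were chosen to be linearly independent in $X_\mu$, so $e_1, e_2, \dots$ are linearly independent in $X_{\Phi(\mu)}$. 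No genuinely hard analysis is involved; the content is organizing the Borel computation so that everything stays at level~$2$.
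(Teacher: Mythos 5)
Your proposal is correct and follows essentially the same route as the paper: the same greedy extraction of indices $k(1)<k(2)<\dots$ via Lemma~\ref{lem:infiniteDimIsGDelta}, the observation that each level set of the tuple $(k(1),\dots,k(m))$ is a Boolean combination of open sets and hence $\boldsymbol{\Delta}_2^0$, and the two-sided computation showing $\Phi^{-1}(U[v,I])$ is both $\boldsymbol{\Sigma}_2^0$ and $\boldsymbol{\Pi}_2^0$. The only cosmetic difference is that you obtain the $\boldsymbol{\Pi}_2^0$ bound by noting the level sets partition $\PP_\infty$ and taking complements, whereas the paper writes the complementary condition out explicitly as a countable intersection of $\boldsymbol{\Delta}_2^0$ sets; these are the same argument.
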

\begin{proof}
For each $\mu\in\PP_\infty$ let us inductively define natural numbers $(n_k(\mu))_{k\in\Nat}$ by
\[\begin{split}
    n_1(\mu) & :=\min\{n\in\Nat\setsep \mu(e_n)\neq 0\},\\
    n_{k+1}(\mu) & :=\min\{n\in\Nat\setsep e_{n_1(\mu)},\ldots,e_{n_k(\mu)}, e_n\text{ are linearly independent}\}.
\end{split}
\]
Consider the mapping $\Phi$ given by $\Phi(\mu)(\sum_{i=1}^k a_n e_n):=\mu(\sum_{i=1}^k a_ie_{n_i(\mu)})$ for every $\mu\in \PP_\infty$ and $a_1,\ldots,a_k\in\Rat$. It is easy to see that $\Phi(\mu)\in \B$ and that $X_\mu$ is isometric to $X_{\Phi(\mu)}$ for each $\mu\in\PP_\infty$.

For all natural numbers $N_1<\ldots<N_k$ the set $\{\mu\in\PP_\infty\setsep n_1(\mu)=N_1,\ldots,n_k(\mu)=N_k\}$ is a $\Delta_2^0$ set in $\PP_\infty$. Indeed, we may prove it by induction on $k$ because for each $k\in\Nat$ and each $\mu\in\PP_\infty$ we have that $n_1(\mu)=N_1,\ldots,n_{k+1}(\mu)=N_{k+1}$ iff
\[\begin{split}
n_1(\mu) & =N_1, \ldots, n_k(\mu)=N_{k}\quad\&\\
&\forall n=N_k+1,\ldots,N_{k+1}-1: e_{N_1},\ldots,e_{N_k},e_n\text{ are linearly dependent}\\
& \&\quad e_{N_1},\ldots,e_{N_{k+1}}\text{ are linearly independent},
\end{split}\]
which is an intersection of a $\Delta_2^0$-condition (by the inductive assumption) with a closed and an open condition (by Lemma~\ref{lem:infiniteDimIsGDelta}).

Let us pick $v = \sum_{i=1}^k a_n e_n\in V$ and an open interval $I$. Then
\[
\Phi^{-1}(U[v,I]) = \{\mu\in\PP_\infty\setsep \mu(\sum_{i=1}^k a_ie_{n_i(\mu)})\in I\},
\]
which is a $\boldsymbol{\Delta}_2^0$ set in $\PP_\infty$. Indeed, on one hand we have $\mu\in\Phi^{-1}(U[v,I])$ iff there are natural numbers $N_1<N_2<\ldots<N_k$ such that $n_1(\mu)=N_1,\ldots,n_k(\mu)=N_k$ and $\mu(\sum_{i=1}^k a_i e_{N_i})\in I$, which witnesses that $\Phi^{-1}(U[v,I])\in\boldsymbol{\Sigma}_2^0(\PP_\infty)$ as it is a countable union of $\Delta_2^0$ sets. On the other hand, we have that $\mu\in\Phi^{-1}(U[v,I])$ iff for each $l\in\Nat$ we have that either $n_k(\mu) > l$ or there are natural numbers $N_1<N_2<\ldots<N_k\leq l$ such that $n_1(\mu)=N_1,\ldots,n_k(\mu)=N_k$ and $\mu(\sum_{i=1}^k a_i e_{N_i})\in I$, which witnesses that $\Phi^{-1}(U[v,I])\in\boldsymbol{\Pi}_2^0(\PP_\infty)$ as it is a countable intersection of $\Delta_2^0$ sets.

This proves the ``Moreover'' part from which it easily follows that $\Phi$ is $\boldsymbol{\Sigma}_2^0$-measurable.
\end{proof}

\begin{remark}\label{rem:finiteDim}
For $d\in\Nat$, let us consider the sets $\PP_d:=\{\mu\in\PP\setsep \dim X_\mu = d\}$ and
\[
    \B_d:=\{\mu\in\PP\setsep e_1,\ldots,e_d \text{ is a basis of $X_\mu$ and $\mu(e_i)=0$ for every $i>d$}\}.   
\]
A similar argument as in Proposition~\ref{prop:reductionFromPInftyToB} shows that for every $d\in\Nat$ there is a $\boldsymbol{\Sigma}_2^0$-measurable mapping $\Phi:\PP_d\to \B_d$ such that for every $\mu\in\PP_d$ we have $X_\mu\equiv X_{\Phi(\mu)}$.
\end{remark}

Finally let us consider the reduction from $\PP$ to $SB(X)$. An optimal result would be to have a $\boldsymbol{\Sigma}_2^0$-reduction. This is because, as was already observed in \cite{GS18}, the identity map between two admissible topologies is only $\Sigma^0_2$-measurable in general. Using the ideas of the proof of \cite[Lemma 2.4]{Ku18} we obtain Theorem~\ref{thm:reductionFromPToSBWorseVersion}. This result is improved in the next subsection, see Theorem~\ref{thm:reductionFromPToSB}, but since some steps remain the same, let us give a sketch of the argument (we will be a bit sketchy at the places which will be modified later).

\begin{lemma}\label{lem:sufficientForReduction}
Let $n\in\Nat$, $X$ be an isometrically universal separable Banach space and let $\tau$ be an admissible topology on $SB(X)$. Let there exist $\boldsymbol{\Sigma}_n^0$-measurable mappings $\chi_{k} : \mathcal{B} \to X$, $k\in\Nat$, such that $X_\mu\equiv \overline{\Span}\{\chi_{k}(\mu)\setsep k\in\Nat\}$ for every $\mu\in\B$.

Then there exists a $\boldsymbol{\Sigma}_{n+1}^0$-measurable mapping $\Phi:\B\to (SB(X),\tau)$ such that for every $\mu\in\B$ we have $X_\mu\equiv \Phi(\mu)$.
\end{lemma}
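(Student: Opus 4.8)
The plan is to take the evident candidate for $\Phi$, namely
$$\Phi(\mu):=\overline{\Span}\{\chi_k(\mu)\setsep k\in\Nat\},\qquad \mu\in\B.$$
By construction $\Phi(\mu)$ is a closed linear subspace of $X$, so $\Phi(\mu)\in SB(X)$, and the hypothesis on the $\chi_k$ immediately gives $X_\mu\equiv\Phi(\mu)$ for every $\mu\in\B$. Thus the whole content of the lemma is the measurability assertion, for which I would appeal to Lemma~\ref{lem:condForSigmaDvaMeasurable} applied with $n+1$ in place of $n$ (legitimate, as $n+1\geq 2$): it suffices to verify that $\Phi^{-1}(E^+(U))\in\boldsymbol{\Delta}^0_{n+1}(\B)$ for every open set $U\subseteq X$.

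First I would unwind $\Phi^{-1}(E^+(U))$. Since $U$ is open and $\qeSpan\{\chi_k(\mu)\setsep k\in\Nat\}$ is dense in $\Phi(\mu)$, the subspace $\Phi(\mu)$ meets $U$ if and only if $\qeSpan\{\chi_k(\mu)\setsep k\in\Nat\}$ meets $U$, that is, if and only if there are $m\in\Nat$ and rationals $a_1,\dots,a_m$ with $\sum_{i=1}^m a_i\chi_i(\mu)\in U$. Hence
$$\Phi^{-1}(E^+(U))=\bigcup_{m\in\Nat}\ \bigcup_{(a_1,\dots,a_m)\in\Rat^m}\bigl\{\mu\in\B\setsep \sum_{i=1}^m a_i\chi_i(\mu)\in U\bigr\},$$
which is a countable union. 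For fixed $m$ and fixed $a_1,\dots,a_m\in\Rat$, the map $\mu\mapsto\sum_{i=1}^m a_i\chi_i(\mu)$ from $\B$ into $X$ is $\boldsymbol{\Sigma}^0_n$-measurable: each $\chi_i$ is so by assumption, multiplication by a fixed scalar is a homeomorphism of $X$, and for a sum $f+g$ of $\boldsymbol{\Sigma}^0_n$-measurable maps into $X$ one has $(f+g)^{-1}(U)=\bigcup\{f^{-1}(B)\cap g^{-1}(B')\setsep B,B'\text{ basic open in }X,\ B+B'\subseteq U\}$, which lies in $\boldsymbol{\Sigma}^0_n$ because $\boldsymbol{\Sigma}^0_n$ is closed under countable unions and finite intersections. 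Therefore each set in the displayed union is $\boldsymbol{\Sigma}^0_n$, so $\Phi^{-1}(E^+(U))\in\boldsymbol{\Sigma}^0_n\subseteq\boldsymbol{\Delta}^0_{n+1}$, and Lemma~\ref{lem:condForSigmaDvaMeasurable} yields that $\Phi$ is $\boldsymbol{\Sigma}^0_{n+1}$-measurable.

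I do not expect a genuine obstacle here: this is essentially a bookkeeping argument. The only points requiring a little care are the routine fact that finite linear combinations preserve $\boldsymbol{\Sigma}^0_n$-measurability, and the (unavoidable) one-level loss $\boldsymbol{\Sigma}^0_n\rightsquigarrow\boldsymbol{\Delta}^0_{n+1}$, which is precisely what produces the ``$n+1$'' in the conclusion and mirrors the fact that the identity map between two admissible topologies is in general only $\boldsymbol{\Sigma}^0_2$-measurable. The real work lies elsewhere — in producing the maps $\chi_k$ themselves with $n$ as small as possible, ideally $n=1$ so that $\Phi$ comes out $\boldsymbol{\Sigma}^0_2$-measurable (the optimal outcome) — and that is what the next subsection is devoted to.
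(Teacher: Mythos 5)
Your proposal is correct and follows exactly the paper's argument: the same map $\Phi(\mu)=\overline{\Span}\{\chi_k(\mu)\setsep k\in\Nat\}$, verification that $\Phi^{-1}(E^+(U))\in\boldsymbol{\Sigma}^0_n$ (which the paper leaves as ``easy to see'' and you spell out via rational linear combinations), and then an appeal to Lemma~\ref{lem:condForSigmaDvaMeasurable}. No issues.
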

\begin{proof}
Consider the mapping $\Phi:\B\to (SB(X),\tau)$ defined as \[\Phi(\nu):=\overline{\Span}\{\chi_{k}(\nu)\setsep k\in\Nat\},\;\;\; \nu\in\B.\]
We have $X_{\nu}\equiv \Phi(\nu)$. For every open set $U\subseteq X$, using the $\boldsymbol{\Sigma}_n^0$-measurability of $\chi_{k}$'s, it is easy to see that $\Phi^{-1}(E^+(U))$ is a $\boldsymbol{\Sigma}_n^0$-set in $\B$. Thus, by Lemma~\ref{lem:condForSigmaDvaMeasurable}, the mapping $\Phi$ is $\boldsymbol{\Sigma}_{n+1}^0$-measurable.
\end{proof}

\begin{remark}
Similarly as in Remark~\ref{rem:finiteDim}, an analogous approach leads to a similar statement valid for any $\B_d$, $d\in\Nat$, instead of $\B$.
\end{remark}

\begin{theorem}\label{thm:reductionFromPToSBWorseVersion}
Let $X$ be an isometrically universal separable Banach space and let $\tau$ be an admissible topology on $SB(X)$. Then there is a $\boldsymbol{\Sigma}_4^0$-measurable mapping $\Phi:\PP\to (SB(X),\tau)$ such that for every $\mu\in\PP$ we have $X_\mu\equiv \Phi(\mu)$.
\end{theorem}
\begin{proof}[Sketch of the proof]By Remark~\ref{rem:finiteDim}, it suffices to find, for every $d\in\Nat\cup\{\infty\}$, a $\boldsymbol{\Sigma}_3^0$-measurable reduction from $\B_d$ to $SB(X)$, where $\B_\infty = \B$. This is done for every $d\in\Nat\cup\{\infty\}$ in a similar way. Let us concentrate further only on the case of $d=\infty$, the other cases are similar. From the proof of \cite[Lemma 2.4]{Ku18}, it follows that there are Borel measurable mappings $\chi_{k} : \mathcal{B} \to X$, $k\in\Nat$, such that $X_\mu\equiv \closedSpan{\chi_{k}(\mu)\setsep k\in\Nat}$ for every $\mu\in\B$. A careful inspection of the proof actually shows that the mappings $\chi_k$ are $\boldsymbol{\Sigma}_2^0$-measurable (since this part is improved in the next subsection, see Proposition~\ref{propspojvnor}, we do not give any more details here). Thus, an application of Lemma~\ref{lem:sufficientForReduction} finishes the proof.
\end{proof}

\subsection{An optimal reduction from $\mathcal B$ to $SB(X)$}
The last subsection is devoted to the proof of the following result. The rest of the paper does not depend on it.
\begin{theorem}\label{thm:reductionFromBToSB}
Let $X$ be an isometrically universal separable Banach space and let $\tau$ be an admissible topology on $SB(X)$. Then there is a $\boldsymbol{\Sigma}_2^0$-measurable mapping $\Phi:\B\to (SB(X),\tau)$ such that for every $\mu\in\B$ we have $X_\mu\equiv \Phi(\mu)$.
\end{theorem}

The main ingredient of the proof is the following.
\begin{proposition} \label{propspojvnor}
	For any isometrically universal separable Banach space $ X $, there exist continuous mappings $ \chi_{k} : \mathcal{B} \to X, k \in \mathbb{N}, $ such that
	$$ \Big\Vert \sum_{k=1}^{n} a_{k} \chi_{k}(\nu) \Big\Vert = \nu \Big( \sum_{k=1}^{n} a_{k} e_{k} \Big) $$
	for every $ \sum_{k=1}^{n} a_{k} e_{k} \in c_{00} $ and every $ \nu \in \mathcal{B} $.
\end{proposition}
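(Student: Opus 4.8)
The plan is to build the maps $\chi_k$ by a step-by-step approximation argument inside $X$, exploiting isometric universality to embed finite-dimensional pieces and then patching them together continuously in $\nu$. We fix once and for all a dense sequence $(d_j)_{j\in\Nat}$ in $X$, and an enumeration $(w_m)_{m\in\Nat}$ of the countable dense set $\qeSpan\{e_k\setsep k\in\Nat\}$ of $V$. For a given $\nu\in\B$ the target is a sequence $(\chi_k(\nu))_k$ in $X$ that is an isometric copy of the canonical basis of $X_\nu$; the crucial point is that the whole sequence must depend continuously on $\nu$ when $X$ carries its norm topology and $\B$ carries the topology inherited from $\R^V$, i.e. the topology of pointwise convergence of $\nu$ on finitely many vectors.

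\textbf{Key steps.} First I would set up an inductive scheme producing, for each $n$, a continuous map $\nu\mapsto(x_1^n(\nu),\dots,x_n^n(\nu))\in X^n$ such that $(X,x_1^n(\nu),\dots,x_n^n(\nu))\approximates[(X_\nu,e_1,\dots,e_n)]{1+2^{-n}}$ and, simultaneously, $\|x_k^{n+1}(\nu)-x_k^n(\nu)\|<2^{-n}$ for $k\le n$, so that $\chi_k(\nu):=\lim_n x_k^n(\nu)$ exists, is continuous (uniform limit of continuous maps), and the limiting system is an exact isometry, giving $\|\sum_{k=1}^n a_k\chi_k(\nu)\|=\nu(\sum_{k=1}^n a_k e_k)$. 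The inductive step has two substeps: (a) \emph{approximate embedding depending only on finite data}: since $\B\ni\nu\mapsto(\nu(w_1),\dots,\nu(w_N))$ is continuous, one covers the relevant open set of parameter values by (countably many) basic open sets on each of which a single finite-dimensional embedding of $(\Span\{e_1,\dots,e_{n+1}\},\nu)$ into $X$ works up to the required tolerance — here isometric universality of $X$ is used to produce, for each rational target configuration, an exact isometric copy in $X$, and Lemma~\ref{lem:approx} together with Lemma~\ref{lem:approx2} controls the distortion when $\nu$ moves slightly; (b) \emph{gluing}: use a locally finite partition of unity subordinate to that open cover (paracompactness of the Polish space $\B$) to average the finitely many candidate embeddings into one continuous map, then correct the small error so that one stays within $1+2^{-(n+1)}$ of an isometry and within $2^{-n}$ of the previous stage. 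Step~(b) is where one must be careful that convex combinations of near-isometries in a Banach space are still near-isometries, which is true because the set of $(1+\varepsilon)$-isomorphic configurations $\mathcal N((e_i)_i,1+\varepsilon,(v_i)_i)$ is open (Lemma~\ref{lem:infiniteDimIsGDelta}) but not convex, so one instead keeps the configuration close, in norm, to a fixed exact isometric copy and uses Lemma~\ref{lem:approx2}\ref{it:approxIsometryOnBasis} to convert proximity on basis vectors into distortion control.

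\textbf{Main obstacle.} I expect the genuine difficulty to be substep~(b): arranging that the chosen finite-dimensional isometric copies on overlapping pieces of the cover are close enough to each other (in the Banach--Mazur / norm sense) that the partition-of-unity average is still a good embedding, rather than degenerating. This is delicate because different basic open sets in $\B$ impose constraints on $\nu$ only through finitely many rational combinations, and a priori the ``locally optimal'' isometric copies one writes down on two overlapping pieces need not be comparable; the fix is to carry along, as part of the inductive hypothesis, not merely an approximate embedding but an honest element of $X^{n}$ together with a bound of the form $\|x_k^n(\nu)-x_k^n(\nu')\|<\omega_n(\text{dist}(\nu,\nu'))$ for a modulus $\omega_n$ controlled by how finely the cover refines the data $(\nu(w_1),\dots,\nu(w_N))$, and to choose each successive cover fine enough relative to the previous stage's modulus. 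Once this bookkeeping is set up, the passage to the limit and the verification that $\|\sum a_k\chi_k(\nu)\| = \nu(\sum a_k e_k)$ for all $\sum a_k e_k\in c_{00}$ (first for rational coefficients, then by continuity of both sides for real ones) is routine. One then obtains Theorem~\ref{thm:reductionFromBToSB} by feeding these continuous $\chi_k$ into Lemma~\ref{lem:sufficientForReduction} with $n=1$.
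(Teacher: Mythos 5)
There is a genuine gap, and it sits exactly where you suspected: in substep~(b). The difficulty is not one of moduli of continuity but of \emph{selection}. On two overlapping basic open sets $U_i$ and $U_j$ of the cover, the previously constructed vectors $x_1^n(\nu),\dots,x_n^n(\nu)$ can indeed be kept close by refining the cover, but the \emph{new} vector $y_{n+1}$ produced on $U_i$ and the one produced on $U_j$ are chosen by an existence statement (universality, or a Gurari\u{\i}-type extension property) that in no way makes the choice canonical: two admissible extensions of the same configuration can be at distance $\approx 2$ from each other (already for $\R\subseteq\ell_\infty^2$ one may send the new basis vector to $(1,-1)$ or to $(-1,1)$). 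Refining the cover does not remove this discrepancy, and the set of admissible new vectors $\{y\in X: (x_1^n(\nu),\dots,x_n^n(\nu),y)\ \text{is a}\ (1+\delta)\text{-copy of}\ (\Span\{e_1,\dots,e_{n+1}\},\nu)\}$ is open but \emph{not convex} (its norm is pinched near $1$), so the partition-of-unity average of two admissible candidates can degenerate; this is precisely the hypothesis failure that blocks a Michael-type selection argument, and your proposed bookkeeping $\|x_k^n(\nu)-x_k^n(\nu')\|<\omega_n(\operatorname{dist}(\nu,\nu'))$ does not address it. A secondary gap: your inductive step needs an \emph{approximate extension inside the fixed space} $X$ of an already-placed finite configuration, and for a general isometrically universal $X$ this pointwise extension property can fail — it essentially characterizes the Gurari\u{\i} space (Definition~\ref{def:Gurarii}); this part is repairable by first replacing $X$ with an isometric copy of $\Gurarii$ inside it, but the selection obstruction above remains.

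The paper takes a genuinely different route precisely to avoid selecting embeddings into a fixed $X$. It first proves (Proposition~\ref{propspojvnorcharakt}) that the existence of continuous $\chi_k$ is \emph{equivalent} to the existence of a continuous field of Hahn--Banach extensions: continuous functions $\alpha_k$ vanishing on the diagonal such that every functional $z^*$ dominated by a given $\nu$ extends to a family $(\Gamma(\mu))_{\mu}$ with $|\Gamma(\mu)|\le\mu$ and $|\Gamma(\mu)(e_k)-\Gamma(\lambda)(e_k)|\le\alpha_k(\mu,\lambda)$. The target space is then built abstractly as a quotient of $c_{00}(\mathcal{B}_{(1)}\times\Nat)$ by the Minkowski functional of a convex hull that glues all the unit balls of $(c_{00},\mu)$ along the small vectors $e_{\mu,k}-e_{\lambda,k}$; the canonical images $e_{\nu,k}$ are then automatically continuous in $\nu$, and the Hahn--Banach field is exactly what certifies that the quotient norm does not shrink below $\nu$. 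The field itself is produced by the recursive sup/inf construction of Lemma~\ref{lemmspojvnor}, and only at the very end is the resulting separable space embedded isometrically into $X$. If you want to rescue a direct construction inside $X$, you would have to find a continuous (not merely Borel) selection of extensions — making the choice canonical by, say, taking the first suitable rational candidate only yields a $\boldsymbol{\Sigma}^0_3$-measurable reduction, which is exactly the weaker result the authors attribute to the earlier approach of \cite{Ku18}.
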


\begin{remark}\label{rem:finiteDimProspojvnor}
Similarly as in Remark~\ref{rem:finiteDim}, we may easily obtain a variant of Proposition~\ref{propspojvnor} for $\B_d$, $d\in\Nat$. Indeed, let $d\in\Nat$ be given. For $\nu\in\B_d$, let us define $\widetilde{\nu}\in\B$ by
\[\widetilde{\nu} \Big( \sum_{i=1}^\infty a_ie_i \Big) :=\nu \Big( \sum_{i=1}^d a_ie_i \Big) + \sum_{i=d+1}^\infty |a_i|,\quad \sum_{i=1}^\infty a_ie_i\in c_{00}.\]
If $ \chi_{k}$, $k\in\Nat,$ are as in Proposition~\ref{propspojvnor}, then we may consider mappings $ \widetilde{\chi}_{k}:\B_d\to X$, $k\leq d,$ defined by $\widetilde{\chi}_{k}(\nu) = \chi_{k}(\widetilde{\nu}) $, $ \nu \in \mathcal{B}_{d} $.
\end{remark}

We postpone the proof of Proposition~\ref{propspojvnor} to the very end of this subsection. 
\begin{proof}[Proof of Theorem~\ref{thm:reductionFromBToSB}]
Follows immediately from Lemma~\ref{lem:sufficientForReduction} and Proposition~\ref{propspojvnor}.
\end{proof}

Similarly as above, we obtain also the following.
\begin{theorem}\label{thm:reductionFromPToSB}
Let $X$ be an isometrically universal separable Banach space and let $\tau$ be an admissible topology on $SB(X)$. Then there is a $\boldsymbol{\Sigma}_3^0$-measurable mapping $\Phi:\PP\to (SB(X),\tau)$ such that for every $\mu\in\PP$ we have $X_\mu\equiv \Phi(\mu)$.
\end{theorem}
\begin{proof}
This is similar to the proof of Theorem~\ref{thm:reductionFromPToSBWorseVersion}, the only modification is that we use Proposition~\ref{propspojvnor} and Remark~\ref{rem:finiteDimProspojvnor} instead of the reference to the proof of \cite[Lemma 2.4]{Ku18}.
\end{proof}

The aim of the remainder of this subsection is now to prove Proposition~\ref{propspojvnor}.
Its proof is based on two auxiliary results, namely Proposition~\ref{propspojvnorcharakt} and Lemma~\ref{lemmspojvnor}, the first of them is a reformulation of our task. Although we will use only the implication $ (2') \Rightarrow (1) $ for the set of norms with $ \nu(e_{k}) = 1 $, we prove a bit more and keep the formulation for a general set of (pseudo)norms. The implication $ (1) \Rightarrow (2) $, which is not obligatory for us, is a simple application of the Hahn-Banach theorem, and we think that it is appropriate to include the proof.

In fact, we do not know if the conditions hold for the set of all pseudonorms (or, equivalently, for the set of pseudonorms with $ \nu(e_{k}) \leq 1 $). So, it is open if Proposition~\ref{propspojvnor} holds not only for $ \mathcal{B} $, but even for $ \mathcal{P} $.

\begin{proposition} \label{propspojvnorcharakt}
	For each $ \mathcal{A} \subseteq \mathcal{P} $, the following conditions are equivalent:

	(1) There are a separable Banach space $ U $ and continuous mappings $ \chi_{k} : \mathcal{A} \to U, k \in \mathbb{N}, $ such that
	$$ \Big\Vert \sum_{k=1}^{n} a_{k} \chi_{k}(\nu) \Big\Vert = \nu \Big( \sum_{k=1}^{n} a_{k} e_{k} \Big) $$
	for every $ \sum_{k=1}^{n} a_{k} e_{k} \in c_{00} $ and every $ \nu \in \mathcal{A} $.

	(2) There are continuous functions $ \alpha_{k} : \mathcal{A}^{2} \to [0, \infty), k \in \mathbb{N}, $ such that
	$$ \alpha_{k}(\nu, \nu) = 0 $$
	for every $ \nu $ and $ k $ and the following property is satisfied: If $ \nu \in \mathcal{A} $ and $ z^{*} \in (c_{00})^{\#} $ satisfy $ |z^{*}(x)| \leq \nu(x) $ for every $ x \in c_{00} $, then there is a mapping $ \Gamma : \mathcal{A} \to (c_{00})^{\#} $ such that $ \Gamma(\nu) = z^{*} $, $ |\Gamma(\mu)(x)| \leq \mu(x) $ for every $ \mu \in \mathcal{A} $ and $ x \in c_{00} $, and
	$$ |\Gamma(\mu)(e_{k}) - \Gamma(\lambda)(e_{k})| \leq \alpha_{k}(\mu, \lambda) $$
	for every $ \mu, \lambda \in \mathcal{A} $ and every $ k $.

	Moreover, if $ \mathcal{A} $ consists only of pseudonorms $ \nu $ with $ \nu(e_{k}) \leq 1 $ for every $ k $, then these conditions are equivalent with:

	(2') For every $ \eta \in [0, 1) $, there are continuous functions $ \beta_{k} : \mathcal{A}^{2} \to [0, \infty), k \in \mathbb{N}, $ such that
	$$ \beta_{k}(\nu, \nu) = 0 $$
	for every $ \nu $ and $ k $ and the following property is satisfied: If $ \nu \in \mathcal{A} $ and $ z^{*} \in (c_{00})^{\#} $ satisfy $ |z^{*}(x)| \leq \nu(x) $ for every $ x \in c_{00} $, then there is a mapping $ \Gamma : \mathcal{A} \to (c_{00})^{\#} $ such that $ \Gamma(\nu) = \eta \cdot z^{*} $, $ |\Gamma(\mu)(x)| \leq \mu(x) $ for every $ \mu \in \mathcal{A} $ and $ x \in c_{00} $, and
	$$ |\Gamma(\mu)(e_{k}) - \Gamma(\lambda)(e_{k})| \leq \beta_{k}(\mu, \lambda) $$
	for every $ \mu, \lambda \in \mathcal{A} $ and every $ k $.
\end{proposition}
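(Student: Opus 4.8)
The plan is to treat this as a genuine equivalence between a ``global'' statement (the existence of continuous selectors $\chi_k$ realizing every $\nu\in\mathcal{A}$ isometrically in one fixed space $U$) and a ``one-functional-at-a-time'' extension statement (2), and then to derive the variant (2') by a simple scaling trick. The direction $(1)\Rightarrow(2)$ should be the easy one: given the $\chi_k$'s, a functional $z^*$ on $(c_{00})^\#$ dominated by $\nu$ corresponds, via the isometric embedding $\sum a_k e_k\mapsto\sum a_k\chi_k(\nu)$, to a norm-one-or-less functional on the subspace $\overline{\Span}\{\chi_k(\nu)\}\subseteq U$; extend it by Hahn--Banach to a functional $\zeta\in U^*$ with $\|\zeta\|\le 1$, and then \emph{define} $\Gamma(\mu)$ to be the pullback of $\zeta$ along $\mu$, i.e. $\Gamma(\mu)(\sum a_k e_k):=\zeta(\sum a_k\chi_k(\mu))$. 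Then $|\Gamma(\mu)(x)|\le\mu(x)$ is automatic, $\Gamma(\nu)=z^*$, and $|\Gamma(\mu)(e_k)-\Gamma(\lambda)(e_k)|=|\zeta(\chi_k(\mu)-\chi_k(\lambda))|\le\|\chi_k(\mu)-\chi_k(\lambda)\|$, so one puts $\alpha_k(\mu,\lambda):=\|\chi_k(\mu)-\chi_k(\lambda)\|$, which is continuous and vanishes on the diagonal. (One must be slightly careful: $\zeta$ depends on $z^*$, but the bound $\alpha_k$ does not, as required.)

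The substantive direction is $(2)\Rightarrow(1)$, and this is where I expect the main work to lie. The idea is to build $U$ concretely as (a subspace of) a space of functions on $\mathcal{A}$, or more precisely to manufacture, for each $k$, a continuous map $\chi_k:\mathcal{A}\to\ell_\infty(D)$ for a suitable countable dense ``dual-like'' index set $D$, and to verify that $\|\sum a_k\chi_k(\nu)\|=\nu(\sum a_k e_k)$. Concretely: since $\mathcal{A}$ is separable metrizable, fix a countable dense set $\{\nu_j\}\subseteq\mathcal{A}$ and, for each $j$ and each rational-coefficient vector $x_0$, a functional $z^*_{j,x_0}\in(c_{00})^\#$ with $|z^*_{j,x_0}|\le\nu_j$ that nearly norms $x_0$ with respect to $\nu_j$; apply (2) to each such pair to get a selector $\Gamma_{j,x_0}:\mathcal{A}\to(c_{00})^\#$ with the uniform modulus-of-continuity bounds $\alpha_k$. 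Now set $\chi_k(\nu):=\big(\Gamma_{j,x_0}(\nu)(e_k)\big)_{j,x_0}\in\ell_\infty$. The bound $|\Gamma(\mu)(e_k)-\Gamma(\lambda)(e_k)|\le\alpha_k(\mu,\lambda)$ gives $\|\chi_k(\mu)-\chi_k(\lambda)\|_\infty\le\alpha_k(\mu,\lambda)$, hence continuity of each $\chi_k$. For the isometry: for fixed $\nu$ and $x=\sum a_k e_k$, the coordinate $(j,x_0)$ of $\sum a_k\chi_k(\nu)$ equals $\Gamma_{j,x_0}(\nu)(x)$, whose absolute value is $\le\nu(x)$ by the domination property, so $\|\sum a_k\chi_k(\nu)\|_\infty\le\nu(x)$; conversely, choosing $\nu_j$ close to $\nu$ and $x_0$ close to $x$ (with rational coefficients), the functional $\Gamma_{j,x_0}(\nu)$ is close to $z^*_{j,x_0}$, which nearly norms $x_0\approx x$ against $\nu_j\approx\nu$, so the supremum is $\ge\nu(x)-\varepsilon$ for every $\varepsilon$. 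The delicate bookkeeping here is the two layers of approximation (replacing $\nu$ by a dense $\nu_j$ \emph{and} replacing $x$ by a dense rational $x_0$) while keeping the norming estimate under control; this, together with checking that the resulting $U:=\overline{\Span}\{\chi_k(\nu):\nu\in\mathcal{A},k\in\Nat\}$ is separable, is the main obstacle.

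For the ``Moreover'' part, assuming $\nu(e_k)\le 1$ for all $\nu\in\mathcal{A}$, $k$, one shows $(2)\Leftrightarrow(2')$. The implication $(2)\Rightarrow(2')$ is immediate: given $z^*$ dominated by $\nu$, the functional $\eta\cdot z^*$ is also dominated by $\nu$ (as $\eta\in[0,1)$), so apply (2) with $\eta z^*$ in place of $z^*$ and set $\beta_k:=\alpha_k$. For $(2')\Rightarrow(2)$ one needs to recover a selector extending $z^*$ exactly, not $\eta z^*$; the plan is to take a sequence $\eta_m\uparrow 1$, get selectors $\Gamma_m$ with $\Gamma_m(\nu)=\eta_m z^*$ and uniform moduli $\beta_k^{(m)}$, and pass to a limit. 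Since the $\Gamma_m(\mu)$ are all dominated by $\mu$, for each fixed $\mu$ and $k$ the scalars $\Gamma_m(\mu)(e_k)$ lie in $[-\mu(e_k),\mu(e_k)]\subseteq[-1,1]$, so by a diagonal argument over the countable set $\{e_k\}$ (and density) one can extract a pointwise-convergent subsequence, define $\Gamma(\mu)$ as the limit, check it is still dominated by $\mu$, equals $z^*$ at $\nu$, and that its increments are controlled by $\alpha_k:=\liminf_m\beta_k^{(m)}$ — here one needs the $\beta_k^{(m)}$ to be taken with a common modulus so that this $\liminf$ is still continuous and zero on the diagonal, which is the only mildly technical point in this last step.
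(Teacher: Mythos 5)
Your treatment of $(1)\Rightarrow(2)$ coincides with the paper's (pull back a Hahn--Banach extension $\zeta\in U^*$ along the maps $x\mapsto\sum a_k\chi_k(\mu)$ and set $\alpha_k(\mu,\lambda)=\|\chi_k(\mu)-\chi_k(\lambda)\|$), and $(2)\Rightarrow(2')$ is the same one-line scaling. Your $(2)\Rightarrow(1)$ takes a genuinely different but workable route: you realize $U$ inside $\ell_\infty(D)$ by using countably many selectors $\Gamma_{j,x_0}$ (indexed by a dense set of norms and of rational vectors, with $z^*_{j,x_0}$ norming $x_0$ against $\nu_j$) as coordinates, whereas the paper builds $U$ as a quotient of $c_{00}(\mathcal{A}\times\mathbb{N})$ by the Minkowski functional of an explicit convex set $\Omega$ and uses the selectors only to verify the lower bound $\nu(x)\le\varrho(\overline{x})$. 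Your ``sup of functionals'' construction and the paper's ``largest dominated pseudonorm'' construction are dual to one another; the approximation bookkeeping you flag (choosing $x_0$ first, then $\nu_j$ in a basic neighbourhood controlling $\nu_j(e_k)$ on the relevant finite support, and using continuity of $\alpha_k$ together with $\alpha_k(\nu,\nu)=0$) does go through.

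The genuine gap is in $(2')\Rightarrow(2)$. You propose to take $\eta_m\uparrow 1$, obtain selectors $\Gamma_m$ with $\Gamma_m(\nu)=\eta_m z^*$ and moduli $\beta_k^{(m)}$, and pass to a pointwise limit, remarking that one ``needs the $\beta_k^{(m)}$ to be taken with a common modulus'' and calling this mildly technical. But condition $(2')$ gives, for each $\eta$, \emph{some} continuous $\beta_k$ vanishing on the diagonal, and nothing forces these to be uniform in $\eta$: as $\eta\to 1$ the available slack for the extension shrinks and the optimal moduli may degenerate (compare $\beta^{(m)}=m\cdot d$, each continuous and null on the diagonal, with $\sup_m\beta^{(m)}\equiv\infty$ off it). Without equicontinuity you cannot extract a subsequence converging at every point of the (typically uncountable) set $\mathcal{A}$ from convergence on a countable dense subset; an ultrafilter limit does define $\Gamma(\mu)$ everywhere, but the resulting bound $\alpha_k=\lim_{\mathcal U}\beta_k^{(m)}$ (or $\liminf_m\beta_k^{(m)}$) is in general neither continuous nor continuous at the diagonal, and continuity of the $\alpha_k$ is precisely what $(2)$ demands — it is what makes the $\chi_k$ continuous in $(1)$. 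This is the heart of the implication, not a technicality: the paper circumvents the absence of a common modulus by truncating, setting $\alpha(\mu,\lambda)=\max_{n,k}\min\{\beta_k^n(\mu,\lambda),2^{-\max\{n,k\}}\}$, and gluing the $\Gamma^n$ with a partition of unity $f_n(\alpha(\mu,\nu))$ so that near $\nu$ only large-$n$ selectors (whose value at $\nu$ is close to $z^*$) contribute, after which a case analysis yields $|\Gamma(\mu)(e_k)-\Gamma(\lambda)(e_k)|\le c_k\,\alpha(\mu,\lambda)$. Some device of this kind is needed; the limit argument as you describe it does not close.
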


\begin{remark}
	The conditions (1) and (2) from Proposition~\ref{propspojvnorcharakt} are equivalent also with the following one:

	\emph{(3) There are continuous functions $ \alpha_{k} : \mathcal{A}^{2} \to [0, \infty), k \in \mathbb{N}, $ such that
		$$ \alpha_{k}(\nu, \nu) = 0 $$
		for every $ \nu $ and $ k $ and
		$$ \sum_{\mu, \lambda, k} |a_{\mu, \lambda, k}| \alpha_{k}(\mu, \lambda) \geq \nu \Big( \sum_{\lambda, k} (a_{\nu, \lambda, k} - a_{\lambda, \nu, k}) e_{k} \Big) - \sum_{\mu \neq \nu} \mu \Big( \sum_{\lambda, k} (a_{\mu, \lambda, k} - a_{\lambda, \mu, k}) e_{k} \Big) $$
		for every $ \nu \in \mathcal{A} $ and every system $ (a_{\mu, \lambda, k})_{\mu, \lambda \in \mathcal{A}, k \in \mathbb{N}} $ of real numbers with finite support.}
		
		The proof is similar to the proof of $(1)\Leftrightarrow(2)$ below (for $(1)\Rightarrow (3)$ the choice of $\alpha_k$'s is the same as in the proof of $(1)\Rightarrow (2)$, for $(3)\Rightarrow (1)$ the construction of the space $U$ is the same as in $(2)\Rightarrow (1)$). We omit the full proof, because the details are technical and we do not use condition $(3)$ any further. Let us note that even though we tried to find an application of condition $(3)$, we did not find it and this is basically the reason why we had to develop conditions $(2)$ and $(2')$.
\end{remark}

\begin{proof}[Proof of Proposition~\ref{propspojvnorcharakt}]
	(1) $ \Rightarrow $ (2): Given such $ U $ and $ \chi_{k} : \mathcal{A} \to U, k \in \mathbb{N} $, we put
	$$ \alpha_{k}(\nu, \mu) = \Vert \chi_{k}(\nu) - \chi_{k}(\mu) \Vert, \quad \nu, \mu \in \mathcal{A}, k \in \mathbb{N}. $$
	Denote by $ I_{\mu} : (c_{00}, \mu) \to U $ the isometry given by $ e_{k} \mapsto \chi_{k}(\mu) $. Let $ \nu \in \mathcal{A} $ and $ z^{*} \in (c_{00})^{\#} $ satisfying $ |z^{*}(x)| \leq \nu(x) $ be given. For $ x, y \in c_{00} $ with $ I_{\nu}x = I_{\nu}y $, we have $ |z^{*}(y - x)| \leq \nu(y - x) = \Vert I_{\nu}(y - x) \Vert = 0 $, and so $ z^{*}(x) = z^{*}(y) $. Thus, the formula
	$$ u^{*}(I_{\nu}x) = z^{*}(x), \quad x \in c_{00}, $$
	defines a functional on $ I_{\nu}(c_{00}) $ such that $ |u^{*}(I_{\nu}x)| = |z^{*}(x)| \leq \nu(x) = \Vert I_{\nu}x \Vert $. By the Hahn-Banach theorem, we can extend $ u^{*} $ to the whole $ U $ in the way that
	$$ |u^{*}(u)| \leq \Vert u \Vert, \quad u \in U. $$
	For every $ \mu \in \mathcal{A} $, let us put
	$$ \Gamma(\mu)(x) = u^{*}(I_{\mu}x), \quad x \in c_{00}. $$
	We obtain $ |\Gamma(\mu)(x)| = |u^{*}(I_{\mu}x)| \leq \Vert I_{\mu}x \Vert = \mu(x) $ and $ |\Gamma(\mu)(e_{k}) - \Gamma(\lambda)(e_{k})| = |u^{*}(I_{\mu}e_{k}) - u^{*}(I_{\lambda}e_{k})| = |u^{*}(\chi_{k}(\mu) - \chi_{k}(\lambda))| \leq \Vert \chi_{k}(\mu) - \chi_{k}(\lambda) \Vert = \alpha_{k}(\mu, \lambda) $ for $ \mu, \lambda \in \mathcal{A} $ and $ k \in \mathbb{N} $.

	(2) $ \Rightarrow $ (1): Given such $ \alpha_{k} : \mathcal{A}^{2} \to [0, \infty), k \in \mathbb{N} $, we define a subset of $ c_{00}(\mathcal{A} \times \mathbb{N}) $ by
	$$ \Omega = \mathrm{co} \bigg( \bigcup_{\mu} \Big\{ \sum_{k} a_{k} e_{\mu, k} : \mu \Big( \sum_{k} a_{k} e_{k} \Big) \leq 1 \Big\} \cup \bigcup_{\mu, \lambda, k} \big\{ c \cdot (e_{\mu, k} - e_{\lambda, k}) : |c| \cdot \alpha_{k}(\mu, \lambda) \leq 1 \big\} \bigg), $$
	and denote the corresponding Minkowski functional by $ \varrho $. Let $ U $ be the completion of the quotient space $ X/N $, where $ X = (c_{00}(\mathcal{A} \times \mathbb{N}), \varrho) $ and $ N = \{ x \in c_{00}(\mathcal{A} \times \mathbb{N}) : \rho(x) = 0 \} $. In what follows, we identify every $ x \in c_{00}(\mathcal{A} \times \mathbb{N}) $ with its equivalence class $ [x]_{N} \in U $. Let us define
	$$ \chi_{k} : \mathcal{A} \to U, \quad \nu \mapsto e_{\nu, k}. $$
	As $ c \cdot (e_{\mu, k} - e_{\lambda, k}) \in \Omega $ whenever $ |c| \cdot \alpha_{k}(\mu, \lambda) \leq 1 $, we obtain $ \varrho(e_{\mu, k} - e_{\lambda, k}) \leq \alpha_{k}(\mu, \lambda) $, that is, $ \varrho(\chi_{k}(\mu) - \chi_{k}(\lambda)) \leq \alpha_{k}(\mu, \lambda) $. For a fixed $ \mu $, we have $ \alpha_{k}(\mu, \lambda) \to \alpha_{k}(\mu, \mu) = 0 $ as $ \lambda \to \mu $, and consequently $ \varrho(\chi_{k}(\mu) - \chi_{k}(\lambda)) \to 0 $ as $ \lambda \to \mu $. Therefore, $ \chi_{k} $ is continuous on $ \mathcal{A} $. It follows that the image of $ \chi_{k} $ is separable. As these images contain all basic vectors $ e_{\nu, k} $, the space $ U $ is separable.

	We need to show that
	$$ \nu(x) = \varrho(\overline{x}) $$
	for fixed $ \nu \in \mathcal{A} $, $ x = \sum_{k \in \mathbb{N}} a_{k} e_{k} \in c_{00} $ and its image $ \overline{x} = \sum_{k \in \mathbb{N}} a_{k} e_{\nu, k} $. The inequality $ \nu(x) \geq \varrho(\overline{x}) $ follows immediately from the definition of $ \Omega $ (for any $ c \geq \nu(x) $ with $ c > 0 $, we have $ \nu(\frac{1}{c}x) \leq 1 $, and so $ \frac{1}{c}\overline{x} \in \Omega $, hence $ \varrho(\frac{1}{c}\overline{x}) \leq 1 $ and $ \varrho(\overline{x}) \leq c $). Let us show the opposite inequality $ \nu(x) \leq \varrho(\overline{x}) $. Using the Hahn-Banach theorem, we can pick $ z^{*} \in (c_{00})^{\#} $ satisfying $ z^{*}(x) = \nu(x) $ and $ |z^{*}(y)| \leq \nu(y) $ for every $ y \in c_{00} $. 	Let $ \Gamma : \mathcal{A} \to (c_{00})^{\#} $ be the mapping provided for $ \nu $ and $ z^{*} $, and let $ u^{*} \in (c_{00}(\mathcal{A} \times \mathbb{N}))^{\#} $ be given by
	$$ u^{*}(e_{\mu, k}) = \Gamma(\mu)(e_{k}), \quad \mu \in \mathcal{A}, k \in \mathbb{N}. $$
	Then $ u^{*}(\overline{x}) = u^{*}(\sum_{k} a_{k} e_{\nu, k}) = \sum_{k} a_{k} u^{*}(e_{\nu, k}) = \sum_{k} a_{k} \Gamma(\nu)(e_{k}) = \Gamma(\nu)(\sum_{k} a_{k} e_{k}) = z^{*}(x) = \nu(x) $. It is sufficient to show that $ u^{*} \leq 1 $ on $ \Omega $ (equivalently $ |u^{*}(y)| \leq \varrho(y) $ for every $ y \in c_{00}(\mathcal{A} \times \mathbb{N}) $), since it follows that $ \nu(x) = u^{*}(\overline{x}) \leq \varrho(\overline{x}) $.

	To show that $ u^{*} \leq 1 $ on $ \Omega $, we need to check that
	$$ \mu \Big( \sum_{k} b_{k} e_{k} \Big) \leq 1 \quad \Rightarrow \quad u^{*} \Big( \sum_{k} b_{k} e_{\mu, k} \Big) \leq 1 $$
	and
	$$ |c| \cdot \alpha_{k}(\mu, \lambda) \leq 1 \quad \Rightarrow \quad u^{*} \big( c \cdot (e_{\mu, k} - e_{\lambda, k}) \big) \leq 1. $$
	Concerning the first implication, we compute $ u^{*}(\sum_{k} b_{k} e_{\mu, k}) = \sum_{k} b_{k} u^{*}(e_{\mu, k}) = \sum_{k} b_{k} \Gamma(\mu)(e_{k}) = \Gamma(\mu)(\sum_{k} b_{k} e_{k}) \leq \mu (\sum_{k} b_{k} e_{k}) \leq 1 $. Concerning the second implication, we compute $ u^{*}(c \cdot (e_{\mu, k} - e_{\lambda, k})) = c u^{*}(e_{\mu, k}) - c u^{*}(e_{\lambda, k}) = c \Gamma(\mu)(e_{k}) - c \Gamma(\lambda)(e_{k}) \leq |c| \alpha_{k}(\mu, \lambda) \leq 1 $.

(2) $ \Rightarrow $ (2'): The choice $ \beta_{k} = \alpha_{k} $ works. Indeed, if $ \Gamma $ is provided by (2), we can take $ \eta \cdot \Gamma $.

(2') $ \Rightarrow $ (2): For every $ n \in \mathbb{N} $, let $ \beta_{k}^{n} : \mathcal{A}^{2} \to [0, \infty), k \in \mathbb{N}, $ be provided by (2') for $ \eta = (1 - 2^{-n}) $. We can assume that each $ \beta_{k}^{n} $ is a pseudometric. Indeed, instead of $ \beta_{k}^{n} $, we can take the maximal minorizing pseudometric $ \widetilde{\beta}^{n}_{k} $ (in such a case, $ \widetilde{\beta}^{n}_{k} $ is continuous and, since the function $ (\mu, \lambda) \mapsto |\Gamma(\mu)(e_{k}) - \Gamma(\lambda)(e_{k})| $ is a pseudometric, if it minorizes $ \beta_{k}^{n} $, then it minorizes $ \widetilde{\beta}^{n}_{k} $ as well). Moreover, we can assume that $ \beta_{1}^{1} $ is a metric (it is possible to add a compatible metric on $ \mathcal{A} $ to $ \beta_{1}^{1} $).

Let us define
$$ \alpha(\mu, \lambda) = \max_{n, k} \min \{ \beta_{k}^{n}(\mu, \lambda), 2^{- \max\{n, k\}} \}, \quad \mu, \lambda \in \mathcal{A}. $$
It is easy to check that $ \alpha $ is continuous. Due to our additional assumptions, $ \alpha $ is a metric on $ \mathcal{A} $. We want to show that there are some constants $ c_{k} $ such that the choice $ \alpha_{k} = c_{k} \cdot \alpha $ works.

Let $ \nu \in \mathcal{A} $ and $ z^{*} \in (c_{00})^{\#} $ satisfy $ |z^{*}(x)| \leq \nu(x) $ for every $ x \in c_{00} $. For every $ n \in \mathbb{N} $, there is a mapping $ \Gamma^{n} : \mathcal{A} \to (c_{00})^{\#} $ such that
$$ \Gamma^{n}(\nu) = (1 - 2^{-n}) \cdot z^{*}, $$
$$ |\Gamma^{n}(\mu)(x)| \leq \mu(x), \quad \mu \in \mathcal{A}, \; x \in c_{00}, $$
and, if we denote
$$ \gamma_{k}^{n}(\mu) = \Gamma^{n}(\mu)(e_{k}), $$
then
$$ |\gamma_{k}^{n}(\mu) - \gamma_{k}^{n}(\lambda)| \leq \beta_{k}^{n}(\mu, \lambda) $$
for every $ \mu, \lambda \in \mathcal{A} $ and every $ k $. Let us note that
$$ |\gamma_{k}^{n}(\mu)| \leq 1, $$
as $ |\gamma_{k}^{n}(\mu)| = |\Gamma^{n}(\mu)(e_{k})| \leq \mu(e_{k}) \leq 1 $ by the assumption on $ \mathcal{A} $.

Now, we define the desired mapping $ \Gamma $. For practical purposes, we first define
$$ \gamma_{k}^{n}(\mu) = 0 \quad \textnormal{for } n \in \mathbb{Z}, n \leq 0. $$
For every $ n \in \mathbb{Z} $, let $ f_{n} $ denote the piecewise linear function supported by $ [2^{-n-3}, 2^{-n-1}] $ which is linear on $ [2^{-n-3}, 2^{-n-2}] $ and $ [2^{-n-2}, 2^{-n-1}] $, and for which $ f_{n}(2^{-n-2}) = 1 $. In this way, we have $ \sum_{n \in \mathbb{Z}} f_{n} = 1 $ on $ (0, \infty) $. We define
$$ \gamma_{k}(\nu) = z^{*}(e_{k}) $$
and
$$ \gamma_{k}(\mu) = \sum_{n \in \mathbb{Z}} f_{n}(\alpha(\mu, \nu)) \gamma_{k}^{n}(\mu), \quad \mu \neq \nu. $$
Finally, we put $ \Gamma(\mu)(e_{k}) = \gamma_{k}(\mu) $, so $ \Gamma(\nu) = z^{*} $ and $ \Gamma(\mu) = \sum_{n \in \mathbb{N}} f_{n}(\alpha(\mu, \nu)) \Gamma^{n}(\mu) $ for $ \mu \neq \nu $. In both cases $ \mu = \nu $ and $ \mu \neq \nu $, it follows that $ |\Gamma(\mu)(x)| \leq \mu(x) $ for every $ x \in c_{00} $. It remains to prove the inequality
$$ |\gamma_{k}(\mu) - \gamma_{k}(\lambda)| \leq c_{k} \cdot \alpha(\mu, \lambda) $$
for some suitable constants $ c_{k} $.

Let us show that the implication
\begin{equation} \label{impl1}
\alpha(\mu, \lambda) < 2^{-n} \quad \Rightarrow \quad |\gamma_{k}^{n}(\mu) - \gamma_{k}^{n}(\lambda)| \leq 2^{k+1} \alpha(\mu, \lambda)
\end{equation}
holds. Clearly, we can suppose that $ n \geq 1 $. If $ \alpha(\mu, \lambda) \geq 2^{-k} $, then $ 2^{k+1} \alpha(\mu, \lambda) \geq 2 \geq |\gamma_{k}^{n}(\mu) - \gamma_{k}^{n}(\lambda)| $. So, let us assume that $ \alpha(\mu, \lambda) < 2^{-k} $. Since $ \min \{ \beta_{k}^{n}(\mu, \lambda), 2^{- \max\{n, k\}} \} \leq \alpha(\mu, \lambda) < 2^{- \max\{n, k\}} $, we have $ \min \{ \beta_{k}^{n}(\mu, \lambda), 2^{- \max\{n, k\}} \} = \beta_{k}^{n}(\mu, \lambda) $, and so $ |\gamma_{k}^{n}(\mu) - \gamma_{k}^{n}(\lambda)| \leq \beta_{k}^{n}(\mu, \lambda) = \min \{ \beta_{k}^{n}(\mu, \lambda), 2^{- \max\{n, k\}} \} \leq \alpha(\mu, \lambda) \leq 2^{k+1} \alpha(\mu, \lambda) $.

Next, we show that
\begin{equation} \label{impl2}
2^{-n-4} \leq \alpha(\mu, \nu) < 2^{-n} \quad \Rightarrow \quad |\gamma_{k}^{n}(\mu) - \gamma_{k}(\nu)| \leq (2^{k+1} + 16) \alpha(\mu, \nu).
\end{equation}
If $ n \geq 1 $, then $ \gamma_{k}(\nu) - \gamma_{k}^{n}(\nu) = z^{*}(e_{k}) - (1 - 2^{-n}) z^{*}(e_{k}) = 2^{-n} z^{*}(e_{k}) $. If $ n \leq 0 $, then $ \gamma_{k}(\nu) - \gamma_{k}^{n}(\nu) = z^{*}(e_{k}) $. In both cases, $ |\gamma_{k}(\nu) - \gamma_{k}^{n}(\nu)| \leq 2^{-n} |z^{*}(e_{k})| \leq 2^{-n} \nu(e_{k}) \leq 2^{-n} \leq 2^{4} \alpha(\mu, \nu) $. Using \eqref{impl1}, we can compute
$$ |\gamma_{k}^{n}(\mu) - \gamma_{k}(\nu)| \leq |\gamma_{k}^{n}(\mu) - \gamma_{k}^{n}(\nu)| + |\gamma_{k}^{n}(\nu) - \gamma_{k}(\nu)| \leq (2^{k+1} + 2^{4}) \alpha(\mu, \nu). $$

Further, it follows from \eqref{impl2} that
\begin{equation} \label{ineq3}
|\gamma_{k}(\mu) - \gamma_{k}(\nu)| \leq (2^{k+1} + 16) \alpha(\mu, \nu).
\end{equation}
Indeed, since $ f_{n} $ is supported by $ [2^{-n-3}, 2^{-n-1}] $, we have always
$$ f_{n}(\alpha(\mu, \nu)) |\gamma_{k}^{n}(\mu) - \gamma_{k}(\nu)| \leq f_{n}(\alpha(\mu, \nu)) (2^{k+1} + 16) \alpha(\mu, \nu), $$
and it is sufficient to use that $ \gamma_{k}(\mu) - \gamma_{k}(\nu) = \sum_{n \in \mathbb{Z}} f_{n}(\alpha(\mu, \nu)) (\gamma_{k}^{n}(\mu) - \gamma_{k}(\nu)) $ for $ \mu \neq \nu $.

Now, we are going to investigate the value $ |\gamma_{k}(\mu) - \gamma_{k}(\lambda)| $. First, we have
\begin{equation} \label{impl4}
\alpha(\lambda, \nu) \geq 2 \alpha(\mu, \nu) \quad \Rightarrow \quad |\gamma_{k}(\mu) - \gamma_{k}(\lambda)| \leq 3 \cdot (2^{k+1} + 16) \alpha(\mu, \lambda).
\end{equation}
Indeed, as $ \alpha(\mu, \lambda) \geq \alpha(\lambda, \nu) - \alpha(\mu, \nu) \geq 2 \alpha(\mu, \nu) - \alpha(\mu, \nu) = \alpha(\mu, \nu) $, we can apply \eqref{ineq3} and write
\begin{align*}
|\gamma_{k}(\mu) - \gamma_{k}(\lambda)| & \leq |\gamma_{k}(\mu) - \gamma_{k}(\nu)| + |\gamma_{k}(\lambda) - \gamma_{k}(\nu)| \\
 & \leq (2^{k+1} + 16) (\alpha(\mu, \nu) + \alpha(\lambda, \nu)) \\
 & = (2^{k+1} + 16) (\alpha(\lambda, \nu) - \alpha(\mu, \nu) + 2 \alpha(\mu, \nu)) \\
 & \leq (2^{k+1} + 16)(1 + 2) \alpha(\mu, \lambda).
\end{align*}

Now, we prove the last but the most challenging implication
\begin{equation} \label{impl5}
\alpha(\mu, \nu) \leq \alpha(\lambda, \nu) < 2 \alpha(\mu, \nu) \quad \Rightarrow \quad |\gamma_{k}(\mu) - \gamma_{k}(\lambda)| \leq [12 (2^{k+1} + 16) + 2^{k+1}] \alpha(\mu, \lambda).
\end{equation}

Let us compute
\begin{align*}
 & \gamma_{k}(\mu) - \gamma_{k}(\lambda) = \sum_{n \in \mathbb{Z}} \big[ f_{n}(\alpha(\mu, \nu)) \gamma_{k}^{n}(\mu) - f_{n}(\alpha(\lambda, \nu)) \gamma_{k}^{n}(\lambda) \big] \\
 & = \sum_{n \in \mathbb{Z}} \big[ f_{n}(\alpha(\mu, \nu)) \gamma_{k}^{n}(\mu) - f_{n}(\alpha(\lambda, \nu)) \gamma_{k}^{n}(\mu) + f_{n}(\alpha(\lambda, \nu)) \gamma_{k}^{n}(\mu) - f_{n}(\alpha(\lambda, \nu)) \gamma_{k}^{n}(\lambda) \big] \\
 & = \sum_{n \in \mathbb{Z}} [f_{n}(\alpha(\mu, \nu)) - f_{n}(\alpha(\lambda, \nu))] \gamma_{k}^{n}(\mu) + \sum_{n \in \mathbb{Z}} f_{n}(\alpha(\lambda, \nu)) [\gamma_{k}^{n}(\mu) - \gamma_{k}^{n}(\lambda)] \\
 & = \sum_{n \in \mathbb{Z}} [f_{n}(\alpha(\mu, \nu)) - f_{n}(\alpha(\lambda, \nu))] (\gamma_{k}^{n}(\mu) - \gamma_{k}(\nu)) + \sum_{n \in \mathbb{Z}} f_{n}(\alpha(\lambda, \nu)) [\gamma_{k}^{n}(\mu) - \gamma_{k}^{n}(\lambda)].
\end{align*}
Hence, $ |\gamma_{k}(\mu) - \gamma_{k}(\lambda)| $ is less than or equal to
$$ \sum_{n \in \mathbb{Z}} |f_{n}(\alpha(\mu, \nu)) - f_{n}(\alpha(\lambda, \nu))| |\gamma_{k}^{n}(\mu) - \gamma_{k}(\nu)| + \sum_{n \in \mathbb{Z}} f_{n}(\alpha(\lambda, \nu)) |\gamma_{k}^{n}(\mu) - \gamma_{k}^{n}(\lambda)|. $$

Let us notice that
\begin{itemize}
\item $ f_{n}(\alpha(\mu, \nu)) \neq 0 $ iff $ 2^{-n-3} < \alpha(\mu, \nu) < 2^{-n-1} $,
\item $ f_{n}(\alpha(\lambda, \nu)) \neq 0 $ iff $ 2^{-n-3} < \alpha(\lambda, \nu) < 2^{-n-1} $, and $ 2^{-n-4} < \alpha(\mu, \nu) < 2^{-n-1} $ in this case,
\item the function $ f_{n} $ is Lipschitz with the constant $ 2^{n+3} $.
\end{itemize}
So, if $ f_{n}(\alpha(\mu, \nu)) \neq 0 $ or $ f_{n}(\alpha(\lambda, \nu)) \neq 0 $, then $ 2^{-n-4} < \alpha(\mu, \nu) < 2^{-n-1} $, and \eqref{impl2} can be applied. We obtain for the first sum that
\begin{align*}
\sum_{n \in \mathbb{Z}} |f_{n}(\alpha(\mu, \nu)) & - f_{n}(\alpha(\lambda, \nu))| |\gamma_{k}^{n}(\mu) - \gamma_{k}(\nu)| \\
 & \leq \sum_{2^{-n-4} < \alpha(\mu, \nu) < 2^{-n-1}} 2^{n+3} |\alpha(\mu, \nu) - \alpha(\lambda, \nu)| (2^{k+1} + 16) \alpha(\mu, \nu) \\
 & \leq \sum_{2^{-n-4} < \alpha(\mu, \nu) < 2^{-n-1}} 2^{n+3} \alpha(\mu, \lambda) (2^{k+1} + 16) 2^{-n-1} \\
 & \leq 3 \cdot 2^{2} (2^{k+1} + 16) \alpha(\mu, \lambda).
\end{align*}

Concerning the second sum, we notice that if $ f_{n}(\alpha(\lambda, \nu)) \neq 0 $, then $ \alpha(\lambda, \nu) < 2^{-n-1} $, and so $ \alpha(\mu, \lambda) \leq \alpha(\mu, \nu) + \alpha(\lambda, \nu) \leq 2\alpha(\lambda, \nu) < 2^{-n} $. Applying \eqref{impl1}, we obtain
$$ \sum_{n \in \mathbb{Z}} f_{n}(\alpha(\lambda, \nu)) |\gamma_{k}^{n}(\mu) - \gamma_{k}^{n}(\lambda)| \leq \sum_{n \in \mathbb{Z}} f_{n}(\alpha(\lambda, \nu)) 2^{k+1} \alpha(\mu, \lambda) = 2^{k+1} \alpha(\mu, \lambda), $$
and \eqref{impl5} follows.

Finally, we finish the proof with the observation that \eqref{impl4} and \eqref{impl5} provide
$$ |\gamma_{k}(\mu) - \gamma_{k}(\lambda)| \leq [12 (2^{k+1} + 16) + 2^{k+1}] \alpha(\mu, \lambda). $$
We can suppose that $ \alpha(\mu, \nu) \leq \alpha(\lambda, \nu) $. If $ \alpha(\lambda, \nu) \geq 2 \alpha(\mu, \nu) $, we use \eqref{impl4}, and if $ \alpha(\lambda, \nu) < 2 \alpha(\mu, \nu) $, we use \eqref{impl5}.
\end{proof}

\begin{definition}
Let $ \mathcal{B}_{(1)} $ denote the set of all norms $ \nu \in \mathcal{B} $ such that $ \nu(e_{k}) = 1 $ for each $ k \in \mathbb{N} $.
\end{definition}

Now we introduce the second auxiliary result for proving Proposition~\ref{propspojvnor}.

\begin{lemma} \label{lemmspojvnor}
The condition (2') from Proposition~\ref{propspojvnorcharakt} is valid for $ \mathcal{A} = \mathcal{B}_{(1)} $.
\end{lemma}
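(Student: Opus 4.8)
The plan is to exhibit, once and for all, one explicit selection $\Gamma_{\nu,z^*}$ for each admissible pair $(\nu,z^*)$, arranged so that the $k$-th coordinate of $\Gamma_{\nu,z^*}(\mu)$ depends on the data only through finitely many numbers lying in a fixed compact set; then $\beta_k$ can simply be taken to be the maximal oscillation of that coordinate over all admissible data. The case $\eta=0$ is degenerate ($\Gamma\equiv0$, $\beta_k\equiv0$), so I fix $\eta\in(0,1)$. Write $V_n=\Span\{e_1,\dots,e_n\}$ and note that every $\mu\in\mathcal{B}_{(1)}$ satisfies $\mu\le\|\cdot\|_1$ on $c_{00}$, since $\mu(\sum a_ie_i)\le\sum|a_i|\mu(e_i)=\sum|a_i|$.

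Given $\nu\in\mathcal{B}_{(1)}$ and $z^*\in(c_{00})^{\#}$ with $|z^*(x)|\le\nu(x)$ for all $x$ (so $|\eta z^*(e_i)|\le\eta$), I would build $w^{\mu}=\Gamma_{\nu,z^*}(\mu)\in(c_{00})^{\#}$ coordinate by coordinate for every $\mu\in\mathcal{B}_{(1)}$. Having chosen $w^{\mu}(e_1),\dots,w^{\mu}(e_{j-1})$ so that the resulting functional on $V_{j-1}$ is dominated by $\eta\mu$ there, the set $J_j(\mu)$ of admissible values $s$ for which the extension $e_j\mapsto s$ is still dominated by $\eta\mu$ on $V_j$ is, by the Hahn--Banach theorem, a nonempty compact interval contained in $[-\eta,\eta]$ (testing against $e_j$ forces $|s|\le\eta\mu(e_j)=\eta$); I set $w^{\mu}(e_j)$ to be the point of $J_j(\mu)$ closest to $\eta z^*(e_j)$. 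This produces $w^{\mu}$ with $|w^{\mu}(x)|\le\eta\mu(x)\le\mu(x)$ for all $x\in c_{00}$. Two facts hold by construction: (i) $\Gamma_{\nu,z^*}(\nu)=\eta z^*$, because at $\mu=\nu$ the value $\eta z^*(e_j)$ always lies in $J_j(\nu)$ (the functional $\eta z^*$ is $\eta\nu$-dominated) and is therefore its own nearest point, so the induction gives $w^{\nu}(e_j)=\eta z^*(e_j)$ for all $j$; and (ii) the number $w^{\mu}(e_k)$ depends on $\mu$ only through the finite-dimensional restriction $\mu|_{V_k}$ and on $(\nu,z^*)$ only through $\bigl(\eta z^*(e_1),\dots,\eta z^*(e_k)\bigr)\in[-\eta,\eta]^k$. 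Write $w^{\mu}(e_k)=W_k\bigl(\mu;\,\eta z^*(e_1),\dots,\eta z^*(e_k)\bigr)$, where the first slot of $W_k$ enters only through $\mu|_{V_k}$, a norm on the $k$-dimensional space $V_k$ with all $\|e_i\|=1$.

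The next step is to prove that $W_k$ is jointly continuous in its arguments. Because the members of $\mathcal{B}_{(1)}$ are uniformly $\|\cdot\|_1$-Lipschitz, convergence in $\PP$ of a sequence from $\mathcal{B}_{(1)}$ forces convergence of the restrictions to $V_k$ uniformly on compacta, so joint continuity of $W_k$ immediately yields that $\mu\mapsto\Gamma_{\nu,z^*}(\mu)(e_k)$ is continuous. Granting this, I would put
\[
\beta_k(\mu,\lambda):=\max\bigl\{\, |W_k(\mu;t)-W_k(\lambda;t)| : t\in[-\eta,\eta]^k \,\bigr\}.
\]
Since $[-\eta,\eta]^k$ is compact and the integrand is jointly continuous, each $\beta_k$ is a continuous function $\mathcal{B}_{(1)}^2\to[0,2]$; obviously $\beta_k(\nu,\nu)=0$ for every $\nu$; and for any admissible $(\nu,z^*)$ the tuple $(\eta z^*(e_1),\dots,\eta z^*(e_k))$ belongs to $[-\eta,\eta]^k$, so $|\Gamma_{\nu,z^*}(\mu)(e_k)-\Gamma_{\nu,z^*}(\lambda)(e_k)|\le\beta_k(\mu,\lambda)$. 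Together with (i) and $|\Gamma_{\nu,z^*}(\mu)(x)|\le\mu(x)$ this is exactly condition (2') for $\mathcal{A}=\mathcal{B}_{(1)}$.

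The main obstacle is the joint continuity of $W_k$, which reduces to showing that the Hahn--Banach interval $J_j$ varies continuously with the norm $\mu|_{V_j}$ and with the already-chosen values $w^{\mu}(e_1),\dots,w^{\mu}(e_{j-1})$ (all on the fixed finite-dimensional $V_j$); one then composes with the harmless nearest-point retraction onto $J_j$ and runs the induction on $j$. The delicate case is when the functional built so far attains the norm $\eta\mu$ along some direction, for then the relevant extremal value $\inf_{x\in V_{j-1}}[\eta\mu(x+e_j)-w^{\mu}(x)]$ need not be attained on a bounded set. It is precisely here that the strict inequality $\eta<1$ (together with $\mu\le\|\cdot\|_1$) is used: it forces the integrand to tend to $+\infty$ along directions bounded away from the norm-attaining set, confining the infimum to an effectively compact region, while along the norm-attaining directions it is governed by one-sided directional derivatives of $\mu$, which vary continuously under the perturbations in play. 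Carrying this out carefully --- in particular handling the extremal directions and propagating the estimate through the induction --- is the technical heart of the argument.
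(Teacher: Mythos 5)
Your construction has a genuine gap at precisely the point you flag as the technical heart: the joint continuity of $W_k$ (hence of your $\beta_k$) in fact fails, because you extend with the \emph{same} domination constant $\eta$ at every step. The endpoints of $J_j(\mu)$ are $\sup_a\bigl[-\eta\mu(-e_j+\sum_{i<j}a_ie_i)+\sum_{i<j}a_iw^{\mu}(e_i)\bigr]$ and $\inf_a\bigl[\eta\mu(e_j+\sum_{i<j}a_ie_i)-\sum_{i<j}a_iw^{\mu}(e_i)\bigr]$, and as soon as the functional built so far attains the bound $\eta\mu$ along some direction (which your nearest-point rule produces immediately: $w^{\mu}(e_1)=\eta=\eta\mu(e_1)$ whenever $t_1=\eta$), the $\inf$ is attained only ``at infinity'' and equals $\eta$ times a one-sided directional derivative of $\mu$. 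Such derivatives are only upper semicontinuous, not continuous, under convergence of norms. Concretely: let $\nu=\mu$ be the $\ell_1$-norm on $V$, $z^*=e_1^*+e_2^*$, so $t=(\eta,\eta)$, and let $\mu_n\in\mathcal B_{(1)}$ agree with $\|\cdot\|_1$ except that on $\Span\{e_1,e_2\}$ it equals $\max\bigl(|x|,|y|,\tfrac{n}{n+1}(|x|+|y|)\bigr)$ on the quadrants $xy>0$. Then $\mu_n\to\mu$ in $\PP$, but with $w(e_1)=\eta$ one computes $\inf_a[\mu_n(e_2+ae_1)-a]=\lim_{s\to0^+}s^{-1}\bigl(\mu_n(e_1+se_2)-1\bigr)=0$ while the same quantity for $\mu$ equals $1$; hence $J_2(\mu_n)=[-\eta,0]$ while $J_2(\mu)=[-\eta,\eta]$, your rule gives $W_2(\mu_n;t)=0$ and $W_2(\mu;t)=\eta$, and so $\beta_2(\mu_n,\mu)\ge\eta$ for all $n$ although $\beta_2(\mu,\mu)=0$. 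The appeal to $\eta<1$ does not rescue this, because what is needed is slack \emph{between consecutive steps} of the extension, and your scheme has none.

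The missing idea, which is how the paper's proof proceeds, is to let the domination constant strictly increase with the coordinate: fix $\eta\le\kappa_1<\kappa_2<\cdots<1$, maintain the invariant that the partial functional on $\Span\{e_1,\dots,e_k\}$ is dominated by $\kappa_k\mu$, and extend at step $k+1$ under the weaker bound $\kappa_{k+1}\mu$. The positive gap $\kappa_{k+1}-\kappa_k$ forces the relevant $\inf$ and $\sup$ to be attained on the bounded set of tuples with $\mu\bigl(\sum_{i\le k}a_ie_i\bigr)<2\kappa_{k+1}/(\kappa_{k+1}-\kappa_k)$, which restores continuity and yields an explicit recursive formula for $\beta_{k+1}$ in terms of $\beta_1,\dots,\beta_k$ and the oscillation of $\mu\mapsto\mu(e_{k+1}+\sum a_ie_i)$ over that bounded set. (The paper also replaces your nearest-point projection by a convex combination $p_{k+1}u_{k+1}(\mu)+q_{k+1}v_{k+1}(\mu)$ of the two interval endpoints, calibrated so as to hit $\eta\cdot z^*(e_{k+1})$ at $\mu=\nu$; but that is a minor variation --- the essential repair your argument needs is the increasing sequence $\kappa_k$.)
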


The proof of the lemma is organized according to the order of quantifiers in condition (2'). This is perhaps not the most natural order for reading, and the reader may skip the first paragraph of the proof, in which the functions $ \beta_{k} $ are introduced in a somewhat incomprehensible way, and come back later. The next part, in which functions $ \gamma_{k} $ (the coordinates of $ \Gamma $) are constructed, is natural in some sense. In a recursive procedure, functionals are extended to a space with dimension $ 1 $ bigger, so there are similarities with the proof of the Hanh-Banach theorem. In our situation, the upper bound by the corresponding norm is a bit relaxed in each step (see formula \eqref{cond2}), which is allowed since we prove the relaxed condition (2'). This is needed later for having a control over the difference $ |\gamma_{k}(\mu) - \gamma_{k}(\lambda)| $.

\begin{proof}
Let $ \eta \in [0, 1) $ be given. We fix numbers $ \kappa_{k} < 1 $ such that $ \eta \leq \kappa_{1} < \kappa_{2} < \kappa_{3} < \dots $. For every $ \mu, \lambda \in \mathcal{B}_{(1)} $, we define recursively
$$ \beta_{1}(\mu, \lambda) = 0 $$
and
\begin{align*}
\beta_{k+1}(\mu, \lambda) = \sup \bigg\{ & \Big| \mu \Big( e_{k+1} + \sum_{i=1}^{k} a_{i} e_{i} \Big) - \lambda \Big( e_{k+1} + \sum_{i=1}^{k} a_{i} e_{i} \Big) \Big| + \sum_{i=1}^{k} |a_{i}| \beta_{i}(\mu, \lambda) : \\
 & a_{1}, \dots, a_{k} \in \mathbb{R}, \; \min \Big\{ \mu \Big( \sum_{i=1}^{k} a_{i} e_{i} \Big), \lambda \Big( \sum_{i=1}^{k} a_{i} e_{i} \Big) \Big\} < \frac{2\kappa_{k+1}}{\kappa_{k+1} - \kappa_{k}} \bigg\} .
\end{align*}
Clearly, $ \beta_{k}(\nu, \nu) = 0 $ for every $ \nu \in \mathcal{B}_{(1)} $. Let us sketch a proof of continuity of the functions $ \beta_{k} $. The function $ \beta_{1} = 0 $ is obviously continuous. Assuming that $ \beta_{i} $ is continuous for every $ i \leq k $, we consider for $ \delta > 0 $ the set
$$ \mathcal{U}_{\delta}^{k+1}(\mu) = \bigg\{ \mu' \in \mathcal{B}_{(1)} : \Big( \forall x \in \mathrm{span} \{ e_{1}, \dots, e_{k+1} \} \setminus \{ 0 \} : (1 + \delta)^{-1} < \frac{\mu'(x)}{\mu(x)} < 1 + \delta \Big) \bigg\}. $$
Using Lemma~\ref{lem:approx}, it is easy to see that $ \mathcal{U}_{\delta}^{k+1}(\mu) $ is an open neighborhood of $ \mu $ in $ \mathcal{B}_{(1)} $. Given $ \varepsilon > 0 $, we can find $ \delta > 0 $ such that $ |\beta_{k+1}(\mu', \lambda') - \beta_{k+1}(\mu, \lambda)| < \varepsilon $ for every $ (\mu', \lambda') \in \mathcal{U}_{\delta}^{k+1}(\mu) \times \mathcal{U}_{\delta}^{k+1}(\lambda) $. The details are left to the reader.

Let us prove that the functions $ \beta_{k} $ work. Given $ \nu \in \mathcal{B}_{(1)} $ and $ z^{*} \in (c_{00})^{\#} $ satisfying $ |z^{*}(x)| \leq \nu(x) $ for every $ x \in c_{00} $, we define first
$$ \gamma_{1}(\mu) = \eta \cdot z^{*}(e_{1}), \quad \mu \in \mathcal{B}_{(1)}. $$
Recursively, we define for every $ k \in \mathbb{N} $ functions
$$ u_{k+1}(\mu) = \sup_{a_{1}, \dots, a_{k}} \Big[ - \kappa_{k+1} \mu \Big( - e_{k+1} + \sum_{i=1}^{k} a_{i} e_{i} \Big) + \sum_{i=1}^{k} a_{i} \gamma_{i}(\mu) \Big], $$
$$ v_{k+1}(\mu) = \inf_{a_{1}, \dots, a_{k}} \Big[ \kappa_{k+1} \mu \Big( e_{k+1} + \sum_{i=1}^{k} a_{i} e_{i} \Big) - \sum_{i=1}^{k} a_{i} \gamma_{i}(\mu) \Big] $$
and
$$ \gamma_{k+1}(\mu) = p_{k+1} u_{k+1}(\mu) + q_{k+1} v_{k+1}(\mu), $$
where numbers $ p_{k+1} \geq 0, q_{k+1} \geq 0 $ with $ p_{k+1} + q_{k+1} = 1 $ are chosen in the way that
\begin{equation} \label{cond1}
\gamma_{k+1}(\nu) = \eta \cdot z^{*}(e_{k+1}).
\end{equation}
Let us check that it is possible to choose such numbers. Note first that $ \gamma_{1}(\nu) = \eta \cdot z^{*}(e_{1}) $. Assuming that the functions $ \gamma_{i} $ are already defined and satisfy $ \gamma_{i}(\nu) = \eta \cdot z^{*}(e_{i}) $ for $ i \leq k $, we notice that, for every $ a_{1}, \dots, a_{k} \in \mathbb{R} $,
$$ \pm \eta \cdot z^{*}(e_{k+1}) + \sum_{i=1}^{k} a_{i} \gamma_{i}(\nu) = \eta \cdot z^{*} \Big( \pm e_{k+1} + \sum_{i=1}^{k} a_{i} e_{i} \Big) \leq \kappa_{k+1} \nu \Big( \pm e_{k+1} + \sum_{i=1}^{k} a_{i} e_{i} \Big), $$
and consequently
$$ \eta \cdot z^{*}(e_{k+1}) \geq - \kappa_{k+1} \nu \Big( - e_{k+1} + \sum_{i=1}^{k} a_{i} e_{i} \Big) + \sum_{i=1}^{k} a_{i} \gamma_{i}(\nu), $$
$$ \eta \cdot z^{*}(e_{k+1}) \leq \kappa_{k+1} \nu \Big( e_{k+1} + \sum_{i=1}^{k} a_{i} e_{i} \Big) - \sum_{i=1}^{k} a_{i} \gamma_{i}(\nu). $$
This gives
$$ u_{k+1}(\nu) \leq \eta \cdot z^{*}(e_{k+1}) \leq v_{k+1}(\nu), $$
and it follows that suitable $ p_{k+1} $ and $ q_{k+1} $ do exist.

Let us prove that
\begin{equation} \label{cond2}
\sum_{i=1}^{k} a_{i} \gamma_{i}(\mu) \leq \kappa_{k} \mu \Big( \sum_{i=1}^{k} a_{i} e_{i} \Big)
\end{equation}
for every $ \mu \in \mathcal{B}_{(1)} $, $ k \in \mathbb{N} $ and $ a_{1}, \dots, a_{k} \in \mathbb{R} $. For $ k = 1 $, we just write $ a_{1} \gamma_{1}(\mu) = a_{1} \eta \cdot z^{*}(e_{1}) \leq |a_{1}| \kappa_{1} \nu(e_{1}) = |a_{1}| \kappa_{1} = |a_{1}| \kappa_{1} \mu(e_{1}) = \kappa_{1} \mu(a_{1} e_{1}) $. Assume that \eqref{cond2} is valid for $ k $. We show first that
$$ u_{k+1}(\mu) \leq \gamma_{k+1}(\mu) \leq v_{k+1}(\mu). $$
Clearly, it is sufficient to show just that $ u_{k+1}(\mu) \leq v_{k+1}(\mu) $. Given $ b_{1}, \dots, b_{k} $ and $ c_{1}, \dots, c_{k} $, we need to check that
$$ - \kappa_{k+1} \mu \Big( - e_{k+1} + \sum_{i=1}^{k} b_{i} e_{i} \Big) + \sum_{i=1}^{k} b_{i} \gamma_{i}(\mu) \leq \kappa_{k+1} \mu \Big( e_{k+1} + \sum_{i=1}^{k} c_{i} e_{i} \Big) - \sum_{i=1}^{k} c_{i} \gamma_{i}(\mu). $$
But this is easy, as
\begin{align*}
\sum_{i=1}^{k} b_{i} \gamma_{i}(\mu) + \sum_{i=1}^{k} c_{i} \gamma_{i}(\mu) & = \sum_{i=1}^{k} (b_{i} + c_{i}) \gamma_{i}(\mu) \leq \kappa_{k} \mu \Big( \sum_{i=1}^{k} (b_{i} +c_{i}) e_{i} \Big) \\
 & \leq \kappa_{k+1} \Big[ \mu \Big( - e_{k+1} + \sum_{i=1}^{k} b_{i} e_{i} \Big) + \mu \Big( e_{k+1} + \sum_{i=1}^{k} c_{i} e_{i} \Big) \Big].
\end{align*}
Now, let us verify \eqref{cond2} for $ k + 1 $. We can suppose that $ a_{k+1} = \pm 1 $. For $ a_{k+1} = 1 $, it is enough to use
$$ \gamma_{k+1}(\mu) \leq v_{k+1}(\mu) \leq \kappa_{k+1} \mu \Big( e_{k+1} + \sum_{i=1}^{k} a_{i} e_{i} \Big) - \sum_{i=1}^{k} a_{i} \gamma_{i}(\mu), $$
and for $ a_{k+1} = -1 $, it is enough to use
$$ \gamma_{k+1}(\mu) \geq u_{k+1}(\mu) \geq - \kappa_{k+1} \mu \Big( - e_{k+1} + \sum_{i=1}^{k} a_{i} e_{i} \Big) + \sum_{i=1}^{k} a_{i} \gamma_{i}(\mu). $$

Next, let us prove that
\begin{equation} \label{cond3}
|\gamma_{k}(\mu) - \gamma_{k}(\lambda)| \leq \beta_{k}(\mu, \lambda)
\end{equation}
for every $ \mu, \lambda \in \mathcal{B}_{(1)} $ and $ k \in \mathbb{N} $. This is clear for $ k = 1 $, as $ \gamma_{1} $ is constant. Assume that \eqref{cond3} is valid for $ i \leq k $. To prove it for $ k + 1 $, it is sufficient to show the inequalities
$$ |u_{k+1}(\mu) - u_{k+1}(\lambda)| \leq \beta_{k+1}(\mu, \lambda) \quad \textnormal{and} \quad |v_{k+1}(\mu) - v_{k+1}(\lambda)| \leq \beta_{k+1}(\mu, \lambda). $$
We consider the function $ v_{k+1} $ only, since the inequality for $ u_{k+1} $ can be shown in the same way. Let us note first that, in the definition of $ v_{k+1}(\mu) $, it is possible to take the infimum only over $ k $-tuples with
$$ \mu \Big( \sum_{i=1}^{k} a_{i} e_{i} \Big) < \frac{2\kappa_{k+1}}{\kappa_{k+1} - \kappa_{k}}. $$
Indeed, for $ a_{1}, \dots, a_{k} $ which do not satisfy this condition, using \eqref{cond2}, we obtain
\begin{align*}
 \kappa_{k+1} \mu \Big( e_{k+1} + & \sum_{i=1}^{k} a_{i} e_{i} \Big) - \sum_{i=1}^{k} a_{i} \gamma_{i}(\mu) \\
 & \geq \kappa_{k+1} \mu \Big( \sum_{i=1}^{k} a_{i} e_{i} \Big) - \kappa_{k+1} \mu(e_{k+1}) - \kappa_{k} \mu \Big( \sum_{i=1}^{k} a_{i} e_{i} \Big) \\
 & = (\kappa_{k+1} - \kappa_{k}) \mu \Big( \sum_{i=1}^{k} a_{i} e_{i} \Big) - \kappa_{k+1} \\
 & \geq 2 \kappa_{k+1} - \kappa_{k+1} = \kappa_{k+1} = \kappa_{k+1} \mu \Big( e_{k+1} + \sum_{i=1}^{k} 0 \cdot e_{i} \Big) - \sum_{i=1}^{k} 0 \cdot \gamma_{i}(\mu).
\end{align*}
Now, \eqref{cond3} is provided by the following computation, in which every sup/inf is meant over $ k $-tuples with $ \mu \Big( \sum_{i=1}^{k} a_{i} e_{i} \Big) < \frac{2\kappa_{k+1}}{\kappa_{k+1} - \kappa_{k}} $ or $ \lambda \Big( \sum_{i=1}^{k} a_{i} e_{i} \Big) < \frac{2\kappa_{k+1}}{\kappa_{k+1} - \kappa_{k}} $:
\begin{align*}
 & |v_{k+1}(\mu) - v_{k+1}(\lambda)| \\
 & = \bigg| \inf \Big[ \kappa_{k+1} \mu \Big( e_{k+1} + \sum_{i=1}^{k} a_{i} e_{i} \Big) - \sum_{i=1}^{k} a_{i} \gamma_{i}(\mu) \Big] \\
 & \hspace{4cm} - \inf \Big[ \kappa_{k+1} \lambda \Big( e_{k+1} + \sum_{i=1}^{k} a_{i} e_{i} \Big) - \sum_{i=1}^{k} a_{i} \gamma_{i}(\lambda) \Big] \bigg| \\
 & \leq \sup \bigg| \Big[ \kappa_{k+1} \mu \Big( e_{k+1} + \sum_{i=1}^{k} a_{i} e_{i} \Big) - \sum_{i=1}^{k} a_{i} \gamma_{i}(\mu) \Big] \\
 & \hspace{4cm} - \Big[ \kappa_{k+1} \lambda \Big( e_{k+1} + \sum_{i=1}^{k} a_{i} e_{i} \Big) - \sum_{i=1}^{k} a_{i} \gamma_{i}(\lambda) \Big] \bigg| \\
 & = \sup \bigg| \kappa_{k+1} \Big[ \mu \Big( e_{k+1} + \sum_{i=1}^{k} a_{i} e_{i} \Big) - \lambda \Big( e_{k+1} + \sum_{i=1}^{k} a_{i} e_{i} \Big) \Big] - \sum_{i=1}^{k} a_{i} \big( \gamma_{i}(\mu) - \gamma_{i}(\lambda) \big) \bigg| \\
 & \leq \sup \bigg[ \Big| \mu \Big( e_{k+1} + \sum_{i=1}^{k} a_{i} e_{i} \Big) - \lambda \Big( e_{k+1} + \sum_{i=1}^{k} a_{i} e_{i} \Big) \Big| + \sum_{i=1}^{k} |a_{i}| \beta_{i}(\mu, \lambda) \bigg] = \beta_{k+1}(\mu, \lambda).
\end{align*}

Finally, as usual, we put $ \Gamma(\mu)(e_{k}) = \gamma_{k}(\mu) $. The required properties of $ \Gamma $ follow now from \eqref{cond1}, \eqref{cond2} and \eqref{cond3}. Thus, the functions $ \beta_{k} $ work, and the proof of the lemma is completed.
\end{proof}

\begin{proof}[Proof of Proposition~\ref{propspojvnor}]
Let $ X $ be an isometrically universal separable Banach space. By Lemma~\ref{lemmspojvnor}, the condition (2') from Proposition~\ref{propspojvnorcharakt} is valid for $ \mathcal{A} = \mathcal{B}_{(1)} $. Hence, the condition (1) from this proposition is valid for  $ \mathcal{A} = \mathcal{B}_{(1)} $ as well. There are a separable Banach space $ U $ and continuous mappings $ \chi_{k} : \mathcal{B}_{(1)} \to U, k \in \mathbb{N}, $ such that
$$ \Big\Vert \sum_{k=1}^{n} a_{k} \chi_{k}(\nu) \Big\Vert = \nu \Big( \sum_{k=1}^{n} a_{k} e_{k} \Big) $$
for every $ \sum_{k=1}^{n} a_{k} e_{k} \in c_{00} $ and every $ \nu \in \mathcal{B}_{(1)} $. Since $ X $ contains an isometric copy of $ U $, we can suppose that $ U \subseteq X $.

Let us consider the continuous mapping $ \Psi : \mathcal{B} \mathcal \to \mathcal{B}_{(1)} $ given by
$$ \Psi(\mu) \Big( \sum_{k=1}^{n} a_{k} e_{k} \Big) = \mu \Big( \sum_{k=1}^{n} \frac{a_{k}}{\mu(e_{k})} e_{k} \Big). $$
If we define
$$ \widetilde{\chi}_{k}(\mu) = \mu(e_{k}) \cdot \chi_{k}(\Psi(\mu)), \quad \mu \in \mathcal{B}, $$
for each $ k \in \mathbb{N} $, then we get
$$ \Big\Vert \sum_{k=1}^{n} b_{k} \widetilde{\chi}_{k}(\mu) \Big\Vert = \Big\Vert \sum_{k=1}^{n} b_{k} \mu(e_{k}) \chi_{k}(\Psi(\mu)) \Big\Vert = \Psi(\mu) \Big( \sum_{k=1}^{n} b_{k} \mu(e_{k}) e_{k} \Big) = \mu \Big( \sum_{k=1}^{n} b_{k} e_{k} \Big) $$
for every $ \sum_{k=1}^{n} b_{k} e_{k} \in c_{00} $ and every $ \mu \in \mathcal{B} $.
\end{proof}

To summarize, we obtained an optimal reduction from $\B$ to $SB_\infty(X)$ and from $SB_\infty(X)$ to $\PP_\infty$. However, our reduction from $\PP_\infty$ to $\B$ seems not to be optimal, so one is tempted to ask the following.

\begin{question}\label{q:1}
Does there exist a continuous mapping $\Phi:\PP_\infty\to\B$ such that for every $\mu\in\PP_\infty$ we have $X_\mu\equiv X_{\Phi(\mu)}$?
\end{question}

Note that a positive answer to Question~\ref{q:1} would imply a positive answer to Question~\ref{q:2} and that a sufficient condition for a positive solution of Question~\ref{q:2} is provided by Proposition~\ref{propspojvnorcharakt}.

\begin{question}\label{q:2}
Let $X$ be an isometrically universal separable Banach space and let $\tau$ be an admissible topology on $SB(X)$. Does there exist a $\boldsymbol{\Sigma}_2^0$-measurable mapping $\Phi:\PP_\infty \to (SB(X),\tau)$ such that for every $\mu\in\PP_\infty$ we have $X_\mu\equiv \Phi(\mu)$?
\end{question}
\section{Generic properties}\label{sect:generic}
As soon as one has a Polish space, or more generally a Baire space, of some objects it is natural and often useful to find properties (of these objects) that are generic; that is, the corresponding subset of the space is comeager. In the case of the spaces $\PP$, $\PP_\infty$ and $\B$ we resolve this problem completely, see Theorem~\ref{thm:gurariiTypicalInP}  which is a more precise description of the content of Theorem~\ref{thm:Intro4}.

In the case of the spaces $SB(X)$ with an admissible topology, this is the content of Problem 5.5 from \cite{GS18}. We show that in that case the situation is more complicated. In particular, we confirm the suspicion of Godefroy and Saint-Raymond that being meager in $SB(X)$ is not independent of the chosen admissible topology, see Theorem~\ref{thm:GurariiinWijsman} and  Theorem \ref{thm:Baireadmiss2} which together imply Theorem~\ref{thm:Intro5}.

\subsection{Generic objects in $\PP$}The main result of this subsection is Theorem~\ref{thm:gurariiTypicalInP} below. Although it may not appear surprising to a specialist in Fra\" iss\' e theory, as the Gurari\u{\i} space is the Fra\" iss\' e limit of finite-dimensional Banach spaces, see e.g. \cite[Theorem 4.3]{Ya15}, its proof is far more involved than analogous results for countable Fra\" iss\' e limits. Also, the result has actually several applications in estimating the complexities of isometry and isomorphism classes of other Banach spaces, see \cite{CDDK21}, where this issues is addressed.

\begin{theorem}\label{thm:gurariiTypicalInP}
Let $\mathbb{G}$ be the Gurari\u{\i} space. Then the isometry class $\isomtrclass[\mathcal I]{\mathbb{G}}$ is a dense $G_\delta$-set in $\mathcal{I}$ for any $\mathcal{I}\in\{\PP,\PP_\infty,\B\}$.
\end{theorem}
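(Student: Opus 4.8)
The plan is to treat the two parts of the statement separately: density is immediate from the machinery already in place, while the $G_\delta$ property is the substantial task. For \emph{density}, recall that the Gurari\u{\i} space $\mathbb{G}$ is isometrically universal, so every finite-dimensional Banach space embeds isometrically into $\mathbb{G}$ and hence \emph{every} separable Banach space is finitely representable in $\mathbb{G}$. Thus by Proposition~\ref{prop:closureOfIsometryClasses} applied with $X=\mathbb{G}$ we get $\overline{\isomtrclass[\mathcal I]{\mathbb{G}}}\cap\mathcal I=\{\nu\in\mathcal I\setsep X_\nu\text{ finitely representable in }\mathbb{G}\}=\mathcal I$ for each $\mathcal I\in\{\PP,\PP_\infty,\B\}$; that is, $\isomtrclass[\mathcal I]{\mathbb{G}}$ is dense. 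Once we also know it is $G_\delta$, being dense $G_\delta$ in the Polish (hence Baire) space $\mathcal I$ it is automatically comeager, which is the ``generic object'' conclusion.

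For the $G_\delta$ part the backbone is the Fra\"iss\'e characterization of $\mathbb{G}$ (see e.g.\ \cite[Theorem~4.3]{Ya15}): an infinite-dimensional separable Banach space $X$ is linearly isometric to $\mathbb{G}$ if and only if it has the approximate extension property, and it suffices to test that property on one-point extensions $E\subseteq F$ with $\dim F=\dim E+1$ taken from a fixed countable family $\mathcal E$ that is dense (in the Banach--Mazur sense, with prescribed hyperplane $E$) among all such pairs; fix such an $\mathcal E$ with rational data. For $(E,F)\in\mathcal E$ with $\dim E=n$, for $\varepsilon\in\Rat\cap(0,\infty)$, $(v_i)_i\in V^n$ and $m\in\Nat$, I would set
\[
R\big((E,F),\varepsilon,(v_i)_i,m\big):=\big(\mathcal I\setminus D_m((v_i)_i)\big)\ \cup\ \Big\{\mu\in\mathcal I\setsep \exists w\in V:\ \delta_F(\mu)< g_{E,F}\big(\delta_E(\mu)\big)+\varepsilon\Big\},
\]
where, with $(v_i)_i$ (and $w$) as implicit parameters, $\delta_E(\mu)=\inf\{K\ge 1\setsep(\mu,v_1,\dots,v_n)\approximates[E]{K}\}\in[1,\infty]$ and $\delta_F(\mu)=\inf\{K\ge1\setsep(\mu,v_1,\dots,v_n,w)\approximates[F]{K}\}$ (sending $v_i\mapsto e_i^F$, $w\mapsto e_{n+1}^F$), $D_m((v_i)_i)=\{\mu\setsep\mu(\sum_i a_iv_i)\ge m^{-1}\sum_i|a_i|\text{ for all }(a_i)\in\Rea^n\}$, and $g_{E,F}\colon[1,\infty]\to[1,\infty]$ is a fixed continuous increasing function with $g_{E,F}(1)=1$ chosen large enough to dominate both the distortion of a quantitative Gurari\u{\i} extension of a near-isometric copy of $E$ and the distortion of a crude one-point extension (by Riesz's lemma, available since $X_\mu$ is infinite-dimensional) of a badly distorted copy.

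The claim to prove is that $\isomtrclass[\mathcal I]{\mathbb{G}}=\bigcap R\big((E,F),\varepsilon,(v_i)_i,m\big)$, the intersection being over the above countable index set (for $\mathcal I=\PP$ one also intersects with the $G_\delta$ set $\PP_\infty$, Corollary~\ref{cor:polishTopologies}); since each $R(\cdots)$ is open this yields the $G_\delta$ conclusion. Openness holds because $D_m((v_i)_i)$ is closed (an intersection of the closed conditions $\mu(\sum a_iv_i)\ge m^{-1}\sum|a_i|$), so its complement is open, and because by Lemma~\ref{lem:infiniteDimIsGDelta} the maps $\mu\mapsto\delta_E(\mu)$, $\mu\mapsto\delta_F(\mu)$ are continuous into $[1,\infty]$ for fixed $(v_i)_i,w$, so $\{\mu\setsep\delta_F(\mu)<g_{E,F}(\delta_E(\mu))+\varepsilon\}$ is open and so is its union over $w\in V$. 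For ``$\subseteq$'': if $X_\mu\equiv\mathbb{G}$ and $\mu\in D_m((v_i)_i)$, then $v_1,\dots,v_n$ are linearly independent with a controlled lower bound, so $v_i\mapsto e_i^E$ realizes $\Span\{v_1,\dots,v_n\}\subseteq\mathbb{G}$ as a $\delta_E(\mu)$-isomorphic copy of $E$; the quantitative extension property of $\mathbb{G}$ (used when $\delta_E(\mu)$ is small, with a crude one-point extension handling the large case) produces $w_0\in\mathbb{G}$ realizing $F$ with distortion $<g_{E,F}(\delta_E(\mu))+\varepsilon/2$, and any $w\in V$ close enough to $w_0$ witnesses membership in the second set. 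For ``$\supseteq$'': if $\mu$ lies in every $R(\cdots)$, then given finite-dimensional $E\subseteq F$ (written as a chain of one-point extensions approximated from $\mathcal E$), an isometric $f\colon E\to X_\mu$ and $\varepsilon_0>0$, approximate $f$ by $(v_i)_i\in V^n$ so well that $\delta_E(\mu)\le1+\eta$ with $g_{E,F}(1+\eta)+\varepsilon<1+\varepsilon_0$ and $\mu\in D_m((v_i)_i)$ for some $m$; then the first set in $R((E,F),\varepsilon,(v_i)_i,m)$ does not contain $\mu$, so the second yields $w\in V$ with $\delta_F(\mu)<1+\varepsilon_0$, i.e.\ a $(1+\varepsilon_0)$-isometric extension of $f$. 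Iterating along the chain gives the approximate extension property, so $X_\mu\equiv\mathbb{G}$ by \cite[Theorem~4.3]{Ya15}.

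The main obstacle — and the reason this is, as the text notes, far more involved than for countable Fra\"iss\'e limits — is getting each requirement set to be genuinely \emph{open} rather than merely $\boldsymbol{\Sigma}^0_2$. The naive encoding ``every $(1+\delta)$-copy of $E$ extends to a $(1+\varepsilon)$-copy of $F$'' is of the form (\emph{complement of an open set})\,$\cup$\,(\emph{open set}), since the hypothesis ``$(v_i)_i$ is a $(1+\delta)$-copy of $E$'' is open but not closed, and intersecting over all data then yields only a $\boldsymbol{\Pi}^0_3$ set (for countable structures the analogous hypothesis ``$p$ is a partial isomorphism'' is clopen, which is exactly what fails here). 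The two devices that repair this are: replacing the open hypothesis by the \emph{closed} quantitative non-degeneracy condition $D_m((v_i)_i)$, whose complement is then open; and the continuous slack function $g_{E,F}$, which lets the conclusion refer to the \emph{actual} distortion $\delta_E(\mu)$, a continuous quantity, rather than to a fixed threshold, so that the requirement is vacuously easy when $(v_i)_i$ is far from a copy of $E$ but still forces a genuine extension when it is close (this is where $g_{E,F}(1)=1$ enters). The remaining work is routine but technical: pinning down the quantitative form of Gurari\u{\i}'s extension property used in ``$\subseteq$''; the perturbation estimates of Lemma~\ref{lem:approx} and Lemma~\ref{lem:approx2}, used to pass between $V$ and $X_\mu$ and to realize a general extension as a chain of one-point extensions from $\mathcal E$; and observing that $\bigcap R(\cdots)$ already forces $X_\mu$ to be infinite-dimensional (so the case $\mathcal I=\PP$ is not special beyond the harmless intersection with $\PP_\infty$).
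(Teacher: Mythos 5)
Your argument has the same architecture as the paper's proof: density is obtained exactly as you describe, from Proposition~\ref{prop:closureOfIsometryClasses} and the isometric universality of $\mathbb{G}$, and the $G_\delta$ part is obtained by writing $\isomtrclass{\mathbb{G}}$ as a countable intersection of quantitative extension requirements indexed by rational data and tuples from $V$, with the Gurari\u{\i} extension property and the perturbation lemmas (Lemmas~\ref{lem:approx} and~\ref{lem:approx2}) doing the work in the two inclusions. The outline is sound, but the paragraph in which you diagnose the ``main obstacle'' contains a genuine error of Borel bookkeeping: a set of the form $(\text{complement of an open set})\cup(\text{open set})$ is closed~$\cup$~open, hence $\boldsymbol{\Delta}^0_2$ and in particular $G_\delta$, and a countable intersection of $G_\delta$ sets is again $G_\delta$ --- not merely $\boldsymbol{\Pi}^0_3$ (your conclusion would be right only if each requirement were $F_\sigma$ but not $G_\delta$). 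This is precisely what the paper exploits: its sets $G(n,n',P,P',g)$ in Proposition~\ref{prop:characterization} have an \emph{open} hypothesis ($d_{\dom(P)}(P,\mu)<\tfrac1n$ and ``$\mu$ restricts to a norm'') and so are closed~$\cup$~open rather than open, and Theorem~\ref{thm:gurariiGDelta} follows anyway. Your two ``repair devices'' --- the closed non-degeneracy condition $D_m$ and the slack function $g_{E,F}$ --- therefore solve a non-problem; they are harmless (open sets are a fortiori $G_\delta$), but the real difficulty in this theorem is the web of perturbation estimates needed to pass between tuples in $V$ and subspaces of $X_\mu$, to return the extended copy to the fixed vectors $v_i$, and to chain one-point extensions, which you correctly flag but leave entirely to the reader. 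One verification you should tighten: Lemma~\ref{lem:infiniteDimIsGDelta} yields only that $\{\mu\colon\delta_E(\mu)<K\}$ is open, i.e.\ upper semicontinuity; for openness of $\{\mu\colon\delta_F(\mu)<g_{E,F}(\delta_E(\mu))+\varepsilon\}$ you need $\delta_F$ upper semicontinuous together with $\delta_E$ \emph{lower} semicontinuous (the latter holds because $\delta_E$ is a supremum of lower semicontinuous ratio functions), so the blanket appeal to ``continuity'' of both quantities via that lemma is the wrong direction for $\delta_E$ and should be argued separately.
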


Let us recall what the Gurari\u{\i} space is. One of the characterizations of the Gurari\u{\i} space is the following, for more details we refer the interested reader e.g. to \cite{CGK} (the characterization below is provided by \cite[Lemma 2.2]{CGK}).

\begin{definition}\label{def:Gurarii}
The Gurari\u{\i} space is the unique (up to isometry) separable Banach space such that  for every $\varepsilon > 0$ and every isometric embedding $g:A\to B$, where $B$ is a finite-dimensional Banach space and $A$ is a subspace of $\mathbb{G}$, there is a $(1+\varepsilon)$-isomorphism $f:B\to \mathbb{G}$ such that $\|f\circ g - id_A\|\leq \varepsilon$.
\end{definition}
 
In the remainder of this subsection we prove Theorem~\ref{thm:gurariiTypicalInP}. Let us start with the most technical part, namely that $\isomtrclass[\PP_\infty]{\mathbb{G}}$ is a $G_\delta$-set in $\PP_\infty$.

We need two technical lemmas first.

\begin{lemma}\label{lem:approx2}
\begin{enumerate}[(i)]
    \item\label{it:approxIsometryOnBasis} Given a basis $\b_E = \{e_1,\ldots,e_n\}$ of a finite-dimensional Banach space $E$, there is $C>0$ and a function $\phi_2^{\b_E}:[0,C)\to[0,\infty)$ continuous at zero with $\phi_2^{\b_E}(0)=0$ such that whenever $X$ is a Banach space with $E\subseteq X$ and $\{x_i\setsep i\leq n\}\subseteq X$ are such that $\|x_i-e_i\|<\varepsilon$, $i\leq n$, for some $\varepsilon<C$, then the linear operator $T:E\to X$ given by $T(e_i):=x_i$ is a $(1+\phi_2^{\b_E}(\varepsilon))$-isomorphism and $\|T-Id_E\|\leq \phi_2^{\b_E}(\varepsilon)$.
	\item\label{it:approxNety} Let $\varepsilon\in (0,1)$, $T:X\to Y$ be a surjective $(1+\varepsilon)$-isomorphism between Banach spaces $X$ and $Y$, $N$ be $\varepsilon$-dense for $S_X$. Then $T(N)$ is $3\varepsilon$-dense for $S_Y$.
\end{enumerate}
\end{lemma}
\begin{proof}
\ref{it:approxIsometryOnBasis}: Pick $C>0$ such that $C\sum_{i=1}^n|\lambda_i|\leq \|\sum_{i=1}^n \lambda_i e_i\|$ for every $(\lambda_i)_{i=1}^n\in \Rea^n$. Then for any $x = \sum_{i=1}^n \lambda_i e_i$ we have
\[\|Tx - x\|\leq \sum_{i=1}^n |\lambda_i| \|x_i-e_i\| < \frac{\varepsilon}{C} \|x\|.\]
Thus, $\|T-Id_E\|<\frac{\varepsilon}{C}$, $\|T\|\leq 1+\frac{\varepsilon}{C}$ and $\|Tx\|\geq (1-\frac{\varepsilon}{C})\|x\|= (1+\frac{\varepsilon}{C-\varepsilon})^{-1}\|x\|$. Thus, we may put $\phi_2^{\mathfrak{b}_E}(\varepsilon):=\frac{\varepsilon}{C-\varepsilon}$ for $\varepsilon\in[0,C)$.\\
\ref{it:approxNety}: Let $\varepsilon>0$, $T:X\to Y$ and $N$ be as in the assumptions. Then for every $y\in S_Y$ there is $x\in N$ with $\|x-\frac{T^{-1}(y)}{\|T^{-1}(y)\|}\|<\varepsilon$. Thus, we have
\[\begin{split}
	\|y - Tx\|&\leq \Big\|y - \frac{y}{\|T^{-1}(y)\|}\Big\| + \|T\|\cdot\Big\|x-\frac{T^{-1}(y)}{\|T^{-1}(y)\|}\Big\|\\
	& < \Big|1 - \frac{1}{\|T^{-1}y\|}\Big| + (1+\varepsilon)\varepsilon\leq \varepsilon + 2\varepsilon = 3\varepsilon.
\end{split}\]
\end{proof}

\begin{lemma}\label{lem:BIsDense}
For every $\mu\in\PP$, finite set $A\subseteq V$ and $\varepsilon>0$ there exists $\nu\in\B$ with $|\mu(x)-\nu(x)|<\varepsilon$ and $\nu(x)\in \Rat$ for every $x\in A$.
\end{lemma}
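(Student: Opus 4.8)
The plan is to reduce everything to producing a suitable \emph{norm} on a finite-dimensional coordinate subspace, where the key device is that the Minkowski gauge of a polytope with rational vertices takes rational values at every rational point.

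Since $A$ is finite, fix $m\in\Nat$ with $A\subseteq\qeSpan\{e_1,\dots,e_m\}$; replacing $A$ by $A\cup\{e_1,\dots,e_m\}$ (which only strengthens the statement) we may assume $e_1,\dots,e_m\in A$, and we set $E:=\Span\{e_1,\dots,e_m\}\subseteq c_{00}$. As $\mu$ need not be a norm on $E$, I first fix any auxiliary norm $\|\cdot\|$ on $E$ and choose $\delta>0$ with $\delta\|v\|<\varepsilon/2$ for all $v\in A$; then $\mu_1:=\mu|_E+\delta\|\cdot\|$ is a genuine norm on $E$ with $\mu(v)\le\mu_1(v)<\mu(v)+\varepsilon/2$ for every $v\in A$.

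Now I approximate $\mu_1$ on $A$ by a norm with rational values there. For each $v\in A\setminus\{0\}$ pick a rational $r_v$ with $\mu_1(v)<r_v<\mu_1(v)+\varepsilon/2$, and consider the symmetric polytope $Q:=\operatorname{conv}\{\pm v/r_v\setsep v\in A\setminus\{0\}\}\subseteq E$. Since $\pm e_i/r_{e_i}\in Q$ for $i\le m$, $Q$ has nonempty interior containing $0$, so its Minkowski functional $\rho$ is a norm on $E$. From $\mu_1(v/r_v)<1$ we get $Q\subseteq\{x\in E\setsep\mu_1(x)\le1\}$, hence $\rho\ge\mu_1$ on $E$; and $\rho(v)\le r_v$ because $v/r_v\in Q$. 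Consequently $\mu(v)\le\mu_1(v)\le\rho(v)\le r_v<\mu(v)+\varepsilon$ for every $v\in A\setminus\{0\}$ (the value at $0$ being $0$). Finally, in the coordinates $e_1,\dots,e_m$ every $v\in A$ has rational coordinates and each $r_v\in\Rat$, so $Q$ has rational vertices; since $0$ lies in its interior, $Q$ admits a presentation $Q=\bigcap_i\{x\setsep\langle a_i,x\rangle\le1\}$ with finitely many rational functionals $a_i$, and therefore $\rho(x)=\max_i\langle a_i,x\rangle\in\Rat$ for every rational $x\in E$; in particular $\rho(v)\in\Rat$ for all $v\in A$.

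It remains to turn $\rho$ into an element of $\B$. Define $\nu$ on $c_{00}$ by
\[
\nu\Big(\sum_i a_ie_i\Big):=\rho\Big(\sum_{i\le m}a_ie_i\Big)+\sum_{i>m}|a_i|.
\]
This is plainly a seminorm, and it vanishes only at $0$: the second summand forces $a_i=0$ for $i>m$, and then the first, $\rho$ being a norm on $E$, forces $a_i=0$ for $i\le m$. Hence $\nu$ is a norm on $c_{00}$, so the $e_i$ are linearly independent in $X_\nu$ and $X_\nu$ is infinite-dimensional, i.e.\ $\nu|_V\in\B$; and for $v\in A\subseteq E$ we have $\nu(v)=\rho(v)$, which is rational and satisfies $|\nu(v)-\mu(v)|<\varepsilon$. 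The only step that is not completely routine is the rationality observation in the polytope argument — that a polytope with rational vertices has a rational half-space presentation and hence a gauge that is $\Rat$-valued on $\Rat$-points; the passage from the pseudonorm $\mu$ to the norm $\mu_1$, the sandwich estimate, and the $\ell_1$-type extension to $c_{00}$ are all standard.
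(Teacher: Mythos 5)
Your proof is correct. It is worth noting that both arguments ultimately produce the Minkowski gauge of a symmetric polytope — the paper's formula $\nu(v)=\inf\{\sum_i|\alpha_i|\,\nu'(a_i)\colon v=\sum_i\alpha_i a_i\}$ is exactly the gauge of $\operatorname{conv}\{\pm a_i/\nu'(a_i)\}$ — but the rationality of the values is secured by genuinely different mechanisms. The paper prescribes the rational values $\nu'(a_i)$ in advance and must ensure the gauge actually attains them at the points $a_i$; this forces the preliminary Claim that $\mu(a_i)$ can be pushed strictly below the infimum $K_i$ of the competing $\ell_1$-combinations, proved by perturbing with a small multiple of a strictly convex ($\ell_2$) norm and a compactness/attainment argument. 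You avoid that entirely: you only need the two-sided sandwich $\mu(v)\le\mu_1(v)\le\rho(v)\le r_v<\mu(v)+\varepsilon$, and rationality comes for free from the structural fact that a full-dimensional symmetric polytope with rational vertices and $0$ in its interior admits a rational half-space presentation, so its gauge is $\Rat$-valued at every rational point (not merely on $A$). The trade-off is that your route imports a standard but nontrivial fact from rational polyhedral geometry (which you rightly flag as the one non-routine step and which deserves a reference or a line of justification via affine hulls of facets), whereas the paper stays within elementary normed-space estimates at the cost of the more delicate attainment argument. A further small advantage of your write-up is that the extension from the finite-dimensional piece to an element of $\B$ is made explicit via the $\ell_1$-sum, a step the paper dismisses as easy.
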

\begin{proof}
It suffices to define such a norm $\nu$ on $\Span A$ since then we can easily find some extension to the whole $V$. We assume that $0\notin A$ and moreover we can assume that no two elements of $A$ lie in the same one-dimensional subspace, i.e. are scalar multiples of each other. Indeed, otherwise we would find a subset $A'\subseteq A$ where no elements are scalar multiples of each other and every element of $A$ is a scalar multiple, necessarily rational scalar multiple, of some element from $A'$. Then proving the fact for $A'$ for sufficiently small $\delta$ automatically proves it for $A$ and $\varepsilon$.

We enumerate $A$ as $\{a_1,\ldots,a_n\}$ and so that the first $k$ elements $a_1,\ldots,a_k$, for some $k\leq n$, are linearly independent and form a basis of $\Span A$.

\noindent {\bf Claim.} By perturbing $\mu$ on $A$ by an arbitrarily small $\delta>0$ we can without loss of generality assume that for every $i\leq n$, $\mu(a_i)< K_i:=\inf\{\sum_{j\in J}\mu(\alpha_j a_j)\colon i\notin J\subseteq \{1,\ldots,n\}, a_i=\sum_{j\in J} \alpha_j a_j\}$.

Suppose the claim is proved. Then for every $i\leq n$ we set $\nu'(a_i)$ to be an arbitrary positive rational number in the interval $\left[\mu(a_i),\min\{K_i,\mu(a_i)+\varepsilon\}\right)$. From the assumption it is now clear that for all $i\leq n$, we have $$\nu'(a_i)\leq \inf\{\sum_{j=1}^n |\alpha_j|\nu'(a_j)\colon a_i=\sum_{j=1}^n \alpha_j a_j\}.$$
We extend $\nu'$ to a norm $\nu$ on $\Span A$ by the formula $$\nu(v):=\inf\{\sum_{i=1}^n |\alpha_i| \nu'(a_i)\colon v=\sum_{i=1}^n \alpha_i a_i\},$$ for $v\in\Span A$. From the previous assumption, it follows that $\nu(a_i)=\nu'(a_i)$, for all $i\leq n$. Moreover, $\nu$ is indeed a norm since $\nu(a_i)>0$ for all $i\leq n$, and the infimum in the definition of $\nu$ is, by compactness, always attained.

It remains to prove the claim. Let $\|\cdot\|_2$ be the $\ell_2$ norm on $\Span A$ with $a_1,\ldots,a_k$ the orthonormal basis. For each $m\in\Nat$ set $\mu_m:=\mu+\frac{\|\cdot\|_2}{m}$. Clearly $\mu_m\to\mu$, so it suffices to show that each $\mu_m$ satisfies the condition from the claim. Suppose that for some $m\in\Nat$ and $i\leq n$ we have $$\mu_m(a_i)=\inf\{\sum_{j\in J}\mu_m(\alpha_j a_j)\colon i\notin J\subseteq \{1,\ldots,n\}, a_i=\sum_{j\in J} \alpha_j a_j\}.$$ By compactness, the infimum is attained, i.e. there exists $(\alpha_j)_{j\leq n}$, with $\alpha_i=0$, $a_i=\sum_{j\leq n} \alpha_j a_j$ and $\mu_m(a_i)=\sum_{j\leq n} \mu_m(\alpha_j a_j)$. Indeed, if the infimum is approximated by a sequence $(\alpha^l_1,\ldots,\alpha^l_n)_{l\in\Nat}\subseteq \Rea^n$, then since each coordinate is bounded (because up to finitely many $l$'s we have $\sum_{j=1}^n \mu_m(\alpha_j^l a_j)\leq 2 \mu_m(a_i)$), we may pass to a convergent subsequence and attain the infimum at the limit. The $\ell_2$ norm $\|\cdot\|_2$ is strictly convex, so $\|a_i\|_2<\sum_{j\leq n} \|\alpha_j a_j\|_2$, while $\mu$ by triangle inequality satisfies $\mu(a_i)\leq \sum_{j\leq n} \mu(\alpha_j a_j)$. Since $\mu_m$ is the sum of $\mu$ and a positive multiple of the $\ell_2$ norm, we must have $\mu_m(a_i)<\sum_{j\leq n} \mu_m(\alpha_j a_j)$, a contradiction.
\end{proof}

\begin{notation}
For a finite set $A\subseteq V$ and $P,P'$ partial functions on $V$ (i.e. functions whose domains are subsets of $V$) with $A\subseteq\dom(P),\dom(P')$, we put $d_A(P,P'):=\max_{a\in A} |P(a) - P'(a)|$.
\end{notation}
    
Let $T$ be the countable set of tuples $(n,n',P,P',g)$ such that:
\begin{enumerate}[(a)]
    \item\label{it:first} $n,n'\in \N$;
    \item $P\in \Rat^{\dom(P)}$, $P'\in\Rat^{\dom(P')}$ where $\dom(P)$ and $\dom(P')$ are finite subsets of $V$;
    \item\label{it:existenceOfNorm} there exists $\nu\in\B$ such that $P' = \nu|_{\dom(P')}$;
    \item $g:\dom(P)\to \dom(P')$ is a one-to-one mapping;
    \item $P=P'\circ g$;
    \item\label{it:almostIsometry} whenever $\mu'\in\PP$, $\nu'\in\B$ are such that $P'\subseteq \nu'$, $d_{\dom(P)}(P,\mu')<\tfrac{1}{n}$ and $\mu'$ restricted to $\Span(\dom(P))\subseteq c_{00}$ is a norm then there exists $T_g:(\Span(\dom(P)),\mu')\to (\Span(\dom(P')),\nu')$ which is a $(1+\tfrac{1}{n'})$-isomorphism and $T_g\supseteq g$.
\end{enumerate}
For $(n,n',P,P',g)\in T$, we let $G(n,n',P,P',g)$ be the set of $\mu\in\PP_\infty$ such that
\begin{itemize}
    \item whenever $d_{\dom(P)}(P,\mu)<\tfrac{1}{n}$ and $\mu$ restricted to $\Span (\dom(P))\subseteq c_{00}$ is a norm, there is a $\Rat$-linear mapping $\Phi:V\cap \Span (\dom(P'))\to V$ such that $\mu(\Phi(gx)-x) < \tfrac{2}{n'}\mu(x)$ for every $x\in \dom(P)$ and $|P'(x) - \mu(\Phi(x))|<\tfrac{1}{n'}P'(x)$ for every $x\in \dom(P')$.
\end{itemize}
\begin{proposition}\label{prop:characterization}
Let $\mu\in \PP_\infty$. Then $X_\mu$ is isometric to the Gurari\u{\i} space if and only if $\mu\in G(n,n',P,P',g)$ for every $(n,n',P,P',g)\in T$.
\end{proposition}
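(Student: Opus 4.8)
The plan is to prove the two implications separately, each time playing off the intrinsic characterization of $\mathbb G$ recalled in Definition~\ref{def:Gurarii}.

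For necessity — assuming $X_\mu\equiv\mathbb G$ and checking $\mu\in G(n,n',P,P',g)$ for a fixed tuple of $T$ whose hypothesis ``$d_{\dom(P)}(P,\mu)<\tfrac1n$ and $\mu$ a norm on $\Span(\dom(P))$'' is met — I would first use condition~\ref{it:existenceOfNorm} to fix $\nu\in\mathcal B$ with $P'=\nu|_{\dom(P')}$, and then feed $\mu'=\mu$, $\nu'=\nu$ into condition~\ref{it:almostIsometry} to obtain a $(1+\tfrac1{n'})$-isomorphism $T_g\colon(\Span(\dom(P)),\mu)\to(\Span(\dom(P')),\nu)$ extending $g$. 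Writing $E:=T_g(\Span(\dom(P)))\subseteq B:=(\Span(\dom(P')),\nu)$, the inverse $j:=T_g^{-1}\colon E\to X_\mu$ is a $(1+\tfrac1{n'})$-isomorphic embedding; I would re-norm $B$ by replacing $\nu$ on $E$ with $\mu\circ j$ and extending to a norm $\widetilde\nu$ on $\Span(\dom(P'))$ that is $(1+\tfrac1{n'})$-equivalent to $\nu$, so that $j$ becomes an \emph{isometric} embedding of the subspace $(E,\widetilde\nu)$ of the finite-dimensional space $(\Span(\dom(P')),\widetilde\nu)$. Definition~\ref{def:Gurarii}, applied with a small parameter $\theta$, then produces a $(1+\theta)$-isomorphism $\widetilde f\colon(\Span(\dom(P')),\widetilde\nu)\to X_\mu$ with $\|\widetilde f|_E-j\|\le\theta$; finally, since $V$ is dense in $X_\mu$, I would perturb $\widetilde f$ on a $\Rat$-basis of $V\cap\Span(\dom(P'))$ to a $\Rat$-linear $\Phi\colon V\cap\Span(\dom(P'))\to V$ with $\|\Phi-\widetilde f\|$ tiny (Lemma~\ref{lem:approx2}\ref{it:approxIsometryOnBasis}). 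Choosing $\theta$ and the perturbation small enough, the required inequalities $\mu(\Phi(gx)-x)<\tfrac2{n'}\mu(x)$ on $\dom(P)$ (using $gx=T_gx\in E$) and $|P'(x)-\mu(\Phi(x))|<\tfrac1{n'}P'(x)$ on $\dom(P')$ (using that $\widetilde f$ is an almost-isometry and $\nu|_{\dom(P')}=P'$) follow by a direct estimate, giving $\mu\in G(n,n',P,P',g)$.

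For sufficiency — assuming $\mu\in\bigcap_{t\in T}G(t)$ — it is enough, by Definition~\ref{def:Gurarii} together with the separability and infinite-dimensionality of $X_\mu$, to verify the Gurari\u\i{} extension property for $X_\mu$: given $\varepsilon>0$, a finite-dimensional $B$, a subspace $A\subseteq X_\mu$ and an isometric embedding $g_0\colon A\to B$, produce a $(1+\varepsilon)$-isomorphism $f\colon B\to X_\mu$ with $\|f\circ g_0-\mathrm{id}_A\|\le\varepsilon$. The idea is to approximate the finite configuration $g_0\colon A\to B$ by rational data forming a tuple of $T$ and then quote $\mu\in G(\cdot)$. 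Concretely I would: fix $n'$ with $\tfrac2{n'}<\varepsilon/2$, then $\delta>0$ small (to be fed into Lemma~\ref{lem:approx}); using density of $V$, replace $A$ by a nearby $\Span(\dom(P))$ with $\dom(P)\subseteq V$ a $\delta$-net of its unit sphere on which $\mu$ is a norm; realize $B$ as some $X_{\nu_0}$, pick a $\delta$-net $\dom(P')\subseteq V$ of its unit sphere containing the $g_0$-image of $\dom(P)$, and set $g:=g_0|_{\dom(P)}$; by Lemma~\ref{lem:BIsDense} take $\nu\in\mathcal B$ rational on $\dom(P')$ and as close to $\nu_0$ as desired, put $P':=\nu|_{\dom(P')}$, $P:=P'\circ g$ (so $P$ is rational on $\dom(P)$ and close to $\mu|_{\dom(P)}$ since $g_0$ is isometric), and pick $n$ with $d_{\dom(P)}(P,\mu)<\tfrac1n$. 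A routine application of Lemma~\ref{lem:approx} then shows $(n,n',P,P',g)\in T$: conditions \ref{it:first}--(e) of the definition of $T$ are built into the construction, and condition~\ref{it:almostIsometry} holds because for any $\mu'$ and $\nu'$ as there the linear extension $T_g$ of $g$ coincides with $g_0$ and is hence squeezed, up to factors $1+\phi_1(O(\delta))$, between the $\mu'$- and the $\nu'$-norms. Feeding this tuple into $\mu\in G(n,n',P,P',g)$ yields $\Phi$ with the two inequality families; since $\dom(P')$ is a $\delta$-net for the unit sphere of $(\Span(\dom(P')),\nu)$, Lemma~\ref{lem:approx} upgrades the second family to ``$\Phi$ extends to a $(1+\varepsilon)$-isomorphism $f\colon B\to X_\mu$'', and since $\dom(P)$ is a $\delta$-net for the unit sphere of $A$, the first family gives $\|f\circ g_0-\mathrm{id}_A\|\le\varepsilon$. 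This is the Gurari\u\i{} extension property, so $X_\mu\equiv\mathbb G$.

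The main obstacle, I expect, is the sufficiency direction, and within it the claim that the rationalized data $(n,n',P,P',g)$ really lies in $T$ — that is, that condition~\ref{it:almostIsometry} is inherited from the exact isometric embedding $g_0$ by its combinatorial approximation $g$. This forces one to choose the net fineness $\delta$ and the tolerances $1/n,1/n'$ in the correct order and to track, via Lemma~\ref{lem:approx}, how an isometric embedding of finite-dimensional spaces is distorted under $\delta$-perturbations of the norms; the same bookkeeping, in a milder form, is what converts the conclusion of $G(n,n',P,P',g)$ back into the operator-norm estimates demanded by Definition~\ref{def:Gurarii}.
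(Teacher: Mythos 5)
Your proposal is correct and follows the same overall strategy as the paper's proof: necessity by invoking the extension property of $\mathbb{G}$ on the finite configuration coded by the tuple and then rationalizing the resulting map into $V$, and sufficiency by discretizing a given isometric embedding $G:A\to B$ into a tuple of $T$, checking membership in $T$ (in particular condition~\ref{it:almostIsometry}) via the net/perturbation Lemma~\ref{lem:approx}, and converting the $\Phi$ supplied by $G(n,n',P,P',g)$ back into a $(1+\eta)$-isomorphism.

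The one place you genuinely diverge is in the necessity direction: the paper applies \cite[Lemma 2.2]{KubisSolecki}, which extends \emph{almost}-isometric embeddings into $\mathbb{G}$ directly, whereas you reduce to the plain isometric case of Definition~\ref{def:Gurarii} by re-norming $\Span(\dom(P'))$ with a norm $\widetilde\nu$ making $T_g$ isometric. This works, but note that it succeeds only because the paper's convention $\max\{\|T\|,\|T^{-1}\|\}<K$ is a strict inequality: $\widetilde\nu$ is then $L$-equivalent to $\nu$ for some $L$ \emph{strictly} below $1+\tfrac1{n'}$, and that strict gap is exactly what absorbs the extra $(1+\theta)$ distortion of $\widetilde f$ and the final perturbation into $V$, so that $\mu(\Phi(x))$ still lands within the factor $1\pm\tfrac1{n'}$ of $P'(x)=\nu(x)$ demanded by $G(n,n',P,P',g)$. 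You should also record that the extension of $\widetilde\nu$ from $T_g(\Span(\dom(P)))$ to all of $\Span(\dom(P'))$ can be done without worsening the equivalence constant (e.g.\ via the Minkowski functional of $\mathrm{conv}(B_{\widetilde\nu}\cup L^{-1}B_{\nu})$). With those two points made explicit, and with the sufficiency bookkeeping you already flag carried out in the paper's order ($\varepsilon$-nets first, then $n$ and $n'$ chosen from $\varepsilon$ and $\phi_1$), the argument is complete.
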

\begin{proof}
In order to prove the first implication, let $\mu\in\PP_\infty$ be such that $X_\mu$ is isometric to the Gurari\u{\i} space and let $(n,n',P,P',g)\in T$ be such that $d_{\dom(P)}(P,\mu) < \tfrac{1}{n}$  and $\mu$ restricted to $\Span (\dom(P))\subseteq c_{00}$ is a norm. Consider the finite-dimensional space $A:=(\Span(\dom(P)),\mu)$. Let $\nu\in\B$ be as in \ref{it:existenceOfNorm}. Put $B = (\Span(\dom P'),\nu)$ and pick a basis $\b\subseteq V$ of $B$. 
By \ref{it:almostIsometry} there exists $T_g:A\to B$, which is a $(1+\tfrac{1}{n'})$-isomorphism and $T_g\supseteq g$. By \cite[Lemma 2.2]{KubisSolecki}, there is a $(1+\tfrac{1}{3n'})$-isomorphism $S:B\to X_\mu$ such that $\|ST_g-Id_A\|<\tfrac{1}{n'}$. By Lemma~\ref{lem:approx2}\ref{it:approxIsometryOnBasis}, we may for every $b\in\b$ find $x_b\in V$ such that the linear mapping $Q:S(B)\to X_\mu$ given by $Q(S(b)) = x_b$, $b\in \b$, is a $(1+\tfrac{1}{3n'})$-isomorphism with $\|Q-Id\|<\tfrac{1}{3n'}$. Consider $\Phi = QS|_{V\cap \Span(\dom P')}$. This is indeed a $\Rat$-linear map and since $QS$ is a $(1+\tfrac{1}{3n'})^2$-isomorphism and $(1+\tfrac{1}{3n'})^2<1+\tfrac{1}{n'}$, we have $|\mu(\Phi(x)) - \nu(x)|<\tfrac{1}{n'}\nu(x)$ for $x\in \dom(P')$. Moreover, for every $x\in \dom(P)$ we have 
\[\begin{split}
\mu(\Phi(gx) - x) & = \mu(QST_gx - x)\leq \mu(QST_gx - ST_gx) + \mu(ST_gx - x)\\
& < \tfrac{1}{3n'}\|ST_g\|\mu(x) + \tfrac{1}{n'}\mu(x) \leq \tfrac{2}{n'}\mu(x).
\end{split}\]
This shows that $\mu\in G(n,n',P,P',g)$.

In order to prove the second implication, let $\mu\in \PP_\infty$ be such that $\mu\in G(n,n',P,P',g)$ whenever $(n,n',P,P',g)\in T$. In what follows for $x\in c_{00}$ we denote by $[x]\in X_\mu$ the equivalence class corresponding to $x$. Pick a finite-dimensional space $A\subseteq X_\mu$, and an isometry $G:A\to B$, where $B$ is a finite-dimensional Banach space, we may without loss of generality assume $B \subseteq X_{\mu_B}$ for some $\mu_B\in\B$. Let $\b_A:=\{a_1,\ldots,a_j\}$ be a normalized basis of $A$ and extend $G(\b_A) = \{G(a_1),\ldots,G(a_j)\}$ to a normalized basis $\b_B = \{b_1,\ldots,b_k\}$ of $B$. Fix $\eta > 0$. It suffices to find a $(1+\eta)$-isomorphism $\Psi:B\to X_\mu$ with $\|\Psi G - I_A\|\leq\eta$. Consider the functions $\phi_1$ and $\phi_2^{\b_A}$ from Lemma~\ref{lem:approx} and Lemma~\ref{lem:approx2}\ref{it:approxIsometryOnBasis}. Pick $\delta\in(0,1)$ such that $\max\{\phi_1(t),\phi_2^{\b_A}(t)\}<\eta$ whenever $t<\delta$ and $\varepsilon\in (0,\tfrac{1}{20})$ such that $\phi_1(5\varepsilon)<\tfrac{1}{20}$ and $\varepsilon + 72\max\{\varepsilon,\phi_1(5\varepsilon)\}<\delta$.
\begin{claim}\label{claim:1}There are finite sets $M,N\subseteq V$ such that $\mu$ restricted to $\Span N\subseteq c_{00}$ is a norm and surjective $(1+\varepsilon)$-isomorphisms $T_A:A\to (\Span N,\mu)$, $T_B:B\to (\Span M,\mu_B)$  such that:
\begin{itemize}
	\item $N$ and $M$ are $\varepsilon$-dense sets  for $S_{T_A(A)}$ and $S_{T_B(B)}$, respectively;
	\item we have $\|[T_Aa_i]-a_i\|_{X_\mu}<\varepsilon$ for every $a_i\in\b_A$ and $\|(T_A)^{-1}x-[x]\|_{X_\mu}<\varepsilon$, $|\mu(x)-1|<\varepsilon$ for every $x\in N$;
	\item $(T_B)^{-1}(M)$ is $\tfrac{\varepsilon}{3}$-dense for $S_B$ and $\max\{|\mu_B((T_B)^{-1}x) - 1|, |\mu_B(x) - 1|\}<\tfrac{\varepsilon}{2}$ for every $x\in M$;
	\item $\big(T_BG(T_A)^{-1}\big)(N)\subseteq M$.
\end{itemize}
\end{claim}
\begin{proof}[Proof of Claim~\ref{claim:1}]
	By Lemma~\ref{lem:approx2}\ref{it:approxIsometryOnBasis}, we may pick $\{f_1,\ldots,f_n\}\subseteq V$ such that the linear operator $T_A:A\to X_\mu$ given by $T_A(a_i)=[f_i]$, $i\leq j$, is a $(1+\frac{\varepsilon}{6})$-isomorphism and $\|[T_Ax]-x\|_{X_\mu}<\tfrac{\varepsilon}{6}\|x\|_{X_\mu}$, $x\in A$. This implies that $\mu$ restricted to $\Span \{f_1,\ldots,f_j\}$ is a norm and since $T_A(A)$ is isometric to $(\Span\{f_1,\ldots,f_n\},\mu)$ we consider $T_A$ as a $(1+\frac{\varepsilon}{6})$-isomorphism between $A$ and $(\Span\{f_1,\ldots,f_n\},\mu)$. Now, pick $N'\subseteq A$ a finite $\tfrac{\varepsilon}{6}$-dense set for $S_A$ consisting of rational linear combinations of points from $\b_A$ with $\b_A\subseteq N'$ such that $|\|x\|_{X_\mu}-1|<\tfrac{\varepsilon}{6}$ for every $x\in N'$. Then $\|[T_Ax]-x\|_{X_\mu}<\tfrac{\varepsilon}{6}\|x\|_{X_\mu}<\tfrac{\varepsilon}{6}(1+\tfrac{\varepsilon}{6})<\tfrac{\varepsilon}{5}$ for every $x\in N'$. Put $N:=T_A(N')\subseteq V$. Then we easily obtain $|\mu(x)-1|<\tfrac{\varepsilon}{2}$ for every $x\in N$ and, by Lemma~\ref{lem:approx2}\ref{it:approxNety}, $N$ is $\tfrac{\varepsilon}{2}$-dense in $S_{T_A(A)}$. Similarly as above, we may pick $\{g_1,\ldots,g_k\}\subseteq V$ such that the linear operator $T_B:B\to X_{\mu_B}$ given by $T_B(b_i)=g_i$, $i\leq k$, is a $(1+\frac{\varepsilon}{6})$-isomorphism and we find $M'\subseteq B$ a finite $\tfrac{\varepsilon}{6}$-dense set for $S_B$ consisting of rational linear combinations of points from $\b_B$ with $M'\supseteq \{G(x)\setsep x\in N'\}\cup \b_B$ and $|\mu_B(x) - 1|<\tfrac{\varepsilon}{6}$ for $x\in M'$. Put $M:=T_B(M')$, then similarly as above $|\mu_B(x)-1|<\tfrac{\varepsilon}{2}$ for every $x\in M$ and $M$ is $\tfrac{\varepsilon}{2}$-dense in $S_{T_B(B)}$. Finally, we obviously have $\big(T_BG(T_A)^{-1}\big)(N) = T_B(G(N'))\subseteq T_B(M') = M$.
\end{proof}
By Lemma~\ref{lem:BIsDense}, there is $\nu\in\B$ having rational values on $M$ with $d_{M}(\nu,\mu_B\circ (T_B)^{-1})<\tfrac{\varepsilon}{2}$. Put $P' = \nu|_{M}$, consider the one-to-one map $g:N\to M$ given by $g:=T_BG(T_A)^{-1}|_N$ and put $P = P'\circ g$. Let $n\in\N$ be the integer part of $\tfrac{2}{3\varepsilon}$ and $n'\in\N$ be the integer part of $\frac{1}{9\max\{\varepsilon,\phi_1(5\varepsilon)\}}$.
Easy computations show that $\tfrac{3}{2}\varepsilon\leq \frac{1}{n}<2\varepsilon$ and $9\max\{\varepsilon,\phi_1(5\varepsilon)\}\leq \tfrac{1}{n'} < 18\max\{\varepsilon,\phi_1(5\varepsilon)\}$ (in the last inequality we are using that $\max\{\varepsilon,\phi_1(5\varepsilon)\}<\tfrac{1}{20}$).

Note that for every $x\in M$ we have
\begin{equation}\label{eq:nu}
\max\{|\nu(T_Bx)-1|,|\nu(x)-1|\}\leq \tfrac{\varepsilon}{2} + \max\{|\mu_B(x)-1|,|\mu_B((T_B)^{-1}x)-1|\} < \varepsilon.
\end{equation}
\begin{claim}\label{claim:2}
We have $(n,n',P,P',g)\in T$ and $d_{N}(P,\mu)<\tfrac{1}{n}$.
\end{claim}
\begin{proof}[Proof of Claim~\ref{claim:2}]
In order to see that $d_{N}(P,\mu)<\tfrac{1}{n}$, pick $x\in N$. Then
\[\begin{split}
|P(x) - \mu(x)| \leq \tfrac{\varepsilon}{2} + |\mu_B(G(T_A)^{-1}(x)) - \mu(x)| = \tfrac{\varepsilon}{2} +  |\|(T_A)^{-1}(x)\|_{X_\mu} - \|[x]\|_{X_\mu}| < \tfrac{3}{2}\varepsilon.
\end{split}\]
In order to see that $(n,n',P,P',g)\in T$, let us verify condition \ref{it:almostIsometry}.
Let $\mu'\in \PP$, $\nu'\in\mathcal B$ be such that $P'\subseteq \nu'$, $d_{N}(P,\mu')<\tfrac{1}{n} < 2\varepsilon$ and $\mu'$ restricted to $\Span N\subseteq c_{00}$ is a norm. Note that $|\mu'(x)-1|<5\varepsilon$ for every $x\in N$ and so, since $N$ is $\varepsilon$-dense for the sphere of $T_A(A) = (\Span N,\mu)$, the mapping $id:(\Span N,\mu)\to (\Span N,\mu')$ is a $(1+\phi_1(5\varepsilon))$-isomorphism. Further, $|\nu'(x)-1|=|\nu(x)-1|<\varepsilon$ for every $x\in M$ and so the mapping $id:(\Span M,\mu_B)\to (\Span M,\nu')$ is a $(1+\phi_1(5\varepsilon))$-isomorphism as well. Finally, since $T_BG(T_A)^{-1}$ is a $(1+\varepsilon)^2$-isomorphism between $(\Span N,\mu)$ and $(\Span g(N),\mu_B)$ and 
\[
(1+\phi_1(5\varepsilon))^2(1+\varepsilon)^2 \leq (1+3\phi_1(5\varepsilon))(1+3\varepsilon)\leq 1 + 9\max\{\varepsilon,\phi_1(5\varepsilon)\}\leq 1+\tfrac{1}{n'},
\]
we have that $T_g:=id\circ T_B\circ G\circ (T_A)^{-1}\circ id:(\Span N,\mu')\to (\Span M,\nu')$ is a $(1+\tfrac{1}{n'})$-isomorphism.
\end{proof}

Since $\mu\in G(n,n',P,P',g)$, there is a $\Rat$-linear mapping $\Phi:V\cap (\Span M,\nu)\to V$ such that $\mu(\Phi(gx)-x) < \tfrac{2}{n'}\mu(x)$ for every $x\in N$ and $|\nu(x) - \mu(\Phi(x))|<\tfrac{1}{n'}\nu(x)$ for every $x\in M$. It is easy to see that $\Phi$ extends to a bounded linear operator $\Phi':(\Span M,\nu)\to X_\mu$. Finally, consider $\Psi:=\Phi'\circ T_B:B\to X_\mu$.

For every $x\in M$ we have 
\[|\mu(\Phi(x)) - 1| \leq |\mu(\Phi(x)) - \nu(x)|  + |\nu(x) - 1| \stackrel{\eqref{eq:nu}}{\leq} \tfrac{1}{n'}\nu(x) + \varepsilon \stackrel{\eqref{eq:nu}}{\leq} \tfrac{1}{n'}(1+\varepsilon) + \varepsilon< \delta;\]
thus, $|\|\Psi(x)\|_{X_\mu} - 1| < \delta$ for every $x\in (T_B)^{-1}(M)$ and so $\Psi$ is a $(1+\eta)$-isomorphism.

Further, we have 
\[\begin{split}
\|\Psi G(a_i) - a_i\|_{X_\mu} & \leq \|\Phi(g(T_A a_i)) - [T_A a_i]\|_{X_\mu} + \|[T_A a_i] - a_i\|_{X_\mu}\\
& < \tfrac{2}{n'}(1+\varepsilon)+\varepsilon\le\tfrac{4}{n'}+\varepsilon<\delta;
\end{split}\]
hence, by Lemma~\ref{lem:approx2}\ref{it:approxIsometryOnBasis}, we have $\|\Phi' T_B G - I_A\|\leq \phi_2^{\b_A}(\tfrac{2}{n'}(1+\varepsilon)+\varepsilon) < \eta$.
\end{proof}

\begin{theorem}\label{thm:gurariiGDelta}
Let $\mathbb{G}$ be the Gurari\u{\i} space. Then the isometry class $\isomtrclass[\PP_\infty]{\mathbb{G}}$ is a $G_\delta$-set in $\PP_\infty$.
\end{theorem}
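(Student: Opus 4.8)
The plan is to read the statement off directly from Proposition~\ref{prop:characterization}, which expresses
\[
\isomtrclass[\PP_\infty]{\mathbb{G}}=\bigcap_{(n,n',P,P',g)\in T}G(n,n',P,P',g)
\]
as an intersection over the \emph{countable} index set $T$. So it will suffice to prove that each set $G(n,n',P,P',g)$ is $G_\delta$ in $\PP_\infty$; in fact I expect to prove the slightly stronger fact that it is a union of an open set with a closed set, and such a set is automatically $G_\delta$ in the metrizable space $\PP_\infty$ --- if the closed piece is written as $\bigcap_k U_k$ with $U_k$ open, then adjoining the open piece $C$ gives $\bigcap_k(U_k\cup C)$, an intersection of open sets.

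Fix $t=(n,n',P,P',g)\in T$. The condition defining $G(t)$ has the shape ``if $\mu$ satisfies the hypothesis then there is a $\Rat$-linear $\Phi$ satisfying finitely many strict inequalities''. First I would verify that the hypothesis --- $d_{\dom(P)}(P,\mu)<\tfrac1n$ together with ``$\mu|_{\Span(\dom(P))}$ is a norm'' --- carves out an \emph{open} subset $W\subseteq\PP_\infty$: the first clause is the finite conjunction of the continuous conditions $|P(a)-\mu(a)|<\tfrac1n$ over $a\in\dom(P)$, while the second clause says precisely that a fixed basis (of vectors from $V$) of the finite-dimensional subspace $\Span(\dom(P))\subseteq c_{00}$ maps to a linearly independent tuple in $X_\mu$, which is open by the ``in particular'' part of Lemma~\ref{lem:infiniteDimIsGDelta}. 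Next I would check that the conclusion defines an \emph{open} set $C$. Here one observes that $V\cap\Span(\dom(P'))$ is a finitely generated $\Rat$-vector space, so a $\Rat$-linear map $\Phi:V\cap\Span(\dom(P'))\to V$ is determined by finitely many values in the countable set $V$; thus there are only countably many candidates $\Phi$. For each fixed $\Phi$ the displayed requirements $\mu(\Phi(gx)-x)<\tfrac{2}{n'}\mu(x)$ for $x\in\dom(P)$ and $|P'(x)-\mu(\Phi(x))|<\tfrac1{n'}P'(x)$ for $x\in\dom(P')$ become finitely many strict inequalities between continuous functions of the form $\mu\mapsto\mu(v)$ (with $v\in V$ now a fixed vector) and constants, hence cut out an open set $O_\Phi\subseteq\PP_\infty$; then $C=\bigcup_\Phi O_\Phi$ is open.

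Putting this together, $G(t)=(\PP_\infty\setminus W)\cup C$ is a union of a closed set and an open set, hence $G_\delta$ by the first paragraph; and then $\isomtrclass[\PP_\infty]{\mathbb{G}}$, being a countable intersection of $G_\delta$ sets, is $G_\delta$ in $\PP_\infty$. I do not anticipate a genuine obstacle here: all the substantive content is already in Proposition~\ref{prop:characterization}, whose purpose is exactly to reduce ``$X_\mu\equiv\mathbb{G}$'' to countably many conditions of manifestly low Borel complexity. The only points needing a little care are the verification that ``$\mu|_{\Span(\dom(P))}$ is a norm'' is an open condition (supplied by Lemma~\ref{lem:infiniteDimIsGDelta}) and the elementary bookkeeping that $V\cap\Span(\dom(P'))$ is finitely generated over $\Rat$, so that the union defining $C$ is genuinely countable.
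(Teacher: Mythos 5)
Your proof is correct and is essentially the paper's own argument: the paper likewise writes $\isomtrclass[\PP_\infty]{\mathbb{G}}$ as the countable intersection of the sets $G(n,n',P,P',g)$ via Proposition~\ref{prop:characterization} and observes that each such set is a union of a closed set (the complement of the open hypothesis set, with openness of the "restriction is a norm" clause supplied by Lemma~\ref{lem:infiniteDimIsGDelta}) and an open set, hence $G_\delta$. Your extra bookkeeping (countability of the candidate maps $\Phi$, the explicit $\bigcap_k(U_k\cup C)$ decomposition) just fills in details the paper leaves implicit.
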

\begin{proof}
By Proposition~\ref{prop:characterization}, we have for the countable set $T$ defined before Proposition~\ref{prop:characterization} that
\[\isomtrclass[\PP_\infty]{\mathbb{G}} = \bigcap_{(n,n',P,P',g)\in T} G(n,n',P,P',g),\]
where $G(n,n',P,P',g)$ is the union of a closed and an open set in $\PP_\infty$ (here we use the observation that the set $\{\mu\in\PP_\infty\colon \mu\text{ restricted to $\Span(\dom P)\subseteq c_{00}$ is a norm}\}$ is open due to Lemma~\ref{lem:infiniteDimIsGDelta}); thus it is the countable intersection of $G_\delta$-sets.
\end{proof}

\begin{proof}[Proof of Theorem~\ref{thm:gurariiTypicalInP}]
Let us recall that $\PP_\infty$ and $\B$ are $G_\delta$-sets in $\PP$, see Corollary \ref{cor:polishTopologies}. Thus, since we have $\isomtrclass[\B]{\mathbb{G}} = \isomtrclass[\PP_\infty]{\mathbb{G}}\cap \B$, it follows from Proposition~\ref{prop:characterization} that $\isomtrclass[\mathcal I]{\mathbb{G}}$ is a $G_\delta$-set in any $\mathcal{I}\in\{\PP,\PP_\infty,\B\}$.

By Corollary~\ref{cor:dense} we also have that $\isomtrclass[\mathcal I]{\mathbb{G}}$ is dense in $\mathcal I$ for every $\mathcal{I}\in\{\PP,\PP_\infty,\B\}$.
\end{proof}

\subsection{Generic objects in $SB(X)$}
In this subsection, we address Problem 5.5 from \cite{GS18} which suggests to investigate generic properties of admissible topologies. We have both positive and negative results. The positive result is Theorem~\ref{thm:GurariiinWijsman} which shows that the isometry class of the Gurari\u{\i} space, as a subset of $SB(\mathbb{G})$, is a dense $G_\delta$-set in the Wijsman topology.
The negative results are Propositions~\ref{prop:Bairecategoryinadmissible} and~\ref{prop:Baireadmiss3}, and Theorem \ref{thm:Baireadmiss2}.
\begin{definition}
Given a closed set $H$ in $X$ we denote by $E^-(H)$ the set $SB(X)\setminus E^+(X\setminus H)$, that is, $E^-(H) = \{F\in SB(X)\setsep F\subseteq H\}$. Obviously, this is a closed set in any admissible topology on $SB(X)$.
\end{definition}

\begin{definition}
Let $X$ be an isometrically universal separable Banach space. By $\tau_W$ we denote the restriction of \emph{the Wijsman topology} from $\F(X)$ to $SB(X)$, that is, the minimal topology on $SB(X)$ such that the mappings $SB(X)\ni F\mapsto \dist_{X}(x,F)$ are continuous for every $x\in X$. Note that $\tau_W$ is admissible, see \cite[Section 2]{GS18}.
\end{definition}

\begin{theorem}\label{thm:GurariiinWijsman}
The isometry class $\isomtrclass{\mathbb{G}}$ is a dense $G_\delta$-set in $(SB(\mathbb{G}),\tau_W)$.
\end{theorem}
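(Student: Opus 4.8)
The strategy is to establish two things separately: that $\isomtrclass{\mathbb{G}}$ is $G_\delta$ in $(SB(\mathbb{G}),\tau_W)$, and that it is dense. For the $G_\delta$ part, I would transfer the characterization already obtained in the space of pseudonorms. Recall that in $(SB(\mathbb{G}),\tau_W)$ the maps $F\mapsto\dist_{\mathbb{G}}(x,F)$ are continuous for every $x\in\mathbb{G}$, and by \cite[Theorem 4.1]{GS18} (or directly from separability of $\mathbb{G}$) there are continuous selectors $f_n:(SB(\mathbb{G}),\tau_W)\to\mathbb{G}$ with $\overline{\{f_n(F):n\in\Nat\}}=F$. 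Composing as in the proof of Theorem~\ref{thm:reductionFromSBToP}, one gets a continuous map $\Phi:(SB(\mathbb{G}),\tau_W)\to\PP$ with $F\equiv X_{\Phi(F)}$. Since $\Phi^{-1}(\isomtrclass[\PP]{\mathbb{G}})=\isomtrclass{\mathbb{G}}$ and $\isomtrclass[\PP]{\mathbb{G}}$ is $G_\delta$ in $\PP$ (this follows from Theorem~\ref{thm:gurariiGDelta} together with Corollary~\ref{cor:polishTopologies} and the fact that $\isomtrclass[\PP]{\mathbb{G}}\subseteq\PP_\infty$ automatically, as $\mathbb{G}$ is infinite-dimensional), continuity of $\Phi$ gives that $\isomtrclass{\mathbb{G}}$ is $G_\delta$ in $(SB(\mathbb{G}),\tau_W)$.

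\textbf{Density.} This is the heart of the matter and where the Wijsman topology (as opposed to an arbitrary admissible topology) is used. A basic $\tau_W$-neighborhood of a given $F_0\in SB(\mathbb{G})$ has the form $\{F:|\dist(x_i,F)-\dist(x_i,F_0)|<\varepsilon,\ i\le m\}$ for finitely many $x_1,\dots,x_m\in\mathbb{G}$ and $\varepsilon>0$. I want to produce inside this neighborhood a subspace isometric to $\mathbb{G}$. The idea is: first replace $F_0$ by a finite-dimensional approximant — pick a finite-dimensional $E\subseteq F_0$ whose unit ball is dense enough in $B_{F_0}$ near the relevant scales so that $\dist(x_i,E)$ is close to $\dist(x_i,F_0)$ for all $i$ (this is possible since $F_0$ is separable and $\dist(x_i,\cdot)$ only sees finitely many points). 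Then, using the universality and homogeneity properties of $\mathbb{G}$ from Definition~\ref{def:Gurarii}, extend $E$ inside $\mathbb{G}$ to a subspace $G$ which is isometric to $\mathbb{G}$ and which is ``close to'' $E$ in the sense that $E$ is almost isometrically contained in $G$ and $G$ does not come much closer to any $x_i$ than $E$ does; concretely one builds $G$ as the closed span of an increasing chain $E=E_0\subseteq E_1\subseteq\cdots$ of finite-dimensional spaces realizing the Gurari\u{\i} extension property, chosen so that the chain stays within a controlled Hausdorff-type distance from $E$ on the finitely many test functionals needed to compute $\dist(x_i,\cdot)$. Since $\dist(x_i,G)\le\dist(x_i,E)$ always, and $\dist(x_i,G)\ge\dist(x_i,E)-(\text{small})$ by the controlled construction, we get $|\dist(x_i,G)-\dist(x_i,F_0)|<\varepsilon$, so $G$ lies in the prescribed neighborhood. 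Because $G\equiv\mathbb{G}$, this witnesses density.

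\textbf{Main obstacle.} The delicate point is the ``controlled extension'': when we enlarge $E$ to a Gurari\u{\i}-like space inside the ambient $\mathbb{G}$, we must prevent the new vectors from getting substantially closer to the test points $x_i$ than $E$ already is. Abstractly, $\dist(x,\cdot)$ is monotone decreasing under enlargement, so the only danger is an uncontrolled decrease; the fix is to carry out the Gurari\u{\i} amalgamation not in all of $\mathbb{G}$ but relative to a fixed finite-dimensional space containing (the projections of) the $x_i$'s and $E$, so that at each step the amalgam is taken ``away from'' the directions of the $x_i$'s — equivalently, one works with norm-preserving extensions of the functionals that compute $\dist(x_i,E)$. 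Making this precise requires a careful bookkeeping of finitely many Hahn--Banach functionals through the inductive construction, analogous in spirit to (though much lighter than) the functional-extension arguments in Proposition~\ref{propspojvnorcharakt} and Lemma~\ref{lemmspojvnor}. Once that local step is in place, the global density statement and the $G_\delta$ statement combine to give that $\isomtrclass{\mathbb{G}}$ is dense $G_\delta$ in $(SB(\mathbb{G}),\tau_W)$, completing the proof.
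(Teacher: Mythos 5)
Your $G_\delta$ half is correct and is exactly the paper's argument: the continuous reduction $\Phi:(SB(\mathbb{G}),\tau_W)\to\PP$ of Theorem~\ref{thm:reductionFromSBToP} pulls back $\isomtrclass[\PP]{\mathbb{G}}$, which is $G_\delta$ in $\PP$, to $\isomtrclass{\mathbb{G}}$. No issues there.

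The density half, however, has a genuine gap precisely at the point you flag as the ``main obstacle.'' Your plan is to grow $E$ to a copy of $\mathbb{G}$ by an increasing chain of finite-dimensional Gurari\u{\i} extensions realized inside the ambient $\mathbb{G}$, while keeping the finitely many Hahn--Banach functionals $f_i$ (with $f_i|_{F_0}=0$, $f_i(x_i)=\dist(x_i,F_0)$) small on every stage of the chain. But the extension property of Definition~\ref{def:Gurarii} only produces \emph{some} $(1+\varepsilon)$-isomorphic copy of each extension inside $\mathbb{G}$; it gives no control whatsoever over where that copy lands relative to $\bigcap_i\ker f_i$. To make your induction work you would need to realize each Gurari\u{\i} extension (approximately) inside a prescribed finite-codimensional subspace of $\mathbb{G}$, and nothing in the paper or in your sketch justifies that this is possible. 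So the step ``the amalgam is taken away from the directions of the $x_i$'s'' is not a proof but a restatement of what needs to be proved, and the induction as described cannot be carried out.

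The paper closes this gap by a one-shot external construction rather than an internal induction. With $Y=\Span\{X\cup\{x_1,\dots,x_n\}\}$ (where $X=F_0$ is the given subspace), form the push-out $G'=(\mathbb{G}\oplus_1 Y)/Z$ with $Z=\{(z,-z)\colon z\in X\}$, i.e.\ the amalgamated sum of $Y$ and $\mathbb{G}$ over $X$. In $G'$ one computes directly that $\dist_{G'}(x_i,\mathbb{G})=\dist_{\mathbb{G}}(x_i,X)$ — the amalgamation forces, by construction of the quotient norm, that the big copy of $\mathbb{G}$ comes no closer to the $x_i$ than $X$ does, which is exactly the control your induction was trying to engineer. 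Then one embeds $G'$ isometrically into $\mathbb{G}$ (universality) and applies the approximate homogeneity of $\mathbb{G}$ (Kubi\'s--Solecki, for the finite-dimensional subspace $\Span\{x_i\colon i\le n\}$) to find a surjective isometry of $\mathbb{G}$ moving the images of the $x_i$ back to within $\varepsilon$ of themselves; the image of $\mathbb{G}\subseteq G'$ under this composition is the desired member of $\isomtrclass{\mathbb{G}}$ in the given neighborhood. If you want to salvage your approach, you should replace the inductive bookkeeping by this amalgamation-plus-homogeneity argument; note also that it works directly with the (possibly infinite-dimensional) $X$, so your preliminary reduction to a finite-dimensional $E\subseteq F_0$ is unnecessary.
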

\begin{proof}
The isometry class $\isomtrclass{\mathbb{G}}$ is a $G_\delta$-set in $(SB(\mathbb{G}),\tau_W)$ since it is a $G_\delta$-set in $\PP$ (by Theorem~\ref{thm:gurariiTypicalInP})
and there is a continuous reduction from $(SB(\mathbb{G}),\tau_W)$ to $\PP$ by Theorem~\ref{thm:reductionFromSBToP}. So we must show that it is dense.

Choose a basic open set $N$ in $\tau_W$ which is given by some closed subspace $X\subseteq\mathbb{G}$, finitely many points $x_1,\ldots,x_n\in \mathbb{G}$ and $\varepsilon>0$ so that $$N=\{Z\in SB(\mathbb{G})\colon \forall i\leq n\; (| \dist_{\mathbb{G}}(x_i,X)-\dist_{\mathbb{G}}(x_i,Z)|<\varepsilon)\}.$$ Let us find a space $G$ isometric to $\mathbb{G}$ such that $G\in N$. Let $Y$ be $\Span \{X\cup \{x_i\colon i\leq n\}\}$. Since $X$ embeds into both $Y$ and $\mathbb{G}$ we can consider the push-out of that diagram, i.e. the amalgamated sum of $Y$ and $\mathbb{G}$ along the common subspace $X$. Recall this is nothing but the quotient $(\mathbb{G}\oplus_1 Y)/Z$, where $Z=\{(z,-z)\colon z\in X\}$. Denote this space by $G'$ and notice that $\mathbb G$ is naturally embedded into $G'$. It is straightforward to verify that for each $i\leq n$, $\dist_{G'}(x_i,\mathbb{G})=\dist_{\mathbb{G}} (x_i,X)$. Since $\mathbb{G}$ is universal, there is a linear isometric embedding $\iota: G'\hookrightarrow \mathbb{G}$. As there is a linear isometry $\phi:\iota[\Span\{x_i\colon i\leq n\}]\rightarrow \Span\{x_i\colon i\leq n\}$, by \cite[Theorem 1.1]{KubisSolecki} there is a bijective linear isometry $\Phi:\mathbb{G}\rightarrow \mathbb{G}$ such that $\|\Phi\circ \iota(x_i)-x_i\|<\varepsilon$, for each $i\leq n$. By the triangle inequality, it follows that $G:=\Phi\circ\iota[\mathbb{G}\subseteq G']$ satisfies for each $i\leq n$, $|\dist_{\mathbb{G}}(x_i,G)-\dist_{\mathbb{G}}(x_i,X)|<\varepsilon$, so it is the desired space isometric to $\mathbb{G}$ lying in the open set $N$.
\end{proof}

The rest of the section is devoted to negative results. They show that the definition of an admissible topology allows a lot of flexibility by which one can alter which properties should be meager or not.
\begin{proposition}\label{prop:Bairecategoryinadmissible}
Let $X$ be an isometrically universal separable Banach space and let $\tau$ be an admissible topology on $SB(X)$. Then there exists an admissible topology $\tau'\supseteq \tau$  on $SB(X)$ such that the set $\isomrfclass{\Gurarii}$ is nowhere dense in $(SB_\infty(X),\tau')$.
\end{proposition}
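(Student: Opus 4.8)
\medskip

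The plan is to pin $\isomrfclass{\Gurarii}$ inside a $\tau'$-closed set whose complement we arrange to be open and dense by enlarging $\tau$. Put
\[
\mathcal{G}:=\{F\in SB(X)\setsep F\text{ is isomorphic to a subspace of }\ell_2\oplus E\text{ for some finite-dimensional }E\}.
\]
Two facts drive everything. First, $\isomrfclass{\Gurarii}\cap\mathcal{G}=\emptyset$: $\ell_2$ embeds into $\Gurarii$, so a space isomorphic to $\Gurarii$ lying in $\mathcal{G}$ would yield an embedding $\Gurarii\hookrightarrow\ell_2\oplus E\simeq\ell_2$, which is impossible since $\Gurarii$ contains $c_0$ (indeed the $\ell_1^n$'s uniformly) whereas a Hilbert space does not; this is precisely the instance $Y=\ell_2$, $Z=\Gurarii$ of the hypotheses $Y\hookrightarrow Z$ and $Z\not\hookrightarrow Y\oplus F$ of Theorem~\ref{thm:Baireadmiss2}. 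Second, $\mathcal{G}$ is dense in $(SB(X),\sigma)$ for \emph{every} admissible topology $\sigma$, because $\mathcal{G}$ contains every finite-dimensional subspace of $X$ (each such space sits inside $\ell_2\oplus E$ with $E$ equal to that space), and finite-dimensional subspaces are dense in any admissible topology: given a nonempty basic open set containing $F^{\ast}$, the span of finitely many suitably chosen points of $F^{\ast}$ again lies in that set, by the form of admissible subbasic sets $E^+(U)\setminus E^+(V)$. For the same reason, completed with an approximation by the infinite-dimensional members $\ell_\infty^N\oplus_\infty\ell_2\simeq\ell_2\oplus\ell_\infty^N$ of $\mathcal{G}$, the set $\mathcal{G}\cap SB_\infty(X)$ is dense in $SB_\infty(X)$.

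Granting an admissible $\tau'\supseteq\tau$ in which $\mathcal{G}$ is open, the conclusion is immediate: $\mathcal{G}$ is then open and dense in $(SB(X),\tau')$, hence $\mathcal{G}\cap SB_\infty(X)$ is open and dense in $SB_\infty(X)$, so $SB_\infty(X)\setminus\mathcal{G}$ is closed with dense complement, i.e. nowhere dense, in $(SB_\infty(X),\tau')$, and $\isomrfclass{\Gurarii}\subseteq SB_\infty(X)\setminus\mathcal{G}$. It thus remains to build $\tau'$. One first checks that $\mathcal{G}$ is Borel — in fact $F_\sigma$ in $\tau$ — by pulling it back along the continuous map $\Phi\colon(SB(X),\tau)\to\PP$ of Theorem~\ref{thm:reductionFromSBToP} (for which $X_{\Phi(F)}\equiv F$) from the corresponding subset of $\PP$, which is $F_\sigma$ thanks to a ``local and finitary'' description of being Hilbertian modulo finitely many dimensions: for fixed integers $d,K$, the condition ``every finitely generated subspace of $X_\mu$ has a subspace of codimension $\le d$ that is $K$-Euclidean'' is the complement of a $G_\delta$ set, since by Lemma~\ref{lem:infiniteDimIsGDelta} and Lemma~\ref{lem:approx} its inner quantifier may be run over a countable dense family of vectors of $V$. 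One then enlarges $\tau$ so that $\mathcal{G}$ becomes open while keeping the topology Polish and admissible, i.e. while keeping a subbasis each member of which is a countable union of sets $E^+(U)\setminus E^+(V)$; here one exploits that sets $E^+(X)\setminus E^+(V)=\{F\setsep F\subseteq X\setminus V\}$, with $X\setminus V$ a ``conical'' closed set of the shape $\{z\setsep\dist_X(z,W)\le\varepsilon\lVert z\rVert\}$ built around a countable dense family of finite-codimensional Hilbertian model subspaces $W\subseteq X$, are admissible building blocks, and that refining a Polish topology by countably many Borel sets again yields a Polish topology with the same Borel structure.

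The main obstacle is exactly this last step: reconciling the requirement that the newly declared open sets genuinely capture the structural property $\mathcal{G}$ — whose Borel rank is higher than that of a single set $E^+(U)\setminus E^+(V)$, so that $\mathcal{G}$ must be approached through a carefully chosen countable family rather than added wholesale — with the requirements that they be expressible in the admissible subbasic form, leave the topology Polish, and keep $\mathcal{G}\cap SB_\infty(X)$ dense in the subspace $SB_\infty(X)$ (which must be verified for the chosen $\tau'$, not just for some admissible topology). This is the type of encoding systematized by Theorem~\ref{thm:Baireadmiss2}, of which the present statement is the case $Y=\ell_2$, $Z=\Gurarii$; I would expect the honest proof to isolate a lemma asserting that, for sufficiently ``rigid'' $Y$ (here $Y$ Hilbertian), the class of spaces embedding into $Y\oplus(\text{finite-dimensional})$ can be made open by an admissible refinement, and then to apply it.
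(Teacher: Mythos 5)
There is a genuine gap, and it sits at the heart of your strategy: the density of $\mathcal{G}$ in $SB_\infty(X)$. Note first that every infinite-dimensional closed subspace of $\ell_2\oplus E$ ($E$ finite-dimensional) is isomorphic to $\ell_2$, so $\mathcal{G}\cap SB_\infty(X)=\isomrfclass{\ell_2}$. Your density argument for $SB(X)$ (spanning finitely many points of a witness $F^{\ast}$ chosen inside the sets $U_i$ of a block $\bigcap_i E^+(U_i)\setminus E^+(V_i)$) is fine, but it produces finite-dimensional spaces; to get an infinite-dimensional member of $\mathcal{G}$ inside the block you must place a copy of $\ell_2$ inside $X\setminus V_i$, and the only handle you have on $X\setminus V_i$ is that it contains $F^{\ast}$. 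Since not every infinite-dimensional Banach space contains an isomorphic copy of $\ell_2$ (Tsirelson's space, $\ell_1$), this fails. Concretely, $\tau$ is an \emph{arbitrary} admissible topology in the statement: by Proposition~\ref{prop:Baireadmiss3} applied to the hereditary non-void property ``$\ell_2\not\hookrightarrow Y$'' (or simply by adjoining $E^-(Z)$ for such a $Z$), $\tau$ may already contain a nonempty open subset of $SB_\infty(X)$ consisting entirely of spaces not containing $\ell_2$; no refinement $\tau'\supseteq\tau$ can make $\isomrfclass{\ell_2}$ dense again. So the single structural class $\mathcal{G}$ is too small to do the job, independently of the (also unproved, and acknowledged) step of making $\mathcal{G}$ open by an admissible refinement.

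The paper's proof avoids both difficulties by localizing. For each basic $\tau$-open set $U_n$ it shrinks to a nonempty block $U_n'=\bigcap_k E^+(V^n_k)\setminus E^+(W_n)$ and finds a single witness $F_n\in U_n'$ with $\Gurarii\not\hookrightarrow F_n$ via a dichotomy: either some $Z\in U_n'$ already satisfies $\Gurarii\not\hookrightarrow Z$ (take $F_n=Z$), or $Z$ contains a copy of $\Gurarii$, hence of $\ell_2$, and one takes $F_n=\closedSpan{v_1,\dots,v_{N_n},u\setsep u\in H_1}$ with $v_k\in Z\cap V^n_k$ and $H_1\simeq\ell_2$ inside $Z$ — a space isomorphic to $\ell_2$ and still a subspace of $Z$, hence still in $U_n'$. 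The relevant dense hereditary class is thus $\{F\setsep\Gurarii\not\hookrightarrow F\}$, not $\isomrfclass{\ell_2}$. One then adjoins the closed sets $E^-(F_n)=E^+(X)\setminus E^+(X\setminus F_n)$ (which, as you correctly observe, are of the admissible building-block form) to obtain a Polish admissible $\tau'$; the nonempty sets $U_n\cap\bigcap_{m\in I}E^-(F_m)$ with $I\neq\emptyset$ form a $\pi$-base of $\tau'$ disjoint from $\isomrfclass{\Gurarii}$, which gives nowhere density directly without ever exhibiting a single open dense set avoiding $\isomrfclass{\Gurarii}$. If you want to salvage your write-up, replace $\mathcal{G}$ by $\{F\setsep\Gurarii\not\hookrightarrow F\}$ and replace ``make it open'' by ``build a $\pi$-base inside it''.
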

\begin{proof}
By the definition of an admissible topology, we may pick $(U_n)_{n\in\N}$, a basis of the topology $\tau$, such that for every $n\in\N$ there are nonempty open sets $V_k^n$, $k=1,\ldots,N_n$, and $W_n$ in $X$ such that the set $U_n'$ defined by
\[
U_n' = \bigcap_{k=1}^{N_n} E^+(V_k^n)\setminus E^+(W_n)
\]
is a nonempty subset of $U_n$.

We \emph{claim} that for every $n\in\N$ there is $F_n\in U_n'$ such that $\Gurarii\not\hookrightarrow F_n$. Indeed, pick an arbitrary $Z\in U_n'$. We may without loss of generality assume there is $H_0\subseteq Z$ with $H_0\simeq \Gurarii$ and since $\Gurarii$ is isometrically universal, there is $H_1\subseteq H_0$ with $H_1\simeq \ell_2$. Now, pick points $v_k\in Z\cap V_k^n$, $k=1,\ldots,N_n$. Then we put $F_n:= \closedSpan{v_1,\ldots,v_{N_n},u\setsep u\in H_1}$. Since $F_n$ is a subset of $Z$, we have $F_n\notin E^+(W_n)$ and since it contains the points $v_1,\ldots,v_{N_n}$, we have $F_n\in U_n'$. Moreover, it is a space isomorphic to $\ell_2$ and so $\Gurarii\not\hookrightarrow F_n$.

Thus, for every $n\in\N$ there is a closed subspace $F_n$ of $X$ such that $U_n\cap E^-(F_n)$ is a nonempty set disjoint from $\isomrfclass{\Gurarii}$.

It is a classical fact, see e.g., \cite[Lemma 13.2 and Lemma 13.3]{KechrisBook}, that the topology $\tau'$ generated by $\tau\cup\{E^-(F_n)\setsep n\in\N\}$ is Polish. It is easy to check it is admissible. Moreover, for every $n\in\N$ we have that $U_n\cap E^-(F_n)$ is a nonempty $\tau'$-open set in $U_n$ disjoint from $\isomrfclass{\Gurarii}$. It follows that nonempty sets of the form $U_n\cap \bigcap_{m\in I}E^-(F_m)$, for finite $I\subseteq \Nat$, give us a $\pi$-basis of $\tau'$. Since obviously each element of the form $U_n\cap \bigcap_{m\in I} E^-(F_m)$ is disjoint from $\isomrfclass{\Gurarii}$, the set $\isomrfclass{\Gurarii}$ is $\tau'$-nowhere dense.

\end{proof}

Actually, one may observe that the same proof gives the following more general result, where the pair $(\mathbb{G},\ell_2)$ is replaced by a more general pair of Banach spaces.

\begin{theorem}\label{thm:Baireadmiss2}
Let $X$ be an isometrically universal separable Banach space and let $\tau$ be an admissible topology on $SB(X)$. Let $Y$ and $Z$ be infinite-dimensional Banach spaces such that $Y\hookrightarrow Z$ and $Z\not\hookrightarrow Y\oplus F$ for every finite-dimensional space $F$.

Then there exists an admissible topology $\tau'\supseteq \tau$ on $SB(X)$ such that the set $\isomrfclass{Z}$ is nowhere dense in $(SB_\infty(X),\tau')$.
\end{theorem}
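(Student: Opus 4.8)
The plan is to repeat the proof of Proposition~\ref{prop:Bairecategoryinadmissible} essentially verbatim, with the pair $(\Gurarii,\ell_2)$ replaced throughout by $(Z,Y)$; the only genuinely new point is the construction of the closed subspaces witnessing nowhere density, and this is precisely where the two hypotheses on $Y$ and $Z$ enter. First I would fix, using the definition of an admissible topology exactly as in the proof of Proposition~\ref{prop:Bairecategoryinadmissible}, a basis $(U_n)_{n\in\N}$ of $\tau$ together with nonempty open sets $V^n_1,\dots,V^n_{N_n},W_n\subseteq X$ such that
\[
U'_n:=\bigcap_{k=1}^{N_n}E^+(V^n_k)\setminus E^+(W_n)
\]
is a nonempty subset of $U_n$.

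The core step is to show that for every $n$ there is a closed subspace $F_n\subseteq X$ with $F_n\in U'_n$ (so, in particular, $F_n$ infinite-dimensional) and $Z\not\hookrightarrow F_n$. To do this I would pick $Z_0\in U'_n$ and, arguing as in Proposition~\ref{prop:Bairecategoryinadmissible}, assume without loss of generality that $Z$ embeds into $Z_0$. Since $Y\hookrightarrow Z\hookrightarrow Z_0$, fix a subspace $H_1\subseteq Z_0$ isomorphic to $Y$, pick $v_k\in Z_0\cap V^n_k$ for $k\le N_n$, and set $F_n:=\closedSpan{v_1,\dots,v_{N_n},u\setsep u\in H_1}$. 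Then $F_n\subseteq Z_0$, hence $F_n\notin E^+(W_n)$, while $v_k\in F_n\cap V^n_k$ gives $F_n\in E^+(V^n_k)$ for every $k$; thus $F_n\in U'_n$, and $F_n$ is infinite-dimensional because it contains $H_1\simeq Y$. To see that $Z\not\hookrightarrow F_n$, note that $H_1$ has finite codimension in $F_n$, so $F_n/H_1$ is finite-dimensional; the short exact sequence $0\to H_1\to F_n\to F_n/H_1\to 0$ therefore splits, and $F_n\simeq H_1\oplus(F_n/H_1)\simeq Y\oplus F$ with $F:=F_n/H_1$ finite-dimensional. By the hypothesis $Z\not\hookrightarrow Y\oplus F$ we obtain $Z\not\hookrightarrow F_n$, so $E^-(F_n)\cap\isomrfclass{Z}=\emptyset$ while $F_n\in U_n\cap E^-(F_n)\cap SB_\infty(X)$.

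From here the argument is word-for-word that of Proposition~\ref{prop:Bairecategoryinadmissible}: the topology $\tau'$ generated by $\tau\cup\{E^-(F_n):n\in\N\}$ is Polish by \cite[Lemma 13.2 and Lemma 13.3]{KechrisBook} and is easily checked to be admissible, and the nonempty sets of the form $U_n\cap\bigcap_{m\in I}E^-(F_m)$, $I\subseteq\N$ finite, form a $\pi$-basis of $\tau'$ whose members all miss $\isomrfclass{Z}$; hence $\isomrfclass{Z}$ is $\tau'$-nowhere dense in $SB_\infty(X)$. The hard part — in fact the only part that differs from Proposition~\ref{prop:Bairecategoryinadmissible} — is the construction of $F_n$: one must place, inside an arbitrary member of $U'_n$, an infinite-dimensional subspace whose isomorphic type forbids containing $Z$ even after adjoining the finitely many vectors $v_k$ forced on us by $U'_n$, and the clause ``$Z\not\hookrightarrow Y\oplus F$ for every finite-dimensional $F$'' is tailored exactly to make this work, while ``$Y\hookrightarrow Z$'' is what lets us find the copy $H_1$ of $Y$ inside $Z_0$. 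Everything else — the passage to $U'_n$, the reduction to the case $Z\hookrightarrow Z_0$, and the Baire-category bookkeeping — transfers unchanged.
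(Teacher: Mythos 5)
Your proposal is correct and is exactly the argument the paper intends: the authors state Theorem~\ref{thm:Baireadmiss2} as the outcome of running the proof of Proposition~\ref{prop:Bairecategoryinadmissible} with $(\Gurarii,\ell_2)$ replaced by $(Z,Y)$, and you have correctly identified the one place where the new hypotheses are needed, namely producing $F_n\in U_n'$ with $Z\not\hookrightarrow F_n$ by taking $H_1\simeq Y$ inside $Z_0$, adjoining the finitely many vectors $v_k$, and invoking $F_n\simeq Y\oplus F$ together with $Z\not\hookrightarrow Y\oplus F$. The remaining Baire-category bookkeeping indeed transfers verbatim.
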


It is even possible to find an admissible topology $\tau$ such that $\isomtrclass{\ell_2}$ is not a meager set in $(SB_\infty(X),\tau)$ which is an immediate consequence of the following more general observation (the property $(P)$ below would be ``$X$ is isometric to $\ell_2$'').
\begin{proposition}\label{prop:Baireadmiss3}
Let $X$ be an isometrically universal separable Banach space and $\tau$ be an admissible topology on $SB_\infty(X)$. Let $(P)$ be a non-void property (i,e. there are spaces with such a property) of infinite-dimensional Banach spaces closed under taking subspaces. Then there is an admissible topology $\tau'\supseteq \tau$ such that the set $\{Y\in SB_\infty(X)\colon Y\text{ has }(P)\}$ has non-empty interior in $(SB_\infty(X),\tau')$.
\end{proposition}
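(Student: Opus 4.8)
The goal is to enlarge an admissible topology $\tau$ on $SB_\infty(X)$ so that the collection of subspaces having property $(P)$ acquires nonempty interior, while keeping admissibility. Since $(P)$ is non-void, fix one infinite-dimensional $Z_0\subseteq X$ with property $(P)$ (which exists because $X$ is isometrically universal and $(P)$ is non-void; more precisely, take any Banach space with $(P)$ and embed it isometrically into $X$). The idea, exactly as in the proof of Proposition~\ref{prop:Bairecategoryinadmissible}, is to throw in one closed set of the form $E^-(H)$ for a suitable closed $H\subseteq X$, which will become a nonempty $\tau'$-open set contained entirely in $\{Y\in SB_\infty(X)\setsep Y\text{ has }(P)\}$.

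\textbf{Key steps.} First, I would pick the closed set $H:=Z_0$ (the underlying closed subspace of $X$), and consider $\tau'$ to be the topology generated by $\tau\cup\{E^-(Z_0)\}$. By the classical fact used already in the proof of Proposition~\ref{prop:Bairecategoryinadmissible} (see \cite[Lemma~13.2 and Lemma~13.3]{KechrisBook}), adding a single closed set to a Polish topology yields again a Polish topology, so $\tau'$ is Polish. Second, I would check that $\tau'$ is admissible: condition (i) of admissibility is inherited from $\tau$ since every $E^+(U)$ is already $\tau$-open hence $\tau'$-open; for condition (ii), a subbasis of $\tau'$ is obtained by taking a subbasis of $\tau$ together with the single set $E^-(Z_0)=SB(X)\setminus E^+(X\setminus Z_0)$, and since $E^-(Z_0)=E^+(X)\setminus E^+(X\setminus Z_0)$ is of the required form $E^+(U)\setminus E^+(V)$ with $U=X$, $V=X\setminus Z_0$ open, the subbasis condition is met. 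Third — and this is the point of the whole argument — I observe that $E^-(Z_0)\cap SB_\infty(X)$ is a nonempty $\tau'$-open subset of $SB_\infty(X)$ contained in $\{Y\in SB_\infty(X)\setsep Y\text{ has }(P)\}$: it is nonempty because $Z_0$ itself belongs to it (it is infinite-dimensional, and $Z_0\subseteq Z_0$), and every $Y\in E^-(Z_0)$ satisfies $Y\subseteq Z_0$, hence $Y$ has $(P)$ because $(P)$ passes to subspaces. Thus this open set witnesses that $\{Y\in SB_\infty(X)\setsep Y\text{ has }(P)\}$ has nonempty interior in $(SB_\infty(X),\tau')$.

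\textbf{Main obstacle.} There is very little genuine difficulty here; the only point requiring a little care is the verification that $\tau'$ remains admissible, in particular that condition (ii) survives after adding $E^-(Z_0)$ to the subbasis. This is routine once one notices $E^-(Z_0)$ has the form $E^+(U)\setminus E^+(V)$, and that condition (ii) is closed under adding finitely (even countably) many such sets to a subbasis. One should also note the harmless abuse: $E^-$ was defined on $SB(X)$, and we restrict everything to the $G_\delta$ subset $SB_\infty(X)$ (which is admissible in its own right by Remark~\ref{rem:polishTopologies}); the intersection of an admissible topology on $SB(X)$ with $SB_\infty(X)$ is admissible on $SB_\infty(X)$, and the same construction applies verbatim there. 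No further estimates or constructions of Banach spaces are needed.
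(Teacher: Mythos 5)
Your proof is correct and follows essentially the same route as the paper: fix one infinite-dimensional subspace $Z_0$ with property $(P)$, adjoin the single closed set $E^-(Z_0)$ to $\tau$ using the classical fact that this keeps the topology Polish, check admissibility, and observe that the resulting nonempty open set $E^-(Z_0)\cap SB_\infty(X)$ lies inside the class of spaces with $(P)$. Your extra detail on verifying condition (ii) of admissibility (writing $E^-(Z_0)=E^+(X)\setminus E^+(X\setminus Z_0)$) is a welcome elaboration of what the paper leaves as "easy to check".
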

\begin{proof}
Pick $F\in SB_\infty(X)$ with $(P)$. Using again the classical fact, see e.g. \cite[Lemma 13.2]{KechrisBook}, that the topology $\tau'$ generated by $\tau\cup\{E^-(F)\}$ is Polish, it is easy to check it is admissible. Then the $\tau'$-open set $E^-(F)$ is a subset of $\{Y\in SB_\infty(X)\colon Y\text{ has }(P)\}$.
\end{proof}

\bibliographystyle{siam}
\bibliography{ref}

\begin{thebibliography}{10}

\bibitem{albiacKniha}
{\sc F.~Albiac and N.~J. Kalton}, {\em Topics in {B}anach space theory},
  vol.~233 of Graduate Texts in Mathematics, Springer, [Cham], second~ed.,
  2016.
\newblock With a foreword by Gilles Godefroy.

\bibitem{ArgDod}
{\sc S.~A. Argyros and P.~Dodos}, {\em Genericity and amalgamation of classes
  of {B}anach spaces}, Adv. Math., 209 (2007), pp.~666--748.

\bibitem{Ya15}
{\sc I.~Ben~Yaacov}, {\em Fra\"{\i}ss\'{e} limits of metric structures}, J.
  Symb. Log., 80 (2015), pp.~100--115.

\bibitem{BYBHU}
{\sc I.~Ben~Yaacov, A.~Berenstein, C.~W. Henson, and A.~Usvyatsov}, {\em Model
  theory for metric structures}, in Model theory with applications to algebra
  and analysis. {V}ol. 2, vol.~350 of London Math. Soc. Lecture Note Ser.,
  Cambridge Univ. Press, Cambridge, 2008, pp.~315--427.

\bibitem{BYDNT}
{\sc I.~Ben~Yaacov, M.~Doucha, A.~Nies, and T.~Tsankov}, {\em Metric {S}cott
  analysis}, Adv. Math., 318 (2017), pp.~46--87.

\bibitem{Bossard93}
{\sc B.~Bossard}, {\em Codages des espaces de {B}anach s\'{e}parables.
  {F}amilles analytiques ou coanalytiques d'espaces de {B}anach}, C. R. Acad.
  Sci. Paris S\'{e}r. I Math., 316 (1993), pp.~1005--1010.

\bibitem{Bo02}
\leavevmode\vrule height 2pt depth -1.6pt width 23pt, {\em A coding of
  separable {B}anach spaces. {A}nalytic and coanalytic families of {B}anach
  spaces}, Fund. Math., 172 (2002), pp.~117--152.

\bibitem{Bourgain2}
{\sc J.~Bourgain}, {\em On convergent sequences of continuous functions}, Bull.
  Soc. Math. Belg. S\'{e}r. B, 32 (1980), pp.~235--249.

\bibitem{Bourgain1}
\leavevmode\vrule height 2pt depth -1.6pt width 23pt, {\em On separable
  {B}anach spaces, universal for all separable reflexive spaces}, Proc. Amer.
  Math. Soc., 79 (1980), pp.~241--246.

\bibitem{CGK}
{\sc F.~Cabello~S\'{a}nchez, J.~Garbuli\'{n}ska-W\c{e}grzyn, and W.~Kubi\'{s}},
  {\em Quasi-{B}anach spaces of almost universal disposition}, J. Funct. Anal.,
  267 (2014), pp.~744--771.

\bibitem{CDDK21}
{\sc M.~{C{\'u}th}, M.~Dole\v{z}al, M.~{Doucha}, and O.~{Kurka}}, {\em {Polish
  spaces of Banach spaces. Complexity of isometry and isomorphism classes.}},
  arXiv e-prints,  (2021).
\newblock arXiv:2204.06834 [math.FA].

\bibitem{CDK21a}
{\sc M.~Cúth, M.~Doucha, and O.~Kurka}, {\em Complexity of distances: Theory
  of generalized analytic equivalence relations}, arXiv e-prints,  (2018).
\newblock arXiv:1804.11164.

\bibitem{CDK21b}
\leavevmode\vrule height 2pt depth -1.6pt width 23pt, {\em Complexity of
  distances: Reductions of distances between metric and {B}anach spaces},
  Israel J. Math.,  (2021).
\newblock accepted.

\bibitem{dodosKniha}
{\sc P.~Dodos}, {\em Banach spaces and descriptive set theory: selected
  topics}, vol.~1993 of Lecture Notes in Mathematics, Springer-Verlag, Berlin,
  2010.

\bibitem{Gao}
{\sc S.~Gao}, {\em Invariant descriptive set theory}, vol.~293 of Pure and
  Applied Mathematics (Boca Raton), CRC Press, Boca Raton, FL, 2009.

\bibitem{Gh19}
{\sc G.~Ghawadrah}, {\em The descriptive complexity of approximation properties
  in an admissible topology}, Fund. Math.,  (2019).
\newblock accepted.

\bibitem{G17}
{\sc G.~Godefroy}, {\em The complexity of the isomorphism class of some
  {B}anach spaces}, J. Nonlinear Convex Anal., 18 (2017), pp.~231--240.

\bibitem{Godef17}
\leavevmode\vrule height 2pt depth -1.6pt width 23pt, {\em The isomorphism
  classes of {$l_p$} are {B}orel}, Houston J. Math., 43 (2017), pp.~947--951.

\bibitem{GS18}
{\sc G.~Godefroy and J.~Saint-Raymond}, {\em Descriptive complexity of some
  isomorphism classes of {B}anach spaces}, J. Funct. Anal., 275 (2018),
  pp.~1008--1022.

\bibitem{Gri}
{\sc R.~I. Grigorchuk}, {\em Degrees of growth of finitely generated groups and
  the theory of invariant means}, Izv. Akad. Nauk SSSR Ser. Mat., 48 (1984),
  pp.~939--985.

\bibitem{KechrisBook}
{\sc A.~S. Kechris}, {\em Classical descriptive set theory}, vol.~156 of
  Graduate Texts in Mathematics, Springer-Verlag, New York, 1995.

\bibitem{KubisSolecki}
{\sc W.~Kubi\'{s} and S.~Solecki}, {\em A proof of uniqueness of the
  {G}urari\u{\i} space}, Israel J. Math., 195 (2013), pp.~449--456.

\bibitem{Ku16}
{\sc O.~Kurka}, {\em Amalgamations of classes of {B}anach spaces with a
  monotone basis}, Studia Math., 234 (2016), pp.~121--148.

\bibitem{Ku18}
\leavevmode\vrule height 2pt depth -1.6pt width 23pt, {\em Tsirelson-like
  spaces and complexity of classes of {B}anach spaces}, Rev. R. Acad. Cienc.
  Exactas F\'{\i}s. Nat. Ser. A Mat. RACSAM, 112 (2018), pp.~1101--1123.

\bibitem{K19}
\leavevmode\vrule height 2pt depth -1.6pt width 23pt, {\em The isomorphism
  class of {$c_0$} is not {B}orel}, Israel J. Math., 231 (2019), pp.~243--268.

\bibitem{Ver}
{\sc A.~M. Vershik}, {\em The universal {U}ryson space, {G}romov's metric
  triples, and random metrics on the series of natural numbers}, Uspekhi Mat.
  Nauk, 53 (1998), pp.~57--64.

\end{thebibliography}
\end{document}